\newcommand \bel {\begin{equation}\label}
\newcommand \ee {\end{equation}}
\newcommand \be {\begin{equation}}
\newcommand \RR {\mathbb R}
\newcommand \HH {\mathbb H}
\newcommand \ZZ {\mathbb Z}
\newcommand \CC {\mathbb C}
\newcommand \LL {\mathbb L}
\newcommand \PP {\mathbb P}
\newcommand \NN {\mathbb N}
\newcommand \del \partial
\newcommand \Ccal {\mathcal C}
\newcommand \Rcal {\mathcal R}
\newcommand \Ocal {\mathcal O}
\newcommand \Lcal {\mathcal L}
\newcommand \Jcal {\mathcal J}
\newcommand \Bcal {\mathcal B}
\newcommand \Fcal {\mathcal F}
\newcommand \bei {\begin{itemize}}
\newcommand \eei {\end{itemize}}
\def \eps {\varepsilon}
\newtheorem{theorem}{\color{black}\indent Theorem}[section]
\newtheorem{lemma}{\color{black}\indent Lemma}[section]
\newtheorem{proposition}{\color{black}\indent Proposition}[section]
\newtheorem{remark}{\color{black}\indent Remark}[section]
\begin{document}
\large
\title{Asymptotic stability of explicite infinite energy blowup solutions for three dimensional incompressible Magnetohydrodynamics equations}
\author{
{\sc Weiping Yan}
\thanks{School of Mathematics, Xiamen University, Xiamen 361000, P.R. China. \textbf{Corresponding author}. Email: yanwp@xmu.edu.cn}
}

\maketitle

\begin{abstract}
This paper is denoted to the study of dynamical behavior near explicit finite time blowup solutions for three dimensional incompressible Magnetohydrodynamics (MHD) equations.
More precisely, we find a family of explicit finite time blowup solutions admitted smooth initial data and infinite energy in whole space $\RR^3$. After that,
we prove asymptotic stability of those explicit finite time blowup solutions for $3$D incompressible Magnetohydrodynamics equations in a smooth bounded domain with free surface
$$
\Omega_{t}:=\Big\{(t,x_1,x_2,x_3):0\leq x_i\leq\sqrt{\overline{T}^*-t},\quad t\in(0,\overline{T}^*),\quad i=1,2,3\Big\},
$$
where $\overline{T}^*$ denotes the blowup time.
This means we construct a family of \textbf{stable} blowup solutions for $3$D incompressible Magnetohydrodynamics equations with smooth initial data in $\Omega_t$.
\end{abstract}

\tableofcontents


\section{Introduction and main results}
\setcounter{equation}{0}

The incompressible Magnetohydrodynamics equations (MHD) describes the dynamics of electrically conducting fluids arising from plasmas or some other physical phenomena.
In the present paper, we are interested in the stable blowup phenomena of smooth solutions to the three dimensional MHD equations
\bel{E1-1}
\aligned
&\del_t\textbf{v}+\textbf{v}\cdot\nabla \textbf{v}+\nabla P=\nu\triangle \textbf{v}+(\nabla\times\textbf{H})\times\textbf{H},\\
&\del_t\textbf{H}=\mu\triangle\textbf{H}+\nabla\times(\textbf{v}\times\textbf{B}),\\
&\nabla\cdot\textbf{v}=0,\quad \nabla\cdot\textbf{H}=0,
\endaligned
\ee
where $(t,x)\in\RR\times\RR^3$, $\textbf{v}$ denotes the $3$D velocity field of the fluid, $P$ stands for the pressure in the fluid, $\textbf{H}$ is the the magnetic field,
$\nu\geq0$ and $\mu\geq0$ denote the viscosity constant and resistivity constant, respectively.
The divergence free condition in second equations of (\ref{E1-1}) guarantees the incompressibility of the fluid.
In particularly, when $\nu=\mu=0$, equations (\ref{E1-1}) is called ideal incompressible MHD; When $\mu>0$, equations (\ref{E1-1}) is called resistive incompressible MHD.

Assume that $\nu>0$ and $\mu>0$.
It is easy to check that solutions of $3$D incompressible MHD equations (\ref{E1-1}) admits the scaling invariant property, that is,
let $(\textbf{v},\textbf{H},P)$ be a solution of (\ref{E1-1}),  then for any constant $\lambda>0$, the functions
$$
\aligned
&\textbf{v}_{\lambda,\alpha}(t,x)=\lambda\textbf{v}(\lambda^2t,\lambda x),\\
&\textbf{H}_{\lambda,\alpha}(t,x)=\lambda\textbf{H}(\lambda^2t,\lambda x),\\
&P_{\lambda,\alpha}(t,x)=\lambda^2P(\lambda^2t,\lambda x),
\endaligned
$$
are also solutions of $3$D incompressible MHD equations (\ref{E1-1}).
Here the initial data $(\textbf{v}_0(x),\textbf{H}_0(x))$ is changed into $(\lambda\textbf{v}_0(\lambda x),\lambda\textbf{H}_0(\lambda x))$.

The question of finite time singularity/global regularity for three dimensional incomprsssible Navier-Stokes equations is the most important open problems in mathematical fluid mechanics \cite{F}.
Since the three dimensional incomprsssible MHD equations (\ref{E1-1}) is a combination of the Navier-Stokes equations of fluid dynamics and Maxwell's equations of electromagnetism,
it is also natural important problem for the three dimensional incompressible MHD equations.
Toward the well-posedness theory direction, it is natural to expect the global existence of classical solutions for viscous and resistive MHD equations for small initial data \cite{DL,ST}.
More precisely, Sermange and Temam \cite{ST} established the local well-posedness of classical solutions for fully viscous MHD equations,
in which the global well-posedness is also proved in two dimensions.
Lin-Zhang \cite{LZ} proved that  the global well-posedness of a three dimensional incompressible MHD type equations with smooth initial data that is close to some nontrivial steady state.
After that, a simpler proof was offered by Lin-Zhang \cite{LZ1}.
Recently, Abidi-Zhang \cite{AZ} showed the global well-posedness for the three dimensional MHD equations without the admissible restriction in the Lagrangian coordinate system.
The global stability of Alfv\'{e}n waves \cite{A} has been obtained by He-Xu-Yu \cite{HXY} and Cai-Lei \cite{CL}, meanwhile, those results are related to
the vanishing dissipation limit from a fully dissipative MHD system to an inviscid and non-resistive MHD equations.
Wei and Zhang [25] proved the MHD equations with small viscosity and resistivity coefficients
are globally well-posed if the initial velocity is close to $0$ and the initial magnetic field is close to a homogeneous magnetic field in the weighted H\"{o}lder space,
where the closeness is independent of the dissipation coefficients. Pan-Zhou-Zhu \cite{PZZ} gave the global existence of classical solutions to the three dimensional incompressible viscous MHD equations without magnetic diffusion in three dimensional torus. Chemin-McCormick-Robinson-Rodrigo \cite{CMRR} obtained the local existence of solutions to the viscous, non-resistive MHD equations in $\RR^n$ with $n=2,3$. Li-Tan-Yin \cite{LTY} improved the results in \cite{CMRR} in homogeneous Besov spaces.

For large initial data case, there are some numerical results to approach the singularity of this kind problem \cite{GO}. The Beale-Kato-Majda's blowup criterion for incompressible MHD was obtained in \cite{CKS,CM}.
Chae \cite{Ch} excluded the scenario of the apparition of finite time singularity in the form of self-similar singularities.
Very recently, Yan \cite{Yan3} found one family of \textbf{stable} explicit infinite energy blowup solutions for $3$D incompressible Navier-Stokes equations (\ref{E1-1}) with $x\in\RR^3$.
We remark there may be other kind of explicit infinite energy blowup solutions, but most of them are \textbf{unstable}! For example, we take the velocity
$$
\textbf{v}(t,x)={\textbf{c}\over T-t},~~\textbf{c}~denotes~nonzero~constant~ vector,
$$
and the pressure
$$
P(t,x)={x\over T-t}.
$$
One can check above solution is unstable. \textbf{Assume that the blowup $T=1$, then one can also check the function $\textbf{v}=({1\over 1-t},0,0)^T$ is an unstable solution for three dimensional incompressible Navier-Stokes equations.
This means that it is not a genuine infinite energy blowup solution.}

Toward this direction, our first result show there exist a family of explicit infinite energy blowup solutions to incompressible MHD equations (\ref{E1-1}) with smooth initial data
( \cite{Yan0} is a part of this paper).

\begin{theorem}
Let constant $T^*>0$ be maximal existence time and constants $\nu,\mu\geq0$.
The 3D incompressible MHD equations (\ref{E1-1}) admits a family of explicit finite time blowup solutions with smooth initial data as follows
\bel{E1-7R1}
\aligned
&\overline{\textbf{v}}_{T^*}(t,x)=\Big(\overline{v}_1(t,x),\overline{v}_2(t,x),\overline{v}_3(t,x)\Big)^T,\quad (t,x)\in[0,T^*)\times\RR^3,\\
&\overline{\textbf{H}}_{T^*}(t,x)=\Big(\overline{H}_1(t,x),\overline{H}_2(t,x),\overline{H}_3(t,x)\Big)^T,\quad (t,x)\in[0,T^*)\times\RR^3,
\endaligned
\ee
where
$$
\aligned
&\overline{v}_1(t,x):={ax_1\over T^*-t}+kx_2(T^*-t)^{2a},\\
&\overline{v}_2(t,x):={ax_2\over T^*-t}-kx_1(T^*-t)^{2a},\\
&\overline{v}_3(t,x):=-{2ax_3\over T^*-t},
\endaligned
$$
and
$$
\aligned
&\overline{H}_1(t,x):=\bar{a}x_1+{2\bar{a}kx_2x_3(T^*-t)^{2a+1}\over 4a+1},\\
&\overline{H}_2(t,x):=\bar{a}x_2-{2\bar{a}kx_1x_3(T^*-t)^{2a+1}\over 4a+1},\\
&\overline{H}_3(t,x):=-2\bar{a}x_3,
\endaligned
$$
with the pressure
\bel{XX1-2}
\aligned
\overline{P}(t,x)&={x_1^2+x_2^2\over 2}\Big(k^2(T^*-t)^{4a}-{a(a+1)\over (T^*-t)^2}-{8\bar{a}^2k^2x_3^2(T^*-t)^{2(2a+1)}\over (4a+1)^2}\Big)\\
&\quad+x_3^2\Big({a(1-2a)\over (T^*-t)^2}-{2\bar{a}^2k^2r^2(T^*-t)^{2(2a+1)}\over (4a+1)^2}\Big),
\endaligned
\ee
and the smooth initial data
\bel{E1-8R1}
\aligned
&\overline{\textbf{v}}_{T^*}(0,x)=\Big({ax_1\over T^*}+kx_2(T^*)^{2a},~{ax_2\over T^*}-kx_1(T^*)^{2a},~-{2ax_3\over T^*}\Big)^T,\\
&\overline{\textbf{H}}_{T^*}(0,x)=\Big(\bar{a}x_1+{2\bar{a}kx_2x_3(T^*)^{2a+1}\over 4a+1},\bar{a}x_2-{2\bar{a}kx_1x_3(T^*)^{2a+1}\over 4a+1},-2\bar{a}x_3\Big)^T,
\endaligned
\ee
where constants $k,\bar{a}\in\RR/\{0\}$ and $a\in\RR/\{-{1\over4},0\}$.
\end{theorem}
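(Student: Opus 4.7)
The plan is to verify the claimed formulas directly by substitution into system (\ref{E1-1}), exploiting the polynomial structure of the ansatz. Since $\overline{\textbf{v}}$ is linear in $x$ and $\overline{\textbf{H}}$ equals a linear term plus a bilinear correction, every pure second spatial derivative either vanishes or is immediate; in particular $\triangle\overline{\textbf{v}}\equiv 0$ and $\triangle\overline{\textbf{H}}\equiv 0$, so the viscous and resistive terms drop out regardless of $\nu,\mu\ge 0$. The divergence-free conditions are then trivial: for both fields, the trace of the linear (strain) part vanishes, and each bilinear correction is structured so that its component depends only on variables other than the one with respect to which it is differentiated.

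Next I would verify the momentum equation by decomposing $\overline{\textbf{v}}=\mathbf{S}+\mathbf{R}$ into a hyperbolic strain $\mathbf{S}=(ax_1,ax_2,-2ax_3)/(T^*-t)$ and a rigid rotation $\mathbf{R}=(kx_2,-kx_1,0)(T^*-t)^{2a}$. A direct computation shows that $\partial_t\overline{\textbf{v}}+(\overline{\textbf{v}}\cdot\nabla)\overline{\textbf{v}}$ produces only a linear-in-$x$ field, because the $\partial_t\mathbf{R}$ contribution cancels against $\mathbf{S}\cdot\nabla\mathbf{R}+\mathbf{R}\cdot\nabla\mathbf{S}$, and the self-interaction $\mathbf{R}\cdot\nabla\mathbf{R}$ reduces to a centripetal term $-k^2(T^*-t)^{4a}(x_1,x_2,0)$. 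I would then compute $\nabla\times\overline{\textbf{H}}=\tfrac{2\bar a k(T^*-t)^{2a+1}}{4a+1}(x_1,x_2,-2x_3)$, which is conveniently parallel to the strain direction of $\overline{\textbf{H}}$, and form the Lorentz force $(\nabla\times\overline{\textbf{H}})\times\overline{\textbf{H}}$. This yields cubic monomials of type $x_i x_3^2$ and $x_3(x_1^2+x_2^2)$, together with bilinear pieces that cancel among themselves. The pressure $\overline{P}$ in (\ref{XX1-2}) is then chosen so that $\nabla\overline{P}$ absorbs both the linear-in-$x$ remainder (via the $\tfrac{r^2}{2}$ part and the $x_3^2$ part) and the cubic Lorentz residue (via the $r^2 x_3^2$ cross term).

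For the induction equation I would compute $\overline{\textbf{v}}\times\overline{\textbf{H}}$ componentwise and then take its curl. The time derivative $\partial_t\overline{\textbf{H}}$ sees only the bilinear correction and produces the factor $(2a+1)(T^*-t)^{2a}$ combined with $\tfrac{2\bar a k}{4a+1}$, which simplifies to $2\bar a k(T^*-t)^{2a}$. On the right-hand side, $\nabla\times(\overline{\textbf{v}}\times\overline{\textbf{H}})$ receives contributions from the strain $\mathbf{S}$ stretching the linear part of $\overline{\textbf{H}}$, from the rotation $\mathbf{R}$ advecting that same linear part, and from interactions of $\overline{\textbf{v}}$ with the bilinear correction of $\overline{\textbf{H}}$. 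The denominator $4a+1$ in the correction is the unique choice that makes these contributions telescope and match the left-hand side.

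The main obstacle I expect is algebraic bookkeeping: several monomial types (constants, linear in $x$, cubic in $x$) appear in multiple places simultaneously, and each coefficient must be matched. Grouping terms by monomial degree and by which of the constants $a,\bar a,k$ they multiply reduces the verification to a small collection of scalar identities in $(a,\bar a,k)$. The parameter restrictions $k\ne 0,\bar a\ne 0$ and $a\ne 0$ guarantee that the solution is genuinely nontrivial and that the fields blow up as $t\to T^*$, while $a\ne -\tfrac14$ is required for the denominator $4a+1$ appearing in both $\overline{\textbf{H}}$ and $\overline{P}$ to be well-defined.
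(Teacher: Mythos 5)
You take the right general approach---direct substitution into (\ref{E1-1}) with the strain--rotation splitting $\overline{\textbf{v}}=\mathbf{S}+\mathbf{R}$---and this is essentially the paper's own computation transplanted from cylindrical to Cartesian coordinates. Your intermediate claims are correct: $\partial_t\mathbf{R}+\mathbf{S}\cdot\nabla\mathbf{R}+\mathbf{R}\cdot\nabla\mathbf{S}=0$, $\mathbf{R}\cdot\nabla\mathbf{R}=-k^2(T^*-t)^{4a}(x_1,x_2,0)$, $\nabla\times\overline{\textbf{H}}=c\,(x_1,x_2,-2x_3)$ with $c=\frac{2\bar ak(T^*-t)^{2a+1}}{4a+1}$, and the bilinear pieces of the Lorentz force do cancel. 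The gap is that your two decisive matchings are asserted rather than computed, and both fail. The cubic Lorentz residue is exactly
\[
(\nabla\times\overline{\textbf{H}})\times\overline{\textbf{H}}=c^2\bigl(-2x_1x_3^2,\,-2x_2x_3^2,\,-(x_1^2+x_2^2)x_3\bigr),
\]
whose curl equals $2c^2(x_2x_3,-x_1x_3,0)\neq0$, while $\partial_t\overline{\textbf{v}}+\overline{\textbf{v}}\cdot\nabla\overline{\textbf{v}}$ is a gradient; hence no scalar $\overline{P}$ can close the momentum equation. Concretely, the gradient of any multiple of $r^2x_3^2$ is proportional to $(x_1x_3^2,x_2x_3^2,r^2x_3)$ with equal weights and cannot reproduce the $(-2,-2,-1)$ pattern above; the stated $\overline{P}$ in (\ref{XX1-2}) contributes $-3c^2(x_1x_3^2,x_2x_3^2,r^2x_3)$ from these cross terms. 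Likewise, for the induction equation one computes
\[
\overline{\textbf{H}}\cdot\nabla\overline{\textbf{v}}-\overline{\textbf{v}}\cdot\nabla\overline{\textbf{H}}=\frac{2a}{T^*-t}\,c\,(x_2x_3,-x_1x_3,0),\qquad
\partial_t\overline{\textbf{H}}=-\frac{2a+1}{T^*-t}\,c\,(x_2x_3,-x_1x_3,0),
\]
so matching forces $-(2a+1)=2a$, i.e.\ $4a+1=0$, which is precisely the excluded parameter value; no coefficient in front of $x_2x_3$, $x_1x_3$ in $\overline{H}_1,\overline{H}_2$ makes the contributions ``telescope,'' contrary to your claim that $4a+1$ is the unique choice that does.

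These defects are not ones you introduced: the paper's own derivation has the same two problems (its displayed formulas for $\partial_r\bar{P}$ and $\partial_z\bar{P}$ have unequal mixed partials, $\partial_z\partial_r\bar{P}=-2c^2rz\neq0=\partial_r\partial_z\bar{P}$, and its reduced equation (\ref{X1-2rr3}) carries a source $2k(T^*-t)^{2a}H^r$, whereas the terms $\frac{1}{r}H^rv^{\theta}$ and $H^r\partial_rv^{\theta}$ in (\ref{X1-2R3}) actually cancel for $v^{\theta}=kr(T^*-t)^{2a}$). But as a proof plan yours cannot be completed as written: carrying out your own program honestly shows that the displayed $(\overline{\textbf{v}},\overline{\textbf{H}},\overline{P})$ satisfies (\ref{E1-1}) only in the degenerate case $k\bar a=0$, where the bilinear correction vanishes and $\overline{\textbf{H}}$ is curl-free. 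You would need either to restrict to that case or to repair the ansatz for $\overline{\textbf{H}}$ before the verification can go through.
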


\begin{remark}
It follows from (\ref{E1-7R1}) that
$$
\aligned
&\nabla \overline{v}_1(t,x)=\Big({a\over T^*-t},~k(T^*-t)^{2a},~0\Big)^T,\\
&\nabla \overline{v}_2(t,x)=\Big(-k(T^*-t)^{2a},~{a\over T^*-t},~0\Big)^T,\\
&\nabla \overline{v}_3(t,x)=\Big(0,~0,~-{2a\over T^*-t}\Big)^T,\\
&\nabla \overline{H}_1(t,x)=\Big(\bar{a},{2\bar{a}kx_3(T^*-t)^{2a+1}\over 4a+1},{2\bar{a}kx_2(T^*-t)^{2a+1}\over 4a+1}\Big)^T,\\
&\nabla \overline{H}_2(t,x)=\Big(-{2\bar{a}kx_3(T^*-t)^{2a+1}\over 4a+1},\bar{a},-{2\bar{a}kx_1(T^*-t)^{2a+1}\over 4a+1}\Big)^T,\\
&\nabla \overline{H}_3(t,x)=\Big(0,0,-2\bar{a}\Big)^T,
\endaligned
$$
which means that
$$
div(\overline{v}_i)|_{x=x_0}=\infty,\quad as\quad t\rightarrow (T^*)^-,
$$
and for $a<-{1\over 2}$, there is
$$
div(\overline{H}_i)|_{x=x_0}=\infty,\quad as\quad t\rightarrow (T^*)^-,
$$
for a fixed point $x_0\in\RR^3$. Here one can see the initial data is smooth from (\ref{E1-8R1}). But the initial data goes to infinity as $x\rightarrow\infty$.

\end{remark}

On one hand, it is easy to see the blowup phenomenon of $3$D incompressible MHD (\ref{E1-1}) can only take place in the velocity field of the fluid $\textbf{v}$, but no blowup for the magnetic field $\textbf{H}$. Moreover, the blowup solutions (\ref{E1-7R1}) independent of viscosity constant $\nu$ and resistivity constant $\mu$, so our results also hold for both $3$D ideal incompressible MHD and resistive incompressible MHD. On the other hand, if the magnetic field $\textbf{H}\equiv0$, equations (\ref{E1-1}) is reduced into $3$D incompressible Navier-Stokes equations. Then corresponding explicit blowup solutions given in (\ref{E1-7R1}) are also explicit stable blowup solutions for $3$D incompressible Navier-Stokes equations \cite{Yan3}. For the velocity field of the fluid $\textbf{v}$, it follows from (\ref{E1-7R1}) that there is self-smilar singularity in $x_3$ direction, that is, $-{2ax_3\over T^*-t}$ for $a\in\RR/\{0\}$.
Moreover, by (\ref{E1-7R1}), there are not only blowup for velocity field of the fluid $\textbf{v}$, but also blowup for the magnetic field $\textbf{H}$ with constant $a<-{1\over 2}$ as $t\rightarrow (T^*)^-$.

Let the smooth bounded domain be the form
\bel{B1}
\Omega_{t}:=\Big\{(t,x_1,x_2,x_3):0\leq x_i\leq\sqrt{\overline{T}^*-t},\quad t\in(0,\overline{T}^*),\quad i=1,2,3\Big\},
\ee
which is a free boundary surface. The second result is devoted to the study of nonlinear stable of singular solutions (\ref{E1-7R1}) in this smooth bounded domain.
We set
$$
\aligned
&\textbf{v}(t,x)=\textbf{w}(t,x)+\overline{\textbf{v}}_{\overline{T}^*}(t,x),\\
&\textbf{H}(t,x)=\textbf{b}(t,x)+\overline{\textbf{H}}_{\overline{T}^*}(t,x),\\
&P(t,x)=p(t,x)+\overline{P}(t,x),
\endaligned
$$
then substituting above equalities into the three dimensional MHD system (\ref{E1-1}) to get the perturbation system as follows
$$
\aligned
\textbf{w}_t-\nu\triangle\textbf{w}&=\nabla p-\textbf{w}\cdot\nabla\overline{\textbf{v}}_{\overline{T}^*}-(\overline{\textbf{v}}_{\overline{T}^*}+\textbf{w})\cdot\nabla\textbf{w}
+(\overline{\textbf{H}}_{\overline{T}^*}+\textbf{b})\cdot\nabla\textbf{b}\\
&\quad\quad+\textbf{b}\cdot\nabla\overline{\textbf{H}}_{\overline{T}^*}-\nabla(\overline{\textbf{H}}_{\overline{T}^*}\cdot\textbf{b})-\nabla({|\textbf{b}|^2\over 2}),\\
\textbf{b}_t-\mu\triangle\textbf{b}&=(\overline{\textbf{H}}_{\overline{T}^*}+\textbf{b})\cdot\nabla\textbf{w}+\textbf{b}\cdot\nabla\overline{\textbf{v}}_{\overline{T}^*}
-\overline{\textbf{v}}_{\overline{T}^*}\cdot\nabla\textbf{b}-\textbf{w}\cdot\nabla(\overline{\textbf{H}}_{\overline{T}^*}+\textbf{b}),\\
&\nabla\cdot\textbf{w}=0,\quad \nabla\cdot\textbf{b}=0.
\endaligned
$$

Obviously, there are singular coefficients like ${1\over\overline{T}^*-t}$ in above perturbation system. It causes large difficulty to solve it directly. 
In order to overcome this case, we introduce the self-similarity coordinates
\bel{AEA1-1}
\aligned
&\tau=-\ln(\overline{T}^*-t)+\ln \overline{T}^*,\\
&y={x\over \sqrt{\overline{T}^*-t}},
\endaligned
\ee
where one can see the blowup time $\overline{T}^*>0$ has been transformed into $+\infty$.
Thus the smooth bounded domain (\ref{B1}) is transformed into a fixed domain
$$
\overline{\Omega}:=\{(\tau,y):0<\tau<+\infty,\quad y\in\Omega:=([0,1])^3\}.
$$
So the local existence of perturbation system is equivalent to prove the global existence of perturbation system. 

In fact, this kind of domain has been widely encountered when one studied the stabliliy of self-similar blowup solutions for wave equations (e.g. see \cite{D1,D2}).
The main reason is the propagation of singularity inside the light cone for wave equations. Since the explicit blowup solutions given in (\ref{E1-7R1}) can also propagate inside the light cone,
we study nonlinear stability of blowup solutions (\ref{E1-7R1}) in the free boundary surface (\ref{B1}).

We supplement the MHD system (\ref{E1-1}) with initial data
$$
\textbf{v}(0,x)=\textbf{v}_0(x),\quad \textbf{H}(0,x)=\textbf{H}_0(x).
$$
and boundary condition
\bel{Ay1-1}
\aligned
&\Big(\textbf{v}(t,x)-\overline{\textbf{v}}_{\overline{T}^*}(t,x)\Big)|_{x\in\del\Omega_t}=\textbf{w}(t,x)|_{x\in\del\Omega_t}=0,\\
&\Big(\textbf{H}(t,x)-\overline{\textbf{H}}_{\overline{T}^*}(t,x)\Big)|_{x\in\del\Omega_t}=\textbf{b}(t,x)|_{x\in\del\Omega_t}=0.
\endaligned
\ee

We now state the asymptotic stability of infinite energy blowup solutions (\ref{E1-7R1}).

\begin{theorem}
Let viscosity constant $\nu$ and resistivity constant $\mu$ be sufficient big, constants $a\in(0,{1\over2}]$, $\bar{a},k\in(0,1]$, a fixed integer $s\geq2$.
The family of explicit finite time blowup solutions (\ref{E1-7R1}) is asymptotic stability in $\Omega_t$, i.e.
for a sufficient small $\eps>0$, if
$$
\|\textbf{v}_0(x)-\overline{\textbf{v}}_{T^*}(0,x)\|_{H^s(\Omega_0)}+\|\textbf{H}_0(x)-\overline{\textbf{H}}_{T^*}(0,x)\|_{H^s(\Omega_0)}<\eps,
$$
then the three dimensional incompressible MHD equations (\ref{E1-1}) admits a local solution $(\textbf{v}(t,x),\textbf{H}(t,x))$ such that
$$
\aligned
&\textbf{v}(t,x)=\overline{\textbf{v}}_{\overline{T}^*}(t,x)+\textbf{w}(t,x),\\
&\textbf{H}(t,x)=\overline{\textbf{H}}_{\overline{T}^*}(t,x)+\textbf{b}(t,x),
\endaligned
$$
with
$$
\aligned
\|\textbf{w}(t,x)\|_{H^s(\Omega_t)}^2+\|\textbf{b}(t,x)\|_{H^s(\Omega_t)}^2
\lesssim(\overline{T}^*-t)^{C_{\eps,a,\bar{a},k,\nu,\mu}},
\qquad \forall (t,x)\in(0,\overline{T}^*)\times\Omega_t,
\endaligned
$$
with the boundary condition
$$
\textbf{w}(t,x)|_{\del\Omega_t}=0,\quad\textbf{b}(t,x)|_{\del\Omega_t}=0,
$$
where $C_{\eps,a,\bar{a},k,\nu,\mu}$ is a positive constant depending on constants $\eps,a,\bar{a},k,\nu,\mu$, and
$H^s(\Omega_t)$ denotes the usual Sobolev space.

Moreover, the blowup time $\overline{T}^*$ is contained in $[T^*-\delta,T^*+\delta]$ for a positive constant $\delta\ll1$.
\end{theorem}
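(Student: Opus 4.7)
My strategy is to exploit the self-similar coordinates (\ref{AEA1-1}), which map the shrinking cube $\Omega_t$ onto the fixed cube $\Omega=[0,1]^3$ and the blowup time $t\uparrow\overline{T}^*$ onto $\tau\to+\infty$; the theorem then becomes equivalent to a global-in-$\tau$ existence and exponential decay statement on $(0,\infty)\times\Omega$ with Dirichlet data (\ref{Ay1-1}). Setting $\textbf{W}(\tau,y):=\textbf{w}(t,x)$, $\textbf{B}(\tau,y):=\textbf{b}(t,x)$, $\Pi(\tau,y):=(\overline{T}^*-t)^{1/2}p(t,x)$, and using $\del_t=(\overline{T}^*-t)^{-1}(\del_\tau+\tfrac{1}{2}y\cdot\nabla_y)$, $\nabla_x=(\overline{T}^*-t)^{-1/2}\nabla_y$, and multiplying the perturbation system through by $(\overline{T}^*-t)$, one checks that every linear coefficient coming from the velocity profile $\overline{\textbf{v}}_{\overline{T}^*}$ is either $\tau$-independent with $y$-bounded coefficients on $\Omega$ (the stretching factor $a x_i/(\overline{T}^*-t)$ becomes $a y_i$, and $\nabla_x\overline{v}$ contributes a constant multiple of $a$) or exponentially decaying in $\tau$ (the rotational factor $kx_j(\overline{T}^*-t)^{2a}$, and every contribution coming from the magnetic profile, since $\overline{\textbf{H}}_{\overline{T}^*}=O(\sqrt{\overline{T}^*-t})$ on $\Omega_t$), while the quadratic nonlinearities pick up an additional factor $(\overline{T}^*-t)^{1/2}=\sqrt{\overline{T}^*}\,e^{-\tau/2}$. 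The transformed system takes the schematic form
\bel{plan1}
\aligned
&\del_\tau\textbf{W}-\nu\Delta_y\textbf{W}+\mathcal{T}_a\textbf{W}+e^{-c\tau}\mathcal{L}_1(\tau,y)(\textbf{W},\textbf{B})=-\nabla_y\Pi+e^{-\tau/2}\mathcal{N}_1(\textbf{W},\textbf{B}),\\
&\del_\tau\textbf{B}-\mu\Delta_y\textbf{B}+\mathcal{T}_a\textbf{B}+e^{-c\tau}\mathcal{L}_2(\tau,y)(\textbf{W},\textbf{B})=e^{-\tau/2}\mathcal{N}_2(\textbf{W},\textbf{B}),
\endaligned
\ee
where $\mathcal{T}_a=(\tfrac{1}{2}y+a\widetilde{y})\cdot\nabla_y+a\mathcal{A}$ with $\widetilde{y}=(y_1,y_2,-2y_3)$ and $\mathcal{A}=\mathrm{diag}(1,1,-2)$, the operators $\mathcal{L}_j$ are bounded on $\Omega$ uniformly in $\tau\geq 0$, and the constraints $\nabla_y\cdot\textbf{W}=\nabla_y\cdot\textbf{B}=0$ together with $\textbf{W}|_{\del\Omega}=\textbf{B}|_{\del\Omega}=0$ are inherited from (\ref{E1-1}) and (\ref{Ay1-1}).

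\medskip

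Given (\ref{plan1}), I would construct strong solutions by a Galerkin approximation in the divergence-free Stokes eigenbasis on $\Omega$ and close a priori estimates on
$$
E(\tau):=\|\textbf{W}(\tau)\|_{H^s(\Omega)}^2+\|\textbf{B}(\tau)\|_{H^s(\Omega)}^2,\qquad s\geq 2.
$$
Differentiating (\ref{plan1}) by $\del_y^\alpha$ for $|\alpha|\leq s$, pairing with $\del^\alpha\textbf{W},\del^\alpha\textbf{B}$ in $L^2(\Omega)$ and summing, the pressure drops out via divergence freeness plus the Dirichlet data, the viscosity and resistivity produce $-2\nu\|\nabla\textbf{W}\|_{H^s}^2-2\mu\|\nabla\textbf{B}\|_{H^s}^2$ and hence $-c_P\min(\nu,\mu)E$ by the Poincar\'e inequality on the cube, the commutators $[\del^\alpha,\mathcal{T}_a]$ contribute at most $C(s,a,\bar{a},k)E$, the $\mathcal{L}_j$ contribute $O(e^{-c\tau}E)$, and the Sobolev embedding $H^s(\Omega)\hookrightarrow L^\infty(\Omega)$ bounds the quadratic nonlinearities by $|(\mathcal{N}_j,\del^\alpha\cdot)|\lesssim E^{3/2}$. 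Collecting,
$$
\frac{d}{d\tau}E(\tau)\leq -\gamma E(\tau)+Ce^{-\tau/2}E(\tau)^{3/2},\qquad \gamma:=c_P\min(\nu,\mu)-C(s,a,\bar{a},k)>0
$$
provided $\nu,\mu$ are sufficiently large. A standard continuity/bootstrap argument starting from $E(0)\lesssim\eps^2$ then yields global existence and $E(\tau)\lesssim\eps^2 e^{-\gamma\tau}$. Reverting to $(t,x)$-coordinates via $e^{-\tau}=(\overline{T}^*-t)/\overline{T}^*$ and accounting for the Jacobian factor $(\overline{T}^*-t)^{3/2-|\alpha|}$ in the $H^s(\Omega_t)$-norm produces the stated decay with exponent $C_{\eps,a,\bar{a},k,\nu,\mu}:=\gamma-s+\tfrac{3}{2}$, which is positive once $\min(\nu,\mu)$ is chosen large enough.

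\medskip

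I expect two technical issues to absorb the bulk of the work. First, the pressure estimate on the cube with Dirichlet velocity data requires solving $\Delta_y\Pi=\nabla_y\cdot(\mathrm{RHS})$ with compatible boundary data derived from the normal component of the momentum equation on $\del\Omega$; closing $H^{s+1}(\Omega)$-regularity uniformly in $\tau$ on a domain with corners calls for reflection/symmetrization or piecewise-smooth elliptic theory. Second, the explicit dissipation threshold: since the commutators $[\del_y^\alpha,\mathcal{T}_a]$ at top order $|\alpha|=s$ produce constants depending on both $s$ and $a$, the hypothesis ``$\nu,\mu$ sufficiently large'' must in practice be quantified as $\min(\nu,\mu)\geq K(s,a,\bar{a},k)/c_P$, which is where the restriction $a\in(0,\tfrac{1}{2}]$, $\bar{a},k\in(0,1]$ enters to keep the bad constant finite and well-controlled. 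Finally, the blowup time $\overline{T}^*$ is not prescribed but is pinned down a posteriori by an implicit function argument: the map $\overline{T}^*\mapsto(\overline{\textbf{v}}_{\overline{T}^*}(0,\cdot),\overline{\textbf{H}}_{\overline{T}^*}(0,\cdot))$ is smooth with nondegenerate linearization at $T^*$, so for each $\eps$-close initial datum there is a unique $\overline{T}^*\in[T^*-\delta,T^*+\delta]$ such that the corresponding $(\textbf{W}(0,\cdot),\textbf{B}(0,\cdot))$ lies in the $O(\eps^2)$ smallness region where the global decay analysis applies.
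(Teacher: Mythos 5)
Your proposal shares the paper's linear backbone --- the self-similar change of variables (\ref{AEA1-1}) that maps $\Omega_t$ onto the fixed cube $[0,1]^3$ and the blowup time onto $\tau=+\infty$, followed by $H^s$ energy estimates in which the Poincar\'e inequality lets large $\nu,\mu$ dominate the $O(a)$ stretching terms and the exponentially decaying coefficients (this is exactly Lemmas 3.1--3.3) --- but it closes the nonlinear problem by a genuinely different route. The paper freezes coefficients at the previous iterate, solves the resulting linear system (Propositions 3.1--3.2), and runs a Nash--Moser scheme with smoothing operators $\Pi_{N_m}$ and quadratic error estimates (Lemmas 4.1--4.3, Proposition 4.1); you instead propose a single a priori differential inequality $E'\le -\gamma E+Ce^{-\tau/2}E^{3/2}$ for the full nonlinear system, together with a Galerkin construction and a bootstrap. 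Since the perturbation system is semilinear parabolic and the quadratic terms lose only one derivative --- absorbed either by the dissipation or by the standard cancellations $\int_\Omega\textbf{W}\cdot\nabla(|\del^\alpha\textbf{W}|^2)=0$ and $\int_\Omega\textbf{B}\cdot\nabla(\del^\alpha\textbf{B}\cdot\del^\alpha\textbf{W})=0$ --- there is no loss of derivatives to compensate, and your direct argument is the more economical one; the paper's own tame estimate (\ref{E4-9R1}) shows Nash--Moser is not structurally forced here. Your implicit-function determination of $\overline{T}^*$ is likewise a cleaner substitute for the continuity-in-parameter argument at the end of Proposition 4.1. Two caveats, which you partly flag yourself. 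First, the pressure does not simply ``drop out'' at orders $|\alpha|\ge1$: integration by parts leaves the boundary term $\int_{\del\Omega}\del^\alpha\Pi\,(\del^\alpha\textbf{W}\cdot n)\,dS$, which Dirichlet data alone do not kill, and the drift term $y\cdot\nabla_y$ and the corners of the cube create analogous boundary contributions; the paper's higher-order estimates ((\ref{E3-10}), (\ref{E3-22}) and the integrations by parts in Lemmas 3.1--3.2) silently discard the same terms, so this is a shared rather than a new gap, but in your write-up it must be resolved before the scheme is complete. Second, the positivity of your final exponent $\gamma-s+\tfrac32$ is precisely where the largeness of $\nu,\mu$ relative to $s$ enters, matching the sign conditions the paper imposes after (\ref{AAE3-23}); this dependence should be stated in the hypotheses rather than discovered at the end.
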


\begin{remark}
Above stability result also tells us if we perturbe the initial data $(\overline{\textbf{v}}_{T^*}(0,x),\overline{\textbf{H}}_{T^*}(0,x))^T$, then we can construct blowup solutions 
$$
\aligned
&\textbf{v}(t,x)=\overline{\textbf{v}}_{\overline{T}^*}(t,x)+\Ocal(\eps),\\
&\textbf{H}(t,x)=\overline{\textbf{H}}_{\overline{T}^*}(t,x)+\Ocal(\eps),
\endaligned
$$ 
but with the blowup time $\overline{T}^*$ contained in the interval $[T^*-\delta,T^*+\delta]$. So the blowup time maybe shift.
A similar phenomenon has been proven in other kind of evolution equations ( for example, nonlinear wave equation \cite{D1}).
\end{remark}

\begin{remark}
For the three dimensional incompressible Navier-Stokes equations, we notice that the pressure $P$ is uniquely determined by the formula
$$
P(t,x)=-\triangle^{-1}\sum_{i,j=1}^3{\del v_i\over\del x_j}{\del v_j\over\del x_i}.
$$
Hence the nonlinear term $\textbf{v}\cdot\nabla \textbf{v}$ is important for getting the pressure.

In fact, if we consider a simple model
$$
\aligned
&\textbf{v}_t+\nabla P=0,\\
&\nabla\cdot\textbf{v}=0,
\endaligned
$$
then we take both sides with divergence free condition to the equation, thus we can not get any information on the pressure $P$ with the velocity field $\textbf{v}$. 
So this means that the pressure can not be unique determined.
\end{remark}


\paragraph{Notations.}

Thoughout this paper, we denote the usual norm of $\LL^2(\Omega)$ and Sobolev space $\HH^s(\Omega)$ by $\|\cdot\|_{\LL^2}$ and $\|\cdot\|_{\HH^s}$, respectively.
The norm of $L^2$ space $L^2(\Omega):=(\LL^2(\Omega))^3$ and Sobolev space $H^s(\Omega):=(\HH^s(\Omega))^3$ are denoted by $\|\cdot\|_{L^2}$ and  $\|\cdot\|_{H^s}$, repestively. The symbol $a\lesssim b$ means that there exists a positive constant $C$ such that $a\leq Cb$. $(a,b,c)^T$ denotes the column vector in $\Omega$.
The space $\LL^2((0,\overline{T}^*);H^s(\Omega))$ is equipped with the norm
$$
\|u\|^2_{\LL^2((0,\overline{T}^*);H^s(\Omega))}:=\int_0^{\overline{T}^*}\|u(t,\cdot)\|^2_{H^s}dt.
$$
We also introduce the function space $\Ccal^s_{1}:=\bigcap_{i= 0}^1\CC^i((0,\overline{T}^*);H^{s-i}(\Omega))$ with the norm
$$
\|u\|^2_{\Ccal^s_{1}}:=\sup_{t\in(0,\overline{T}^*)}\sum_{i= 0}^1\|\partial^{i}_{t}u\|^2_{H^{s-i}}.
$$
The letter $C$ with subscripts to denote dependencies stands for a positive constant that might change its value at each occurrence.

The organization of this paper is as follows. In section 2, we give the details of finding explicit finite time blowup solutions of $3$D incompressible MHD equations (\ref{E1-1}).
In section 3, we study the local well-posedness for the linearized $3$D incompressible MHD equations (\ref{E1-1}) around explicit finite time blowup solutions with small initial data.
This last section will prove asymptotic stability of those finite time blowup solutions by construction of Nash-Moser iteration scheme.


\section{Explicit finite time blowup solutions with infinite energy}
\setcounter{equation}{0}

In this section, we show how to find a family of explicit finite time blowup solutions of 3D incompressible MHD equations (\ref{E1-1}), which contains the result given in \cite{Yan0}.
We first recall a result on the existence of explicit blowup axisymmetric solutions for $3$D incompressible Navier-Stokes equations \cite{Yan3}.
Let $\textbf{e}_r$, $\textbf{e}_{\theta}$ and $\textbf{e}_z$ be the cylindrical coordinate
system,
\bel{E1-0}
\aligned
&\textbf{e}_r=({x_1\over r},{x_2\over r},0)^T,\\
&\textbf{e}_{\theta}=({x_2\over r},-{x_1\over r},0)^T,\\
&\textbf{e}_z=(0,0,1)^T,
\endaligned
\ee
where $r=\sqrt{x_1^2+x_2^2}$ and $z=x_3$.

The 3D incompressible Navier-Stokes equations admits a family of explicit blowup axisymmetric solutions:
\bel{E1-6r1}
\textbf{v}(t,x)=v^r(t,r,z)\textbf{e}_r+v^{\theta}(t,r,z)\textbf{e}_{\theta}+v^z(t,r,z)\textbf{e}_z,\quad (t,x)\in[0,T^*)\times\RR^3,
\ee
where
$$
\aligned
&v^r(t,r,z)={ar\over T^*-t},\\
&v^{\theta}(t,r,z)= kr(T^*-t)^{2a},\\
&v^z(t,r,z)=-{2az\over T^*-t},
\endaligned
$$
where constants $a,k\in\RR/\{0\}$.

We now derive the $3$D incompressible MHD equations (\ref{E1-1}) with axisymmetric velocity field in the cylindrical coordinate (e.g. see \cite{Lei}).
The $3$D velocity field $\textbf{v}(t,x)$ and magnetic field $\textbf{H}(t,x)$ are called axisymmetric if they can be written as
$$
\aligned
&\textbf{v}(t,x)=v^r(t,r,z)\textbf{e}_r+v^{\theta}(t,r,z)\textbf{e}_{\theta}+v^z(t,r,z)\textbf{e}_z,\\
&\textbf{H}(t,x)=H^r(t,r,z)\textbf{e}_r+H^{\theta}(t,r,z)\textbf{e}_{\theta}+H^z(t,r,z)\textbf{e}_z,\\
&P(t,x)=P(t,r,z),
\endaligned
$$
where $(v^r,v^{\theta},v^z)$, $(H^r,H^{\theta},H^z)$ and $P(t,r,z)$ do not depend on the $\theta$ coordinate.

Note that the Lorentz force term
$$
(\nabla\times\textbf{H})\times\textbf{H}=\textbf{H}\cdot\nabla\textbf{H}-\nabla{|\textbf{H}|^2\over 2}.
$$
Then $3$D MHD equations (\ref{E1-1}) with axisymmetric velocity field in the cylindrical coordinates can be reduced into a system as follows
\bel{X1-2}
\del_tv^r+v^r\del_rv^r+v^z\del_zv^r-{1\over r}(v^{\theta})^2+\del_r\bar{P}
=\nu(\triangle-{1\over r^2})v^r+H^r\del_rH^r+H^z\del_zH^r-{1\over r}(H^{\theta})^2,
\ee
\bel{X1-2R0}
\del_tv^{\theta}+v^r\del_rv^{\theta}+v^z\del_zv^{\theta}+{1\over r}v^rv^{\theta}
=\nu(\triangle-{1\over r^2})v^{\theta}+H^r\del_rH^{\theta}+H^z\del_zH^{\theta}+{1\over r}H^{\theta}H^r,
\ee
\bel{X1-2R1}
\del_tv^z+v^r\del_rv^z+v^z\del_zv^z+\del_z\bar{P}=\nu\triangle v^z+H^r\del_rH^z+H^z\del_zH^z,
\ee
\bel{X1-2R2}
\del_tH^r+v^r\del_rH^r+v^z\del_zH^r=\mu(\triangle-{1\over r^2})H^r+H^r\del_rv^r+H^z\del_zv^r,
\ee
\bel{X1-2R3}
\del_tH^{\theta}+v^r\del_rH^{\theta}+v^z\del_zH^{\theta}+{1\over r}H^rv^{\theta}
=\mu(\triangle-{1\over r^2})H^{\theta}+H^r\del_rv^{\theta}+H^z\del_zv^{\theta}+{1\over r}v^rH^{\theta},
\ee
\bel{X1-2R4}
\del_tH^z+v^r\del_rH^z+v^z\del_zH^z=\mu\triangle H^z+H^r\del_rv^z+H^z\del_zv^z,
\ee
where the pressure is given by
\bel{X1-3}
\bar{P}=P+{|\textbf{H}|^2\over 2}.
\ee

The incompressibility condition becomes
\bel{X1-4}
\aligned
&\del_r(rv^r)+\del_z(rv^z)=0,\\
&\del_r(rH^r)+\del_z(rH^z)=0.
\endaligned
\ee

The following result gives a family of explict self-similar blowup solutions for system (\ref{X1-2})-(\ref{X1-3}) with the  incompressibility condition (\ref{X1-4}).

\begin{proposition}
\begin{itemize}
Let $T^*>0$ be a constant. System (\ref{X1-2})-(\ref{X1-3}) with the incompressibility condition (\ref{X1-4}) admits a family of explicit blowup solutions:
\bel{X1-6}
\aligned
&v^r(t,r,z)={ar\over T^*-t},\\
&v^{\theta}(t,r,z)=kr(T^*-t)^{2a},\\
&v^z(t,r,z)=-{2az\over T^*-t},\\
&H^r(t,r,z)=\bar{a}r,\\
&H^{\theta}(t,r,z)={2\bar{a}krz(T^*-t)^{2a+1}\over 4a+1},\\
&H^z(t,r,z)=-2\bar{a}z,
\endaligned
\ee
where constants $\bar{a},k\in\RR/\{0\}$ and $a\in\RR/\{-{1\over 4},0\}$.

\end{itemize}
\end{proposition}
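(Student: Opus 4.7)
The proof is a direct verification. Because every component of the ansatz is a polynomial of low degree in $r,z$ with purely time-dependent coefficients, substituting into (\ref{X1-2})--(\ref{X1-2R4}) together with the incompressibility constraints (\ref{X1-4}) reduces every equation to a polynomial identity in $r$, $z$, $(T^*-t)$ that can be checked by matching coefficients of the finitely many monomials that occur. I would begin with (\ref{X1-4}): $\partial_r(rv^r)+\partial_z(rv^z) = 2ar/(T^*-t) - 2ar/(T^*-t) = 0$, and similarly $\partial_r(rH^r)+\partial_z(rH^z) = 2\bar a r - 2\bar a r = 0$. Next, because $v^r$, $v^\theta$, $H^r$ are proportional to $r$ alone, $v^z$ and $H^z$ to $z$ alone, and $H^\theta$ to $rz$, each of the viscous operators $(\triangle-1/r^2)v^r$, $(\triangle-1/r^2)v^\theta$, $\triangle v^z$, $(\triangle-1/r^2)H^r$, $(\triangle-1/r^2)H^\theta$, $\triangle H^z$ vanishes identically, which also explains why the whole family of solutions is independent of $\nu$ and $\mu$.

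Then I would dispatch the four non-pressure equations (\ref{X1-2R0}), (\ref{X1-2R2}), (\ref{X1-2R3}), (\ref{X1-2R4}) by matching coefficients: (\ref{X1-2R0}) balances at scale $r(T^*-t)^{2a-1}$; (\ref{X1-2R2}) and (\ref{X1-2R4}) give single-scale identities at $r/(T^*-t)$ and $z/(T^*-t)$, respectively; and (\ref{X1-2R3}) separates into independent contributions at the two scales $r(T^*-t)^{2a}$ and $rz(T^*-t)^{2a}$. Requiring simultaneous cancellation at both scales is exactly what pins down both the exponent $2a+1$ and the prefactor $2\bar a k/(4a+1)$ in $H^\theta$; the denominator $4a+1$ is the algebraic origin of the exclusion $a\neq -1/4$, while $a=0$ is excluded in order to keep the ansatz genuinely time-singular and the coefficient of the self-similar part of $v^r$ nontrivial.

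Finally, for the two pressure-containing equations (\ref{X1-2}) and (\ref{X1-2R1}), I would solve them algebraically for $\partial_r\bar P$ and $\partial_z\bar P$ in terms of quantities already known, verify the compatibility condition $\partial_z\partial_r\bar P = \partial_r\partial_z\bar P$, integrate to obtain $\bar P(t,r,z)$ up to a function of $t$, and recover $P$ from (\ref{X1-3}) via $P = \bar P - |\textbf{H}|^2/2$ so that the result agrees with (\ref{XX1-2}) in Theorem 1.1. The main obstacle will be the bookkeeping for (\ref{X1-2R3}): it is the only scalar equation with several different nonlinear contributions that live at two independent scales, and the simultaneous cancellation at both scales is what fixes both the time-exponent and the normalising prefactor of $H^\theta$; everything else in the verification is either trivial (incompressibility, vanishing viscous terms) or a routine scalar coefficient match.
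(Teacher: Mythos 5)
Your overall strategy --- substitute the ansatz into (\ref{X1-2})--(\ref{X1-4}) and match coefficients of the finitely many monomials in $r$, $z$, $(T^*-t)$ --- is exactly the approach of the paper (which phrases it as a derivation via the ansatz $H^\theta=\bar k r^pz^q(T^*-t)^{-\beta}$ rather than a verification, but performs the same computations), and your observations about the two incompressibility conditions, the vanishing of all the viscous terms, and equations (\ref{X1-2R0}), (\ref{X1-2R2}), (\ref{X1-2R4}) are correct. The gap is the one step you single out as the crux and then assert without computing: the ``simultaneous cancellation at two scales'' in (\ref{X1-2R3}) does not occur, and it does not pin down the prefactor $2\bar ak/(4a+1)$. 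Write $H^\theta=\bar k\,rz(T^*-t)^{2a+1}$. The two contributions at the scale $r(T^*-t)^{2a}$, namely $\tfrac1r H^rv^\theta$ on the left and $H^r\partial_rv^\theta$ on the right, are both equal to $\bar a k r(T^*-t)^{2a}$ because $v^\theta/r=\partial_rv^\theta$ for $v^\theta$ linear in $r$; they cancel identically and impose no condition on $H^\theta$. At the scale $rz(T^*-t)^{2a}$ one finds
$$
\mathrm{LHS}-\mathrm{RHS}=\big[-(2a+1)+a-2a-a\big]\,\bar k\,rz(T^*-t)^{2a}=-(4a+1)\,\bar k\,rz(T^*-t)^{2a},
$$
so (\ref{X1-2R3}) forces $(4a+1)\bar k=0$, hence $\bar k=0$ since $a\neq-\tfrac14$: the factor $4a+1$ enters as an obstruction, not as the denominator of a normalising constant. (The same conclusion follows in Cartesian coordinates: the first component of the induction equation reduces to $-(2a+1)\bar k=2a\bar k$.) The paper's own derivation masks this because in (\ref{X1-2rr3}) the two cancelling terms are added rather than subtracted, producing the spurious source $2k(T^*-t)^{2a}H^r$, and because (\ref{E2-5r0}) then balances a monomial proportional to $rz$ against one proportional to $r$.

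Your proposed check $\partial_z\partial_r\bar P=\partial_r\partial_z\bar P$ is a good instinct --- the paper never performs it --- but it fails for the same reason: the centripetal term $-\tfrac1r(H^\theta)^2$ in (\ref{X1-2}) contributes $-\bar k^2rz^2(T^*-t)^{2(2a+1)}$ to $\partial_r\bar P$, so $\partial_z\partial_r\bar P=-2\bar k^2rz(T^*-t)^{2(2a+1)}$, whereas (\ref{X1-2R1}) gives $\partial_z\bar P=2z\big(2\bar a^2+a(1-2a)(T^*-t)^{-2}\big)$ and hence $\partial_r\partial_z\bar P=0$. Both obstructions are proportional to $\bar k$ (respectively $\bar k^2$) and vanish precisely when $H^\theta\equiv0$, in which case the remaining five components of (\ref{X1-6}) do solve the system with a consistent pressure. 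So the verification you outline, carried out honestly, refutes the stated form of $H^\theta$ rather than confirming it; the defect in your write-up is that you announced the outcome of the decisive coefficient match instead of performing it.
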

\begin{proof}
The idea of finding explicit blowup solutions for system (\ref{X1-2})-(\ref{X1-3}) with the incompressibility condition (\ref{X1-4}) comes from \cite{Yan3}. This is based on the observation on the structure of system (\ref{X1-2})-(\ref{X1-3}) and incompressibility condition (\ref{X1-4}). We notice that if the magnetic field $\textbf{H}=0$, equations (\ref{E1-1}) is reduced into $3$D incompressible Navier-Stokes equations, so the explicit blowup solutions (\ref{E1-6r1}) of Navier-Stokes equations should be a part of solutions for the corresponding MHD equations.

We set
\bel{X1-6r1}
\aligned
&v^r(t,r,z)={ar\over T^*-t},\\
&v^{\theta}(t,r,z)=kr(T^*-t)^{2a},\\
&v^z(t,r,z)=-{2az\over T^*-t},
\endaligned
\ee
be a part of solutions for (\ref{X1-2})-(\ref{X1-2R4}), where constants $a,k\in\RR/\{0\}$.

Substituting (\ref{X1-6r1}) into equations (\ref{X1-2R0}) and (\ref{X1-2R2})-(\ref{X1-2R4}), we get
\bel{X1-2rr0}
H^r\del_rH^{\theta}+H^z\del_zH^{\theta}+{1\over r}H^{\theta}H^r=0,
\ee
\bel{X1-2rr2}
\del_tH^r+{ar\over T^*-t}\del_rH^r-{2az\over T^*-t}\del_zH^r=\mu(\triangle-{1\over r^2})H^r+{a\over T^*-t}H^r,
\ee
\bel{X1-2rr3}
\del_tH^{\theta}+{ar\over T^*-t}\del_rH^{\theta}-{2az\over T^*-t}\del_zH^{\theta}+2k(T^*-t)^{2a}H^r
=\mu(\triangle-{1\over r^2})H^{\theta}+{a\over T^*-t}H^{\theta},
\ee
\bel{X1-2rr4}
\del_tH^z+{ar\over T^*-t}\del_rH^z-{2az\over T^*-t}\del_zH^z=\mu\triangle H^z-{2a\over T^*-t}H^z,
\ee

We observe the the structure of incompressibility condition (\ref{X1-4}) on the magnetic field, and we find it is better to set
\bel{E2-1}
\aligned
&H^r(t,r,z)={\bar{a}r\over (T^*-t)^{\alpha}},\\
&H^z(t,r,z)=-{2\bar{a}z\over (T^*-t)^{\alpha}},
\endaligned
\ee
where $\bar{a}\neq0,\alpha$ are two unknown constants.

It is easy to see $H^r(t,r,z)$ and $H^z(t,r,z)$ given in (\ref{E2-1}) satisfies the incompressibility condition (\ref{X1-4}) on the magnetic field.

Note that $(\triangle-{1\over r^2})r=0$.
Substituting the $H^r$ in (\ref{E2-1}) into (\ref{X1-2rr2}), we get
$$
\alpha=0.
$$
which gives that
\bel{E2-2}
\aligned
&H^r(t,r,z)=\bar{a}r,\\
&H^z(t,r,z)=-2\bar{a}z,
\endaligned
\ee

We now find $H^{\theta}(t,r,z)$. Assume that
\bel{E2-3}
H^{\theta}(t,r,z)={\bar{k}r^pz^q\over (T^*-t)^{\beta}},
\ee
where $\bar{k}\neq0$ and $p,q,\beta$ are unknown constants.

It is easy to check that $H^r(t,r,z), H^z(t,r,z), H^{\theta}(t,r,z)$ given in (\ref{E2-2})-(\ref{E2-3}) satisfy (\ref{X1-2rr2}) and (\ref{X1-2rr4}) with
\bel{E2-4r0}
p-2q+1=0.
\ee

We substitute (\ref{E2-2})-(\ref{E2-3}) into (\ref{X1-2rr3}), it holds
\bel{E2-5r0}
{\beta\bar{k}r^pz^q\over (T^*-t)^{\beta+1}}+{ap\bar{k}r^pz^q\over (T^*-t)^{\beta+1}}-{2a\bar{k}qr^pz^q\over (T^*-t)^{\beta+1}}+2k\bar{a}r(T^*-t)^{2a}={a\bar{k}r^pz^q\over (T^*-t)^{\beta+1}},
\ee
which gives that
$$
\alpha=-2a-1,\quad p=1,\quad q=1,
$$
and
$$
\bar{k}={2\bar{a}k\over 4a+1}.
$$
Thus we get
\bel{E2-6r0}
H^{\theta}(t,r,z)={2\bar{a}krz(T^*-t)^{2a+1}\over 4a+1}.
\ee

In conclusion, by (\ref{E2-2}) and (\ref{E2-6r0}), we obtain
$$
\aligned
&H^r(t,r,z)=\bar{a}r,\\
&H^{\theta}(t,r,z)={2\bar{a}krz(T^*-t)^{2a+1}\over 4a+1},\\
&H^z(t,r,z)=-2\bar{a}z,
\endaligned
$$
which combining with (\ref{X1-6}) gives a family of solutions of system (\ref{X1-2})-(\ref{X1-3}) with the incompressibility condition (\ref{X1-4}). Here constants $\bar{a},k\in\RR/\{0\}$ and $a\in\RR/\{-{1\over 4},0\}$.

Furthermore, we compute the pressure $P$. We substitute (\ref{X1-6}) into (\ref{X1-2}) and (\ref{X1-2R1}), there are
$$
\del_r\bar{P}=\Big(\bar{a}^2+k^2(T^*-t)^{4a}-{a(1+a)\over (T^*-t)^2}-{4\bar{a}^2k^2z^2(T^*-t)^{2(2a+1)}\over (4a+1)^2}\Big)r,
$$
and
$$
\del_z\bar{P}=2z\Big(2\bar{a}^2+{a(1-2a)\over (T^*-t)^2}\Big).
$$
Note that
$$
|\textbf{H}|^2=\bar{a}^2r^2+{4k^2\bar{a}^2r^2z^2(T^*-t)^{2(2a+1)}\over (4a+1)^2}+4\bar{a}^2z^2.
$$
Thus by (\ref{X1-3}), direct computations give the pressure
$$
\aligned
P(t,r,z)&={r^2\over 2}\Big(k^2(T^*-t)^{4a}-{a(a+1)\over (T^*-t)^2}-{8\bar{a}^2k^2z^2(T^*-t)^{2(2a+1)}\over (4a+1)^2}\Big)\\
&\quad+z^2\Big({a(1-2a)\over (T^*-t)^2}-{2\bar{a}^2k^2r^2(T^*-t)^{2(2a+1)}\over (4a+1)^2}\Big).
\endaligned
$$

\end{proof}

Since
$$
\aligned
&\textbf{v}(t,x)=v^r(t,r,z)\textbf{e}_r+v^{\theta}(t,r,z)\textbf{e}_{\theta}+v^z(t,r,z)\textbf{e}_z,\\
&\textbf{H}(t,x)=H^r(t,r,z)\textbf{e}_r+H^{\theta}(t,r,z)\textbf{e}_{\theta}+H^z(t,r,z)\textbf{e}_z,
\endaligned
$$
we can obtain a family of explicit blowup axisymmetric solutions for $3$D incompressible MHD equations by noticing that $\textbf{e}_r,\textbf{e}_{\theta},\textbf{e}_z$ are defined in (\ref{E1-0}), $r=\sqrt{x_1^2+x_2^2}$ and $z=x_3$, and
$$
\aligned
&v^r(t,r,z)={ar\over T^*-t},\\
&v^{\theta}(t,r,z)=kr(T^*-t)^{2a},\\
&v^z(t,r,z)=-{2az\over T^*-t},\\
&H^r(t,r,z)=\bar{a}r,\\
&H^{\theta}(t,r,z)={2\bar{a}krz(T^*-t)^{2a+1}\over 4a+1},\\
&H^z(t,r,z)=-2\bar{a}z,
\endaligned
$$
where constants $\bar{a},k\in\RR/\{0\}$ and $a\in\RR/\{-{1\over 4},0\}$.

Futhermore the vorticity vector $\omega$ is
$$
\omega(t,x)=\omega^r(t,r,z)\textbf{e}_r+\omega^{\theta}(t,r,z)\textbf{e}_{\theta}+\omega^z(t,r,z)\textbf{e}_z,
$$
where
$$
\aligned
&\omega^r(t,r,z)=-\del_zv^{\theta}=0,\\
&\omega^{\theta}(t,r,z)=\del_zv^r-\del_rv^z=0,\\
&\omega^z(t,r,z)={1\over r}\del_r(rv^{\theta})=2k(T^*-t)^{2a}.
\endaligned
$$

By directly computations, we can obtain a family of explicit blowup solutions from (\ref{E1-6r1}).
Moreover, the vorticity vector
$$
\omega(t,x)=\nabla\times\textbf{v}=2k(T^*-t)^{2a}.
$$


\section{Well-posedness for the linearized time evolution}
\setcounter{equation}{0}

Since the dynamical behavior near blowup solutions is the study of local behavior near blowup point, we consider nonlinear stability of explicit blowup solutions in a smooth bounded domain with free boundary as follows
$$
\Omega_{t}:=\Big\{(t,x_1,x_2,x_3):0\leq x_i\leq\sqrt{\overline{T}^*-t},\quad t\in(0,\overline{T}^*),\quad i=1,2,3\Big\}.
$$ 
This section is devoted to the study of the well-posedness of linearized equations.
We take the divergence of the first equation in incompressible MHD (\ref{E1-1}) to get the pressure
$$
\triangle P=\sum_{i,j=1}^3\Big(-{\del v_i\over\del x_j}{\del v_j\over \del x_i}+{\del H_i\over\del x_j}{\del H_j\over \del x_i}\Big)-\triangle({|\textbf{H}|^2\over 2}).
$$

Let
$$
\aligned
&\textbf{v}(t,x)=\textbf{w}(t,x)+\overline{\textbf{v}}_{\overline{T}^*}(t,x),\\
&\textbf{H}(t,x)=\textbf{b}(t,x)+\overline{\textbf{H}}_{\overline{T}^*}(t,x),\\
&P(t,x)=p(t,x)+\overline{P}(t,x),
\endaligned
$$
then substituting above equalities into incompressible MHD equations (\ref{E1-1}), we get the perturbation equations
\bel{E3-1}
\aligned
\textbf{w}_t-\nu\triangle\textbf{w}&=\nabla p-\textbf{w}\cdot\nabla\overline{\textbf{v}}_{\overline{T}^*}-(\overline{\textbf{v}}_{\overline{T}^*}+\textbf{w})\cdot\nabla\textbf{w}
+(\overline{\textbf{H}}_{\overline{T}^*}+\textbf{b})\cdot\nabla\textbf{b}\\
&\quad\quad+\textbf{b}\cdot\nabla\overline{\textbf{H}}_{\overline{T}^*}-\nabla(\overline{\textbf{H}}_{\overline{T}^*}\cdot\textbf{b})-\nabla({|\textbf{b}|^2\over 2}),\\
\textbf{b}_t-\mu\triangle\textbf{b}&=(\overline{\textbf{H}}_{\overline{T}^*}+\textbf{b})\cdot\nabla\textbf{w}+\textbf{b}\cdot\nabla\overline{\textbf{v}}_{\overline{T}^*}
-\overline{\textbf{v}}_{\overline{T}^*}\cdot\nabla\textbf{b}-\textbf{w}\cdot\nabla(\overline{\textbf{H}}_{\overline{T}^*}+\textbf{b}),\\
&\nabla\cdot\textbf{w}=0,\quad \nabla\cdot\textbf{b}=0,
\endaligned
\ee
with initial data
$$
\aligned
&\textbf{w}(0,x)=\textbf{w}_0(x):=\textbf{v}_0(x)-\overline{\textbf{v}}_{T^*}(0,x),\\
&\textbf{b}(0,x)=\textbf{b}_0(x):=\textbf{b}_0(x)-\overline{\textbf{H}}_{T^*}(0,x),
\endaligned
$$
and boundary condition
$$
\textbf{w}(t,x)|_{x\in\del\Omega_t}=0,\quad \textbf{b}(t,x)|_{x\in\Omega_t}=0,
$$
where $(t,x)\in(0,\overline{T}^*)\times\Omega_t$, $\textbf{v}=(v_1,v_2,v_3)^{T}\in\RR^3$, $\textbf{b}=(b_1,b_2,b_3)^T\in\RR^3$
and
\bel{xx1-1}
\nabla\overline{\textbf{v}}=\left(
\begin{array}{ccc}
{a\over \overline{T}^*-t}&-k(\overline{T}^*-t)^{2a}&0\\
k(\overline{T}^*-t)^{2a}&{a\over \overline{T}^*-t}&0\\
0&0&-{2a\over \overline{T}^*-t}
\end{array}
\right),
\ee
\bel{xx1-2}
\nabla\overline{\textbf{H}}=\left(
\begin{array}{ccc}
\bar{a}&{2\bar{a}kx_3(\overline{T}^*-t)^{2a+1}\over 4a+1}&{2\bar{a}kx_2(\overline{T}^*-t)^{2a+1}\over 4a+1}\\
-{2\bar{a}kx_3(\overline{T}^*-t)^{2a+1}\over 4a+1}&\bar{a}&-{2\bar{a}kx_1(\overline{T}^*-t)^{2a+1}\over 4a+1}\\
0&0&-2\bar{a}
\end{array}
\right),
\ee
and the pressure
\bel{E3-0R1}
\aligned
p(t,x)&=-\triangle^{-1}\Big(\sum_{k=1}^3({\del w_k\over\del x_k})^2+2k(\overline{T}^*-t)^{2a}({\del w_2\over \del x_1}-{\del w_1\over \del x_2})
+2{\del w_1\over \del x_2}{\del w_2\over\del x_1}+2{\del w_1\over \del x_3}{\del w_3\over\del x_1}+2{\del w_2\over \del x_3}{\del w_3\over\del x_2}\Big)\\
&\quad+\triangle^{-1}\Big(\sum_{k=1}^3({\del b_k\over\del x_k})^2+{4\bar{a}kx_3(\overline{T}^*-t)^{2a+1}\over 4a+1}({\del b_2\over \del x_1}-{\del b_1\over \del x_2})
+{4\bar{a}kx_2(\overline{T}^*-t)^{2a+1}\over 4a+1}{\del b_3\over\del x_1}\\
&\quad-{4\bar{a}kx_1(\overline{T}^*-t)^{2a+1}\over 4a+1}{\del b_3\over \del x_2}+2\bar{a}({\del b_1\over \del x_1}+{\del b_2\over \del x_2}-2{\del b_3\over \del x_3})
+2{\del b_1\over \del x_2}{\del b_2\over\del x_1}+2{\del b_1\over \del x_3}{\del b_3\over\del x_1}+2{\del b_2\over \del x_3}{\del b_3\over\del x_2}\Big)\\
&\quad-{1\over 2}(b_1^2+b_2^2+b_3^2)-\Big(\bar{a}x_1+{2\bar{a}kx_2x_3(\overline{T}^*-t)^{2a+1}\over 4a+1}\Big)b_1-\Big(\bar{a}x_2-{2\bar{a}kx_1x_3(\overline{T}^*-t)^{2a+1}\over 4a+1}\Big)b_2\\
&\quad+2\bar{a}x_3b_3.
\endaligned
\ee

Let $R\in(0,1)$ be a fixed constant. We define
\bel{E3-2}
\Bcal_{R}:=\{(\textbf{w},\textbf{b}): \|\textbf{w}\|_{\Ccal_1^{s+3}}+\|\textbf{b}\|_{\Ccal_1^{s+3}}\leq R<1,\quad \forall s\in\NN^+\}.
\ee

Assume that fixed functions $(\textbf{w},\textbf{b})\in\Bcal_R$. We linearize nonlinear equations (\ref{E3-1}) around $(\textbf{w},\textbf{b})$ to get
the linearized equations with an external force as follows
\bel{E3-2}
\aligned
\textbf{h}_t-\nu\triangle\textbf{h}
&=-\textbf{h}\cdot\nabla(\overline{\textbf{v}}_{\overline{T}^*}+\textbf{w})-(\overline{\textbf{v}}_{\overline{T}^*}+\textbf{w})\cdot\nabla\textbf{h}+\nabla[(\Fcal_{\textbf{w}} p)\textbf{h}+(\Fcal_{\textbf{b}} p)\textbf{q}]+(\overline{\textbf{H}}_{\overline{T}^*}+\textbf{b})\cdot\nabla\textbf{q}]\\
&\quad+\textbf{q}\cdot\nabla(\overline{\textbf{H}}_{\overline{T}^*}+\textbf{b})-\nabla((\overline{\textbf{H}}_{\overline{T}^*}+\textbf{b})\cdot\textbf{q})+\textbf{f}(t,x),
\endaligned
\ee
\bel{E3-2rr1}
\aligned
\textbf{q}_t-\mu\triangle\textbf{q}&=(\overline{\textbf{H}}_{\overline{T}^*}+\textbf{b})\cdot\nabla\textbf{h}+\textbf{q}\cdot\nabla(\overline{\textbf{v}}_{\overline{T}^*}+\textbf{w})
-(\overline{\textbf{v}}_{\overline{T}^*}+\textbf{w})\cdot\nabla\textbf{q}\\
&\quad-\textbf{h}\cdot\nabla(\overline{\textbf{H}}_{\overline{T}^*}+\textbf{b})+\textbf{g}(t,x),~~
\endaligned
\ee
\bel{E3-2rrx}
\nabla\cdot\textbf{h}=0,\quad \nabla\cdot\textbf{q}=0,
\ee
where $(t,x)\in(0,\overline{T}^*)\times\Omega_t$, $\textbf{h}=(h_1,h_2,h_3)^T\in\RR^3$ and $\textbf{q}=(q_1,q_2,q_3)^T\in\RR^3$ denote two unknown vector functions,
$\Fcal_{\textbf{w}}$ and $\Fcal_{\textbf{b}}$ denote the Fr\'{e}chet derivatives on $\textbf{w}$ and $\textbf{b}$, respectively,
$\textbf{f}(t,x)=(f_1(t,x).f_2(t,x),f_3(t,x))^T\in\RR^3$ and $\textbf{g}(t,x)=(g_1(t,x).g_2(t,x),g_3(t,x))^T\in\RR^3$ are two external forces. More precisely, it holds
\bel{E3-2rr2}
\aligned
&(\Fcal_{\textbf{w}} p)\textbf{h}+(\Fcal_{\textbf{b}} p)\textbf{q}=-2\triangle^{-1}\Big[\sum_{i=1}^3\del_{x_i}w_i\del_{x_i}h_i+k(T^*-t)^{2a}(\del_{x_1}h_2-\del_{x_2}h_1)+\sum_{i,j=1,i\neq j}^3\del_{x_i}h_j\del_{x_j}w_i\\
&\quad-\sum_{i=1}^3\del_{x_i}b_i\del_{x_i}q_i-{2\bar{a}k(T^*-t)^{2a+1}\over 4a+1}\Big(x_3(\del_{x_1}q_2-\del_{x_2}q_1)+x_2\del_{x_1}q_3-x_1\del_{x_2}q_3\Big)\\
&\quad-\bar{a}(\del_{x_1}q_1+\del_{x_2}q_2-2\del_{x_3}q_3)-\sum_{i,j=1,i\neq j}^3\del_{x_i}q_j\del_{x_j}b_i\Big]-{1\over2}\sum_{i=1}^3b_iq_i-\bar{a}\Big(x_1q_1+x_2q_2-2x_3q_3\Big)\\
&\quad-{2\bar{a}k(T^*-t)^{2a+1}\over 4a+1}\Big(x_2x_3q_1-x_1x_3q_2\Big).
\endaligned
\ee

We supplement the linearized equations (\ref{E3-2}) with the initial data
\bel{AAAAE3-2R0}
\textbf{h}(0,x)=\textbf{h}_0(x),\quad \textbf{q}(0,x)=\textbf{q}_0(x),
\ee
and the boundary condition
\bel{AAAAE3-2R1}
\textbf{h}(t,x)|_{x\in\del\Omega_t}=0,\quad \textbf{q}(t,x)|_{x\in\del\Omega_t}=0.
\ee

We introduce the similarity coordinates
\bel{Y-1}
\aligned
&\tau=-\ln(\overline{T}^*-t)+\ln \overline{T}^*,\\
&y={x\over \sqrt{\overline{T}^*-t}},\quad \forall x\in\Omega_t,~~\forall y\in\Omega:=[0,1]^3,
\endaligned
\ee
where one can see the blowup time $\overline{T}^*>0$ has been transformed into $+\infty$ in the similarity coordinates (\ref{Y-1}).
Thus the smooth bounded domain $\Omega_t$ is transformed into a fixed domain
$$
\overline{\Omega}:=\{(\tau,y):0<\tau<+\infty,\quad y\in\Omega:=([0,1])^3\}.
$$
So the local existence of linearized coupled system (\ref{E3-2})-(\ref{E3-2rr1}) with the incompressible condition in some Sobolev space is equivalent to prove the global existence of linearized coupled system. More precisely, equations (\ref{E3-2}) is transformed into three coupled equations as follows
\bel{AE3-3-y-1}
\aligned
&\del_{\tau}h_1-\nu\triangle_yh_1-{y\over2}\cdot\nabla_yh_1+ah_1+ay_1\del_{y_1}h_1+ay_2\del_{y_2}h_1-2ay_3\del_{y_3}h_1\\
&\quad+k(\overline{T}^*)^{2a+1}e^{-(2a+1)\tau}\Big(h_2+y_2\del_{y_1}h_1+y_1\del_{y_2}h_1\Big)
+(\overline{T}^*)^{{1\over2}}e^{-{1\over2}\tau}\sum_{i=1}^3\Big(h_i\del_{y_i}w_1+w_i\del_{y_i}h_1\Big)\\
&=(\overline{T}^*)^{{1\over2}}\del_{y_1}\overline{f}+\overline{T}^*e^{-\tau}f_1(\overline{T}^*(1-e^{-\tau}),(\overline{T}^*)^{{1\over2}}e^{-{1\over2}\tau}y),
\endaligned
\ee
\bel{AE3-4-y-2}
\aligned
&\del_{\tau}h_2-\nu\triangle_yh_2-{y\over2}\cdot\nabla_yh_2+ah_2+ay_1\del_{y_1}h_2+ay_2\del_{y_2}h_2-2ay_3\del_{y_3}h_2\\
&\quad-k(\overline{T}^*)^{2a+1}e^{-(2a+1)\tau}\Big(h_1-y_2\del_{y_1}h_2+y_1\del_{y_2}h_2\Big)+(\overline{T}^*)^{{1\over2}}e^{-{1\over2}\tau}\sum_{i=1}^3\Big(h_i\del_{y_i}w_2+w_i\del_{y_i}h_2\Big)\\
&=(\overline{T}^*)^{{1\over2}}\del_{y_2}\overline{f}+\overline{T}^*e^{-\tau}f_2(\overline{T}^*(1-e^{-\tau}),(\overline{T}^*)^{{1\over2}}e^{-{1\over2}\tau}y),
\endaligned
\ee
\bel{AE3-5-y-3}
\aligned
&\del_{\tau}h_3-\nu\triangle_yh_3-{y\over2}\cdot\nabla_yh_3-2ah_3+ay_1\del_{y_1}h_3+ay_2\del_{y_2}h_3-2ay_3\del_{y_3}h_3\\
&\quad+k(\overline{T}^*)^{2a+1}e^{-(2a+1)\tau}\Big(y_2\del_{y_1}h_3-y_1\del_{y_2}h_3\Big)+(\overline{T}^*)^{{1\over2}}e^{-{1\over2}\tau}\sum_{i=1}^3\Big(h_i\del_{y_i}w_3+w_i\del_{y_i}h_3\Big)\\
&=(\overline{T}^*)^{{1\over2}}\del_{y_3}\overline{f}+\overline{T}^*e^{-\tau}f_3(\overline{T}^*(1-e^{-\tau}),(\overline{T}^*)^{{1\over2}}e^{-{1\over2}\tau}y),
\endaligned
\ee
and equations (\ref{E3-2rr1}) is transformed into three coupled equations as follows
\bel{AENA3-3}
\aligned
&\del_{\tau}q_1-\mu\triangle_yq_1-{y\over2}\cdot\nabla_yq_1-aq_1+ay_1\del_{y_1}q_1+ay_2\del_{y_2}q_1-2ay_3\del_{y_3}q_1+\bar{a}\overline{T}^*e^{-\tau}h_1\\
&\quad+(\overline{T}^*)^{{1\over2}}e^{-{\tau\over2}}\sum_{i=1}^3(h_i\del_{y_1}b_i-b_i\del_{y_1}h_i)
-\bar{a}\Big(y_1\del_{y_1}h_1+y_2\del_{y_1}h_2-2y_3\del_{y_1}h_3\Big)\\
&\quad+k(\overline{T}^*)^{2a+1}e^{-(2a+1)\tau}\Big(-h_2+y_2\del_{y_1}q_1+y_1\del_{y_2}q_1\Big)
-(\overline{T}^*)^{{1\over2}}e^{-{1\over2}\tau}\sum_{i=1}^3\Big(h_i\del_{y_i}w_1-w_i\del_{y_i}q_1\Big)\\
&\quad+{2\bar{a}k(\overline{T}^*)^{2a+{1\over2}}\over 4a+1}e^{-(2a+{1\over2})\tau}\Big(y_1y_3\del_{y_1}h_2-y_2y_3\del_{y_1}h_1-\overline{T}^*e^{-\tau}y_3h_2\Big)\\
&=\overline{T}^*e^{-\tau}g_1(\overline{T}^*(1-e^{-\tau}),(\overline{T}^*)^{{1\over2}}e^{-{1\over2}\tau}y),
\endaligned
\ee
\bel{AENA3-4}
\aligned
&\del_{\tau}q_2-\mu\triangle_yq_2-{y\over2}\cdot\nabla_yq_2-aq_2+ay_1\del_{y_1}q_2+ay_2\del_{y_2}q_2-2ay_3\del_{y_3}q_2+\bar{a}\overline{T}^*e^{-\tau}h_2\\
&\quad+(\overline{T}^*)^{{1\over2}}e^{-{\tau\over2}}\sum_{i=1}^3(h_i\del_{y_2}b_i-b_i\del_{y_2}h_i)-\bar{a}\Big(y_1\del_{y_2}h_1+y_2\del_{y_2}h_2-2y_3\del_{y_2}h_3\Big)\\
&\quad-k(\overline{T}^*)^{2a+1}e^{-(2a+1)\tau}\Big(-h_1-y_2\del_{y_1}q_2+y_1\del_{y_2}q_2\Big)-(\overline{T}^*)^{{1\over2}}e^{-{1\over2}\tau}\sum_{i=1}^3\Big(h_i\del_{y_i}w_2-w_i\del_{y_i}q_2\Big)\\
&\quad+{2\bar{a}k(\overline{T}^*)^{2a+{1\over2}}\over 4a+1}e^{-(2a+{1\over2})\tau}\Big(y_1y_3\del_{y_2}h_2-y_2y_3\del_{y_2}h_1+\overline{T}^*e^{-\tau}y_3h_1\Big)\\
&=\overline{T}^*e^{-\tau}g_2(\overline{T}^*(1-e^{-\tau}),(\overline{T}^*)^{{1\over2}}e^{-{1\over2}\tau}y),
\endaligned
\ee
\bel{AENAY3-1}
\aligned
&\del_{\tau}q_3-\mu\triangle_yq_3-{y\over2}\cdot\nabla_yq_3+2aq_3+ay_1\del_{y_1}q_3+ay_2\del_{y_2}q_3-2ay_3\del_{y_3}q_3-2\bar{a}\overline{T}^*e^{-\tau}h_3\\
&\quad+(\overline{T}^*)^{{1\over2}}e^{-{\tau\over2}}\sum_{i=1}^3(h_i\del_{y_3}b_i-b_i\del_{y_3}h_i)
-\bar{a}\Big(y_1\del_{y_3}h_1+y_2\del_{y_3}h_2-2y_3\del_{y_3}h_3\Big)\\
&\quad+k(\overline{T}^*)^{2a+1}e^{-(2a+1)\tau}\Big(y_2\del_{y_1}q_3-y_1\del_{y_2}q_3\Big)
-(\overline{T}^*)^{{1\over2}}e^{-{1\over2}\tau}\sum_{i=1}^3\Big(h_i\del_{y_i}w_3-w_i\del_{y_i}q_3\Big)\\
&\quad+{2\bar{a}k(\overline{T}^*)^{2a+{1\over2}}\over 4a+1}e^{-(2a+{1\over2})\tau}\Big(y_1y_3\del_{y_3}h_2-y_2y_3\del_{y_3}h_1+\overline{T}^*e^{-\tau}(y_2h_1-y_1h_2)\Big)\\
&=\overline{T}^*e^{-\tau}g_3(\overline{T}^*(1-e^{-\tau}),(\overline{T}^*)^{{1\over2}}e^{-{1\over2}\tau}y),
\endaligned
\ee
with the incompressible condition
$$
\nabla_y\cdot \textbf{h}=0,\quad \nabla_y\cdot \textbf{q}=0
$$
where
\bel{AE3-5R0-y}
\aligned
\overline{f}&=-2\triangle_y^{-1}\Big[\sum_{i=1}^3\del_{y_i}w_i\del_{y_i}h_i+k(\overline{T}^*)^{2a}e^{-2a\tau}(\del_{y_1}h_2-\del_{y_2}h_1)+\sum_{i,j=1,i\neq j}^3\del_{y_i}h_j\del_{y_j}w_i\\
&\quad-\sum_{i=1}^3\del_{y_i}b_i\del_{y_i}q_i-{2\bar{a}k(\overline{T}^*)^{2a+1}e^{-(2a+1)\tau}\over 4a+1}\Big(y_3(\del_{y_1}q_2-\del_{y_2}q_1)+y_2\del_{y_1}q_3-y_1\del_{y_2}q_3\Big)\\
&\quad-\bar{a}(\overline{T}^*)^{{1\over2}}e^{-{1\over2}\tau}(\del_{y_1}q_1+\del_{y_2}q_2-2\del_{y_3}q_3)-\sum_{i,j=1,i\neq j}^3\del_{y_i}q_j\del_{y_j}b_i\Big]-{1\over2}\overline{T}^*e^{-\tau}\sum_{i=1}^3b_iq_i\\
&\quad-\bar{a}(\overline{T}^*)^{{1\over2}}e^{-{1\over2}\tau}(y_1q_1+y_2q_2-2y_3q_3)-{2\bar{a}k(\overline{T}^*)^{2a+1}e^{-(2a+1)\tau}\over 4a+1}(y_2y_3q_1-y_1y_3q_2).
\endaligned
\ee

We supplement the linearized system (\ref{AE3-3-y-1})-(\ref{AENAY3-1}) with the initial data
\bel{E3-2R0}
\aligned
\left\{
\begin{array}{lll}
&h_i(0,y)=h_{i0}(y),\quad \forall y\in\Omega,\quad i=1,2,3,\\
&q_i(0,y)=q_{i0}(y), \quad \forall y\in\Omega,\quad i=1,2,3,
\end{array}
\right.
\endaligned
\ee
and the boundary condition
\bel{E3-2R1}
\aligned
\left\{
\begin{array}{lll}
&h_i(\tau,y)|_{y\in\del\Omega}=0,\quad i=1,2,3,\\
&q_i(\tau,y)|_{y\in\del\Omega}=0, \quad i=1,2,3.
\end{array}
\right.
\endaligned
\ee

\subsection{Time-decay of solutions in $L^2$-norm for the linear system}

We first derive $L^2$-estimate of solutions to linearized equations (\ref{AE3-3-y-1})-(\ref{AENAY3-1}) with the initial data (\ref{E3-2R0}) and boundary condition (\ref{E3-2R1}).

\begin{lemma}
Let constants $a\in(0,\min\{\nu,\mu\})$ and $\forall s\in\NN^+$.
Assume that $\textbf{f},\textbf{g}\in \CC^1((0,+\infty),H^s(\Omega))$ and $(\textbf{w},\textbf{b})^T\in \Bcal_R$. The solution $(\textbf{h},\textbf{q})^T$ of linearized coupled system  (\ref{AE3-3-y-1})-(\ref{AENAY3-1}) with the initial data (\ref{E3-2R0}) and condition (\ref{E3-2R1}) satisfies
$$
\int_{\Omega}\Big(|\textbf{h}|^2+|\textbf{q}|^2\Big)dy
\lesssim e^{-C_{R,\nu,\mu}\tau}\Big[\int_{\Omega}\Big(|\textbf{h}_0|^2+|\textbf{q}_0|^2\Big)dy+\int_0^{+\infty}\int_{\Omega}\Big(|\textbf{f}|^2+|\textbf{g}|^2\Big)dyd\tau\Big],\quad \forall\tau>0,
$$ 
where $C_{R,\nu,\mu}$ is a positive constant depending on constants $R,\nu,\mu$.
\end{lemma}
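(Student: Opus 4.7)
The plan is a direct parabolic energy estimate performed in the self-similar coordinates $(\tau,y)$. First I would test each of the six scalar equations (\ref{AE3-3-y-1})--(\ref{AENAY3-1}) against the corresponding component $h_i$ or $q_i$ in $\LL^2(\Omega)$ and sum, obtaining an identity of the schematic form
\begin{align*}
\frac{1}{2}\frac{d}{d\tau}\bigl(\|\textbf{h}\|_{\LL^2}^2+\|\textbf{q}\|_{\LL^2}^2\bigr)+\nu\|\nabla_y\textbf{h}\|_{\LL^2}^2+\mu\|\nabla_y\textbf{q}\|_{\LL^2}^2=\Rcal_1+\Rcal_2+\Rcal_3+\Rcal_4,
\end{align*}
where $\Rcal_1$ collects the self-similar drift and zero-order linear terms, $\Rcal_2$ the pressure/gradient terms, $\Rcal_3$ the bilinear couplings involving $(\overline{\textbf{v}}_{\overline{T}^*},\overline{\textbf{H}}_{\overline{T}^*})$ and $(\textbf{w},\textbf{b})\in\Bcal_R$, and $\Rcal_4=\int_\Omega(\textbf{h}\cdot\tilde{\textbf{f}}+\textbf{q}\cdot\tilde{\textbf{g}})\,dy$ is the forcing contribution. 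The dissipation on the left follows from integrating the Laplacian by parts using the Dirichlet condition (\ref{E3-2R1}).

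I would then bound each $\Rcal_k$. For $\Rcal_1$: integrating the self-similar drift $-\tfrac{y}{2}\!\cdot\!\nabla_y$ against $\textbf{h}$ yields $-\tfrac{3}{4}\|\textbf{h}\|_{\LL^2}^2$ (boundary terms vanish on $\Omega=[0,1]^3$); the anisotropic drift $ay_1\partial_{y_1}+ay_2\partial_{y_2}-2ay_3\partial_{y_3}$ tested against the same unknown telescopes to $(-\tfrac{a}{2}-\tfrac{a}{2}+a)\|h_k\|^2=0$; and the explicit zero-order linear contributions amount to $-a\|h_1\|^2-a\|h_2\|^2+2a\|h_3\|^2+a\|q_1\|^2+a\|q_2\|^2-2a\|q_3\|^2$. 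For $\Rcal_2$: the key structural observation is that the pressure enters only through a total gradient, so by the divergence-free constraint (\ref{E3-2rrx}) one has $\int_\Omega\textbf{h}\cdot\nabla\varphi\,dy=-\int_\Omega(\nabla\!\cdot\!\textbf{h})\varphi\,dy=0$, and therefore $\Rcal_2\equiv 0$; this removes the only nonlocal ($\triangle^{-1}$) contribution from the energy identity. For $\Rcal_3$: every coupling carries an explicit exponential prefactor $e^{-\alpha\tau}$ with $\alpha>0$ (at worst $e^{-\tau/2}$, from each factor of $\sqrt{\overline{T}^*-t}$), and the $(\textbf{w},\textbf{b})$-dependent pieces are controlled uniformly by $R$; combining Cauchy--Schwarz with Young's inequality and using the boundedness of $|y|$ on $\Omega$ absorbs each such term into a fraction of the dissipation plus a decaying multiple of the energy. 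Finally $|\Rcal_4|\le\eps(\|\textbf{h}\|^2+\|\textbf{q}\|^2)+C_\eps(\|\tilde{\textbf{f}}\|^2+\|\tilde{\textbf{g}}\|^2)$.

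Combining these, the only potentially bad contribution is $+2a\|h_3\|^2$ (which arises when $a>\tfrac{3}{8}$ since the coercive $-\tfrac{3}{4}\|h_3\|^2$ no longer dominates it); I would absorb it into $\tfrac12\nu\|\nabla_y\textbf{h}\|_{\LL^2}^2$ via the Poincar\'e inequality on $\Omega$, which works precisely because $\nu>a$. The outcome is a Gr\"onwall inequality
\begin{align*}
\frac{d}{d\tau}\bigl(\|\textbf{h}\|_{\LL^2}^2+\|\textbf{q}\|_{\LL^2}^2\bigr)\le -2C_{R,\nu,\mu}\bigl(\|\textbf{h}\|_{\LL^2}^2+\|\textbf{q}\|_{\LL^2}^2\bigr)+C\bigl(\|\tilde{\textbf{f}}\|_{\LL^2}^2+\|\tilde{\textbf{g}}\|_{\LL^2}^2\bigr),
\end{align*}
which I integrate and translate back via (\ref{Y-1}) to recover the claimed exponential decay. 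The main obstacle is the sheer number of linear couplings in (\ref{AE3-3-y-1})--(\ref{AENAY3-1}): one must verify term by term that \emph{every} cross term between $\textbf{h}$ and $\textbf{q}$ either vanishes (by the divergence-free identity) or carries a strictly decaying exponential prefactor, since any $\tau$-independent coupling that survives would break the uniform coercivity that produces the decay constant $C_{R,\nu,\mu}$.
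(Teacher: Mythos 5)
Your proposal follows essentially the same route as the paper's proof: componentwise $\LL^2$ energy identities in the self-similar variables, cancellation of the pressure term via the divergence-free condition ($\sum_i\int_\Omega h_i\partial_{y_i}\overline{f}\,dy=0$), Young/Poincar\'e absorption of the drift, zero-order and coupling terms into the dissipation for $\nu,\mu$ large, and Gronwall. One minor correction: not every surviving $\textbf{h}$--$\textbf{q}$ coupling carries a decaying exponential prefactor --- the terms $\bar{a}\bigl(y_1\partial_{y_i}h_1+y_2\partial_{y_i}h_2-2y_3\partial_{y_i}h_3\bigr)q_i$ are $\tau$-independent --- but they are controlled by exactly the Young--Poincar\'e absorption you describe, which is also how the paper treats them.
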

\begin{proof}
Multiplying both sides of (\ref{AE3-3-y-1})-(\ref{AE3-5-y-3}) by $h_1,h_2,h_3$, respectively, then integrating by parts, it holds
\bel{E3-6}
\aligned
&{1\over 2}{d\over d\tau}\|h_1\|_{\LL^2(\Omega)}^2+\nu\sum_{i,j=1}^3\|\del_{y_i} h_j\|^2_{\LL^2(\Omega)}+(a+{3\over4})\|h_1\|^2_{\LL^2(\Omega)}\\
&\quad+k(\overline{T}^*)^{2a+1}e^{-(2a+1)\tau}\int_{\Omega}h_1h_2dy
+(\overline{T}^*)^{{1\over2}}e^{-{1\over2}\tau}\sum_{i=1}^3\int_{\Omega}h_1\Big(h_i\del_{y_i}w_1+w_i\del_{y_i}h_1\Big)dy\\
&=(\overline{T}^*)^{{1\over2}}\int_{\Omega}h_1\del_{y_1}\overline{f}dy+\overline{T}^*e^{-\tau}\int_{\Omega}h_1f_1dy,
\endaligned
\ee
\bel{E3-7}
\aligned
&{1\over 2}{d\over d\tau}\|h_2\|_{\LL^2(\Omega)}^2+\nu\sum_{i,j=1}^3\|\del_{y_i} h_j\|^2_{\LL^2(\Omega)}+(a+{3\over4})\|h_2\|^2_{\LL^2(\Omega)}\\
&\quad-k(\overline{T}^*)^{2a+1}e^{-(2a+1)\tau}\int_{\Omega}h_1h_2dy
+(\overline{T}^*)^{{1\over2}}e^{-{1\over2}\tau}\sum_{i=1}^3\int_{\Omega}h_2\Big(h_i\del_{y_i}w_2+w_i\del_{y_i}h_2\Big)dy\\
&=(\overline{T}^*)^{{1\over2}}\int_{\Omega}h_2\del_{y_2}\overline{f}dy+\overline{T}^*e^{-\tau}\int_{\Omega}h_2f_2dy,
\endaligned
\ee
and
\bel{E3-8}
\aligned
&{1\over 2}{d\over d\tau}\|h_3\|_{\LL^2(\Omega)}^2+\nu\sum_{i,j=1}^3\|\del_{y_i} h_j\|^2_{\LL^2(\Omega)}+({3\over4}-2a)\|h_3\|^2_{\LL^2(\Omega)}\\
&\quad+(\overline{T}^*)^{{1\over2}}e^{-{1\over2}\tau}\sum_{i=1}^3\int_{\Omega}h_3\Big(h_i\del_{y_i}w_3+w_i\del_{y_i}h_3\Big)dy\\
&=(\overline{T}^*)^{{1\over2}}\int_{\Omega}h_3\del_{y_3}\overline{f}dy+\overline{T}^*e^{-\tau}\int_{\Omega}h_3f_3dy.
\endaligned
\ee

Similarly, we multiply both sides of (\ref{AENA3-3})-(\ref{AENAY3-1}) with $h_1,h_2,h_3$ and $q_1,q_2,q_3$, then we integrate by parts to derive
\bel{AAE3-6}
\aligned
&{1\over 2}{d\over d\tau}\|q_1\|_{\LL^2(\Omega)}^2+\mu\sum_{i,j=1}^3\|\del_{y_i} q_j\|^2_{\LL^2(\Omega)}+({3\over4}-a)\|q_1\|^2_{\LL^2(\Omega)}+\bar{a}\overline{T}^*e^{-\tau}\int_{\Omega}q_1h_1dy
\\
&\quad+(\overline{T}^*)^{{1\over2}}e^{-{\tau\over2}}\sum_{i=1}^3\int_{\Omega}(h_i\del_{y_1}b_i-b_i\del_{y_1}h_i)q_1dy
-\bar{a}\int_{\Omega}\Big(y_1\del_{y_1}h_1+y_2\del_{y_1}h_2-2y_3\del_{y_1}h_3\Big)q_1dy\\\
&\quad-k(\overline{T}^*)^{2a+1}e^{-(2a+1)\tau}\int_{\Omega}h_2q_1dy
-(\overline{T}^*)^{{1\over2}}e^{-{1\over2}\tau}\sum_{i=1}^3\int_{\Omega}\Big(h_i\del_{y_i}w_1-w_i\del_{y_i}q_1\Big)q_1dy\\
&\quad+{2\bar{a}k(\overline{T}^*)^{2a+{1\over2}}\over 4a+1}e^{-(2a+{1\over2})\tau}\int_{\Omega}\Big(y_1y_3\del_{y_1}h_2-y_2y_3\del_{y_1}h_1-\overline{T}^*e^{-\tau}y_3h_2\Big)q_1dy\\
&=\overline{T}^*e^{-\tau}\int_{\Omega}g_1q_1dy,
\endaligned
\ee
\bel{AAE3-7}
\aligned
&{1\over 2}{d\over d\tau}\|q_2\|_{\LL^2(\Omega)}^2+\mu\sum_{i,j=1}^3\|\del_{y_i} q_j\|^2_{\LL^2(\Omega)}+({3\over4}-a)\|q_2\|^2_{\LL^2(\Omega)}
+\bar{a}\overline{T}^*e^{-\tau}\int_{\Omega}q_2h_2dy
\\
&\quad+(\overline{T}^*)^{{1\over2}}e^{-{\tau\over2}}\sum_{i=1}^3\int_{\Omega}(h_i\del_{y_1}b_i-b_i\del_{y_1}h_i)q_2dy
-\bar{a}\int_{\Omega}\Big(y_1\del_{y_2}h_1+y_2\del_{y_2}h_2-2y_3\del_{y_2}h_3\Big)q_2dy\\\
&\quad+k(\overline{T}^*)^{2a+1}e^{-(2a+1)\tau}\int_{\Omega}h_1q_2dy
-(\overline{T}^*)^{{1\over2}}e^{-{1\over2}\tau}\sum_{i=1}^3\int_{\Omega}\Big(h_i\del_{y_i}w_2-w_i\del_{y_i}q_2\Big)q_2dy\\
&\quad+{2\bar{a}k(\overline{T}^*)^{2a+{1\over2}}\over 4a+1}e^{-(2a+{1\over2})\tau}\int_{\Omega}\Big(y_1y_3\del_{y_2}h_2-y_2y_3\del_{y_2}h_1+\overline{T}^*e^{-\tau}y_3h_1\Big)q_2dy\\
&=\overline{T}^*e^{-\tau}\int_{\Omega}g_2q_2dy,
\endaligned
\ee
and
\bel{AAE3-8}
\aligned
&{1\over 2}{d\over d\tau}\|q_3\|_{\LL^2(\Omega)}^2+\mu\sum_{i,j=1}^3\|\del_{y_i} q_j\|^2_{\LL^2(\Omega)}+({3\over4}+2a)\|q_3\|^2_{\LL^2(\Omega)}
-2\bar{a}\overline{T}^*e^{-\tau}\int_{\Omega}q_3h_3dy\\
&\quad+(\overline{T}^*)^{{1\over2}}e^{-{\tau\over2}}\sum_{i=1}^3\int_{\Omega}(h_i\del_{y_1}b_i-b_i\del_{y_1}h_i)q_3dy
-\bar{a}\int_{\Omega}\Big(y_1\del_{y_3}h_1+y_2\del_{y_3}h_2-2y_3\del_{y_3}h_3\Big)q_3dy\\
&\quad
-(\overline{T}^*)^{{1\over2}}e^{-{1\over2}\tau}\sum_{i=1}^3\int_{\Omega}\Big(h_i\del_{y_i}w_3-w_i\del_{y_i}q_3\Big)q_3dy\\
&\quad+{2\bar{a}k(\overline{T}^*)^{2a+{1\over2}}\over 4a+1}e^{-(2a+{1\over2})\tau}\int_{\Omega}\Big(y_1y_3\del_{y_3}h_2-y_2y_3\del_{y_3}h_1+\overline{T}^*e^{-\tau}(y_2h_1-y_1h_2)\Big)q_3dy\\
&=\overline{T}^*e^{-\tau}\int_{\Omega}g_3q_3dy,
\endaligned
\ee

We sum up (\ref{E3-6})-(\ref{AAE3-8}) to get
\bel{E3-6-y-0}
\aligned
&{1\over 2}\sum_{i=1}^3{d\over d\tau}\Big(\|h_i\|_{\LL^2(\Omega)}^2+\|q_i\|^2_{\LL^2(\Omega)}\Big)+3\sum_{i,j=1}^3\Big(\nu\|\del_{y_i} h_j\|^2_{\LL^2(\Omega)}+\mu|\del_{y_i} q_j\|^2_{\LL^2(\Omega)}\Big)\\
&\quad+(a+{3\over4})\Big(\|h_1\|^2_{\LL^2(\Omega)}+\|h_2\|^2_{\LL^2(\Omega)}\Big)+({3\over4}-2a)\|h_3\|^2_{\LL^2(\Omega)}
+({3\over4}-a)\Big(\|q_1\|^2_{\LL^2(\Omega)}+\|q_2\|^2_{\LL^2(\Omega)}\Big)\\
&\quad+({3\over4}+2a)\|q_3\|^2_{\LL^2(\Omega)}+(\overline{T}^*)^{{1\over2}}e^{-{\tau\over2}}\sum_{j=1}^3\sum_{i=1}^3\int_{\Omega}\Big(h_j(h_i\del_{y_i}w_j+w_i\del_{y_i}h_j)+(h_i\del_{y_j}b_i-b_i\del_{y_j}h_i)q_j\Big)dy\\
&\quad-\bar{a}\sum_{i=1}^3\int_{\Omega}\Big(y_1\del_{y_i}h_1+y_2\del_{y_i}h_2-2y_3\del_{y_i}h_3\Big)q_idy-k(\overline{T}^*)^{2a+1}e^{-(2a+1)\tau}\int_{\Omega}\Big(h_2q_1+h_1q_2\Big)dy\\
&\quad-(\overline{T}^*)^{{1\over2}}e^{-{1\over2}\tau}\sum_{j=1}^3\sum_{i=1}^3\int_{\Omega}\Big(h_i\del_{y_i}w_j-w_i\del_{y_i}q_j\Big)q_jdy\\
&\quad-{2\bar{a}k(\overline{T}^*)^{2a+{3\over2}}\over 4a+1}e^{-(2a+{3\over2})\tau}\int_{\Omega}\Big(y_3(-h_2q_1+h_1q_2)+(y_2h_1-y_1h_2)q_3\Big)dy
\\
&\quad+{2\bar{a}k(\overline{T}^*)^{2a+{1\over2}}\over 4a+1}e^{-(2a+{1\over2})\tau}\sum_{i=1}^3\int_{\Omega}\Big(y_1y_3\del_{y_i}h_2-y_2y_3\del_{y_i}h_1\Big)q_idy\\
&=(\overline{T}^*)^{{1\over2}}\sum_{i=1}^3\int_{\Omega}h_i\del_{y_i}\overline{f}dy+\overline{T}^*e^{-\tau}\sum_{i=1}^3\int_{\Omega}\Big(h_if_i+q_ig_i\Big)dy.
\endaligned
\ee

We now estimate each coupled nonlinear term in (\ref{E3-6-y-0}). Note that $y\in\Omega$, $(\textbf{w},\textbf{b})\in \Bcal_R$ and $H^{{3}}(\Omega)\subset L^{\infty}(\Omega)$. We use Young's inequality to derive
\bel{E3-9}
\aligned
&\Big|\sum_{j=1}^3\sum_{i=1}^3\int_{\Omega}\Big(h_j(h_i\del_{y_i}w_j+w_i\del_{y_i}h_j)+(h_i\del_{y_j}b_i-b_i\del_{y_j}h_i)q_j\Big)\Big|\\
&\lesssim \Big(\sum_{j=1}^3\sum_{i=1}^3\|\del_{y_i}w_j\|^2_{\LL^{\infty}(\Omega)}+\|w_i\|^2_{\LL^{\infty}(\Omega)}
+\|\del_{y_j}q_i\|^2_{\LL^{\infty}(\Omega)}+\|q_i\|^2_{\LL^{\infty}(\Omega)}\Big)\\
&\quad\times\Big(\sum_{i=1}^3(\|h_i\|^2_{\LL^2(\Omega)}+\|q_i\|^2_{\LL^2(\Omega)})+\sum_{j=1}^3\sum_{i=1}^3\|\del_{y_i}h_j\|^2_{\LL^2(\Omega)}\Big),\\
&\lesssim C_R\Big(\sum_{i=1}^3(\|h_i\|^2_{\LL^2(\Omega)}+\|q_i\|^2_{\LL^2(\Omega)})+\sum_{j=1}^3\sum_{i=1}^3\|\del_{y_i}h_j\|^2_{\LL^2(\Omega)}\Big),
\endaligned
\ee
Similarly, it holds
\bel{AE3-9}
\aligned
&\Big|\sum_{i=1}^3\int_{\Omega}\Big(y_1\del_{y_i}h_1+y_2\del_{y_i}h_2-2y_3\del_{y_i}h_3\Big)q_idy\Big|\lesssim  C_R\sum_{i=1}^3\Big(\|q_i\|^2_{\LL^2(\Omega)}+\sum_{j=1}^3\|\del_{y_i}h_j\|^2_{\LL^2(\Omega)}\Big),\\
&\Big|\int_{\Omega}\Big(h_2q_1+h_1q_2\Big)dy\Big|\leq {1\over2}\sum_{i=1}^2\Big(\|h_i\|^2_{\LL^2(\Omega)}+\|q_i\|^2_{\LL^2(\Omega)}\Big),
\endaligned
\ee
and
\bel{aAE3-9}
\aligned
&\Big|\sum_{j=1}^3\sum_{i=1}^3\int_{\Omega}\Big(h_i\del_{y_i}w_j-w_i\del_{y_i}q_j\Big)q_jdy\Big|\lesssim C_R\sum_{i=1}^2\Big(\|h_i\|^2_{\LL^2(\Omega)}+\|q_i\|^2_{\LL^2(\Omega)}\Big),\\
&\Big|\int_{\Omega}\Big(y_3(-h_2q_1+h_1q_2)+(y_2h_1-y_1h_2)q_3\Big)dy\Big|\leq{1\over2}\Big(\sum_{i=1}^3\|q_i\|^2_{\LL^2(\Omega)}+\sum_{j=1}^2\|h_j\|^2_{\LL^2(\Omega)}\Big),\\
&\Big|\sum_{i=1}^3\int_{\Omega}\Big(y_1y_3\del_{y_i}h_2-y_2y_3\del_{y_i}h_1\Big)q_idy\Big|\leq{1\over2}\sum_{i=1}^3\Big(\|q_i\|^2_{\LL^2(\Omega)}+\sum_{j=1}^2\|\del_{y_i}h_j\|^2_{\LL^2(\Omega)}\Big),
\endaligned
\ee
and
\bel{E3-10-y-0}
\Big|\sum_{i=1}^3\int_{\RR^3}\Big(h_if_i+q_ig_i\Big)dy\Big|\leq{1\over2}\sum_{i=1}^3\Big(\|h_i\|^2_{\LL^2(\Omega)}+\|q_i\|^2_{\LL^2(\Omega)}+\|f_i\|^2_{\LL^2(\Omega)}+\|g_i\|^2_{\LL^2(\Omega)}\Big).
\ee
where $C_R$ is a postive constant depending on $R$.

On the other hand, by (\ref{AE3-5R0-y}) and incompressible condition, we integrate by parts to derive
\bel{E3-10}
\sum_{i=1}^3\int_{\Omega}h_i\del_{y_i}\overline{f}dy=\int_{\Omega}(\sum_{i=1}^3\del_{y_i}h_i)\overline{f}dy=0.
\ee

Thus by (\ref{E3-9})-(\ref{E3-10-y-0}), it follows from (\ref{E3-6-y-0}) that
\bel{E3-11}
\aligned
{1\over 2}\sum_{i=1}^3{d\over d\tau}\Big(\|h_i\|_{\LL^2(\Omega)}^2+\|q_i\|^2_{\LL^2(\Omega)}\Big)&+\sum_{i,j=1}^3\Big((3\nu-{1\over2}-C_{R})\|\del_{y_i} h_j\|^2_{\LL^2(\Omega)}+(3\mu-C_R)|\del_{y_i} q_j\|^2_{\LL^2(\Omega)}\Big)\\
&+(a-{3\over4}-C_R)\Big(\|h_1\|^2_{\LL^2(\Omega)}+\|h_2\|^2_{\LL^2(\Omega)}\Big)-(2a+C_R)\|h_3\|^2_{\LL^2(\Omega)}\\
&-({5\over4}+a+C_R)\Big(\|q_1\|^2_{\LL^2(\Omega)}+\|q_2\|^2_{\LL^2(\Omega)}\Big)
+(-{5\over4}+2a-C_R)\|q_3\|^2_{\LL^2(\Omega)}\\
&\lesssim{1\over2}\sum_{i=1}^3\Big(\|f_i\|^2_{\LL^2(\Omega)}+\|g_i\|^2_{\LL^2(\Omega)}\Big).
\endaligned
\ee
where $C_{R,\overline{T}^*}$ is a postive constant depending on $R$, which is small as $R$ small.

Since we use Poincar\'{e} inequality to derive
$$
\aligned
\sum_{i,j=1}^3\|\del_{y_i} h_j\|^2_{\LL^2(\Omega)}\gtrsim \sum_{i=1}^3\|h_i\|^2_{\LL^2(\Omega)},\\
\sum_{i,j=1}^3\|\del_{y_i} q_j\|^2_{\LL^2(\Omega)}\gtrsim \sum_{i=1}^3\|q_i\|^2_{\LL^2(\Omega)},
\endaligned
$$
it holds
\bel{AEA3-11}
\aligned
{1\over 2}\sum_{i=1}^3{d\over d\tau}\Big(\|h_i\|_{\LL^2(\Omega)}^2+\|q_i\|^2_{\LL^2(\Omega)}\Big)&+(3\nu+a-{7\over4}-C_R)\Big(\|h_1\|^2_{\LL^2(\Omega)}+\|h_2\|^2_{\LL^2(\Omega)}\Big)\\
&\quad+(3\nu-2a-{1\over2}-C_R)\|h_3\|^2_{\LL^2(\Omega)}\\
&\quad+(3\mu-{5\over4}-a-C_R)\Big(\|q_1\|^2_{\LL^2(\Omega)}+\|q_2\|^2_{\LL^2(\Omega)}\Big)\\
&\quad+(3\mu-{5\over4}+2a-C_R)\|q_3\|^2_{\LL^2(\Omega)}\\
&\lesssim{1\over2}\sum_{i=1}^3\Big(\|f_i\|^2_{\LL^2(\Omega)}+\|g_i\|^2_{\LL^2(\Omega)}\Big).
\endaligned
\ee

Let $0<a<\min\{\nu,\mu\}$.
We notice that we can choose a sufficient small positive constant $R\ll\min\{1,\nu,\mu\}$ and sufficient big $\nu$ and $\mu$ such that
$$
\aligned
&3\nu+a-{7\over4}-C_R>0,\\
&3\nu-2a-{1\over2}-C_R>0,\\
&3\mu-{5\over4}-a-C_R>0,\\
&3\mu-{5\over4}+2a-C_R>0.
\endaligned
$$

Hence, applying Gronwall's inequality to (\ref{AEA3-11}), there exists a positive constant $C_{R,\nu,\mu}$ depending on $R,\nu,\mu$ such that
$$
\sum_{i=1}^3\Big(\|h_i\|_{\LL^2(\Omega)}^2+\|q_i\|_{\LL^2(\Omega)}^2\Big)\lesssim e^{-C_{R,\nu,\mu}\tau}\sum_{i=1}^3\int_{\Omega}\Big[h_{0i}^2
+q_{0i}^2+\int_0^{+\infty}
\Big(f_i^2+g_i^2\Big)d\tau\Big]dy,\quad \forall \tau>0.
$$

\end{proof}

\subsection{Time-decay of solutions in $H^s$-norm for the linear system}

In what follows, we plan to carry out a higher order derivative estimates to the solutions of linearized system (\ref{AE3-3-y-1})-(\ref{AENAY3-1}).
For a fixed integer $s\geq1$, 
applying $\nabla_y^{s}:=\Big(\del_{y_1}^s,\del_{y_2}^s,\del_{y_3}^s\Big)^T$ to both sides of (\ref{AE3-3-y-1})-(\ref{AENAY3-1}), we obtain
\bel{AAE3-3-y-1}
\aligned
&\del_{\tau}\nabla_y^sh_1-\nu\triangle_y\nabla_y^sh_1-{y\over2}\cdot\del_y\nabla_y^sh_1+\Big(a(1+s)-{s\over2}\Big)\nabla_y^sh_1+ay_1\del_{y_1}\nabla_y^sh_1+ay_2\del_{y_2}\nabla_y^sh_1\\
&\quad-2ay_3\del_{y_3}\nabla_y^sh_1-3as\Big(0,0,\del_{y_3}^sh_1\Big)^T
+k(\overline{T}^*)^{2a+1}e^{-(2a+1)\tau}\Big(\nabla_y^sh_2+y_2\del_{y_1}\nabla_y^sh_1+y_1\del_{y_2}\nabla_y^sh_1\Big)\\
&\quad+ks(\overline{T}^*)^{2a+1}e^{-(2a+1)\tau}\Big(\del_{y_2}\del_{y_1}^{s-1}h_1,\del_{y_1}\del_{y_2}^{s-1}h_1,0\Big)^T=\tilde{f}_1,
\endaligned
\ee
\bel{AAE3-4-y-2}
\aligned
&\del_{\tau}\nabla^s_yh_2-\nu\triangle_y\nabla^s_yh_2-{y\over2}\cdot\del_y\nabla^s_yh_2+\Big(a(s+1)-{s\over2}\Big)\nabla^s_yh_2+ay_1\del_{y_1}\nabla^s_yh_2+ay_2\del_{y_2}\nabla^s_yh_2\\
&\quad-2ay_3\del_{y_3}\nabla^s_yh_2-3as\Big(0,0,\del_{y_3}^sh_2\Big)^T
+k(\overline{T}^*)^{2a+1}e^{-(2a+1)\tau}\Big(-\nabla^s_yh_1
+y_2\del_{y_1}\nabla^s_yh_2-y_1\del_{y_2}\nabla^s_yh_2\Big)\\
&\quad+ks(\overline{T}^*)^{2a+1}e^{-(2a+1)\tau}\Big(-\del_{y_2}\del_{y_1}^{s-1}h_2,\del_{y_1}\del_{y_2}^{s-1}h_2,0\Big)^T
=\tilde{f}_2,
\endaligned
\ee
\bel{AAE3-5-y-3}
\aligned
&\del_{\tau}\nabla^s_yh_3-\nu\triangle_y\nabla^s_yh_3-{y\over2}\cdot\del_y\nabla^s_yh_3+\Big(a(s-2)-{s\over2}\Big)\nabla^s_yh_3+ay_1\del_{y_1}\nabla^s_yh_3+ay_2\del_{y_2}\nabla^s_yh_3\\
&\quad-2ay_3\del_{y_3}\nabla^s_yh_3-3as\Big(0,0,\del_{y_3}^sh_3\Big)^T
+k(\overline{T}^*)^{2a+1}e^{-(2a+1)\tau}\Big(y_2\del_{y_1}\nabla^s_yh_3-y_1\del_{y_2}\nabla^s_yh_3\Big)\\
&\quad+ks(\overline{T}^*)^{2a+1}e^{-(2a+1)\tau}\Big(-\del_{y_2}\del_{y_1}^{s-1}h_3,\del_{y_1}\del_{y_2}^{s-1}h_3,0\Big)^T=\tilde{f}_3,
\endaligned
\ee
and 
\bel{AAENA3-3}
\aligned
&\del_{\tau}\nabla_y^{s}q_1-\mu\triangle_y\nabla_y^{s}q_1-{y\over2}\cdot\del_y\nabla_y^{s}q_1+\Big(a(s-1)-{s\over2}\Big)\nabla_y^{s}q_1+ay_1\del_{y_1}\nabla_y^{s}q_1+ay_2\del_{y_2}\nabla_y^{s}q_1-2ay_3\del_{y_3}\nabla_y^{s}q_1\\
&\quad-3as\Big(0,0\del_{y_3}^sq_1\Big)^T+\bar{a}\overline{T}^*e^{-\tau}\nabla_y^{s}h_1-\bar{a}\Big(y_1\del_{y_1}\nabla_y^{s}h_1+y_2\del_{y_1}\nabla_y^{s}h_2-2y_3\del_{y_1}\nabla_y^{s}h_3\Big)\\
&\quad-\bar{a}s\Big(\del_{y_1}^sh_1,\del_{y_1}^sh_2,-2\del_{y_1}^sh_3\Big)^T
+k(\overline{T}^*)^{2a+1}e^{-(2a+1)\tau}\Big[-\nabla_y^{s}h_2+y_2\del_{y_1}\nabla_y^{s}q_1+y_1\del_{y_2}\nabla_y^{s}q_1\Big]\\
&\quad+ks(\overline{T}^*)^{2a+1}e^{-(2a+1)\tau}\Big(\del_{y_2}\del_{y_1}^{s-1}q_1,\del_{y_1}\del_{y_2}^{s-1}q_1,0\Big)^T\\
&\quad+{2\bar{a}k(\overline{T}^*)^{2a+{1\over2}}\over 4a+1}e^{-(2a+{1\over2})\tau}\Big(y_1y_3\del_{y_1}\nabla_y^sh_2-y_2y_3\del_{y_1}\nabla_y^sh_1-\overline{T}^*e^{-\tau}y_3\nabla_y^sh_2\Big)\\
&\quad+{2\bar{a}k(\overline{T}^*)^{2a+{1\over2}}\over 4a+1}se^{-(2a+{1\over2})\tau}\Big(y_3\del_{y_1}^sh_2,-y_3\del_{y_1}\del_{y_2}^{s-1}h_1,y_1\del_{y_1}\del_{y_3}^{s-1}h_2-y_2\del_{y_1}\del_{y_3}^{s-1}h_1-\overline{T}^*e^{-\tau}\del_{y_3}^{s-1}h_2\Big)^T=\tilde{g}_1,
\endaligned
\ee
\bel{AAENA3-4}
\aligned
&\del_{\tau}\nabla_y^{s}q_2-\mu\triangle_y\nabla_y^{s}q_2-{y\over2}\cdot\del_y\nabla_y^{s}q_2+\Big(a(s-1)-{s\over2}\Big)\nabla_y^{s}q_2+ay_1\del_{y_1}\nabla_y^{s}q_2+ay_2\del_{y_2}\nabla_y^{s}q_2-2ay_3\del_{y_3}\nabla_y^{s}q_2\\
&\quad-3as\Big(0,0,\del_{y_3}^sq_2\Big)^T+\bar{a}\overline{T}^*e^{-\tau}\nabla_y^{s}h_2-\bar{a}\Big(y_1\del_{y_2}\nabla_y^sh_1+y_2\del_{y_2}\nabla_y^sh_2-2y_3\del_{y_2}\nabla_y^sh_3\Big)\\
&\quad-\bar{a}s\Big(\del_{y_2}^sh_1,\del_{y_2}^sh_2,-2\del_{y_2}^sh_3\Big)^T+k(\overline{T}^*)^{2a+1}e^{-(2a+1)\tau}\Big(\nabla_y^{s}h_1+y_2\del_{y_1}\nabla_y^{s}q_2-y_1\del_{y_2}\nabla_y^{s}q_2\Big)\\
&\quad+k(\overline{T}^*)^{2a+1}e^{-(2a+1)\tau}s\Big(\del_{y_2}\del_{y_1}^{s-1}q_2,-\del_{y_1}\del_{y_2}^{s-1}q_2,0\Big)^T\\
&\quad+{2\bar{a}k(\overline{T}^*)^{2a+{1\over2}}\over 4a+1}e^{-(2a+{1\over2})\tau}\Big(y_1y_3\del_{y_2}\nabla_y^sh_2-y_2y_3\del_{y_2}\nabla_y^sh_1+\overline{T}^*e^{-\tau}y_3\nabla_y^sh_1\Big)\\
&\quad+{2\bar{a}k(\overline{T}^*)^{2a+{1\over2}}\over 4a+1}se^{-(2a+{1\over2})\tau}\Big(y_3\del_{y_2}\del_{y_1}^{s-1}h_2,-y_3\del_{y_2}^{s}h_1,y_1\del_{y_2}\del_{y_3}^{s-1}h_2-y_2\del_{y_2}\del_{y_3}^{s-1}h_1+\overline{T}^*e^{-\tau}\del_{y_3}^{s-1}h_1\Big)^T=\tilde{g}_2,
\endaligned
\ee
\bel{AAENAY3-1}
\aligned
&\del_{\tau}\nabla_y^{s}q_3-\mu\triangle_y\nabla_y^{s}q_3-{y\over2}\cdot\del_y\nabla_y^{s}q_3+\Big(a(s+2)-{s\over2}\Big)\nabla_y^{s}q_3+ay_1\del_{y_1}\nabla_y^{s}q_3+ay_2\del_{y_2}\nabla_y^{s}q_3-2ay_3\del_{y_3}\nabla_y^{s}q_3\\
&\quad-3as\Big(0,0,\del_{y_3}^sq_3\Big)^T-2\bar{a}\overline{T}^*e^{-\tau}\nabla_y^sh_3
-\bar{a}\Big(y_1\del_{y_3}\nabla_y^sh_1+y_2\del_{y_3}\nabla_y^sh_2-2y_3\del_{y_3}\nabla_y^sh_3\Big)\\
&\quad-\bar{a}s\Big(\del_{y_3}^sh_1,\del_{y_3}^sh_2,-2\del_{y_3}^sh_3\Big)^T+k(\overline{T}^*)^{2a+1}e^{-(2a+1)\tau}\Big(y_2\del_{y_1}\nabla_y^{s}q_3-y_1\del_{y_2}\nabla_y^{s}q_3\Big)\\
&\quad+k(\overline{T}^*)^{2a+1}e^{-(2a+1)\tau}s\Big(-\del_{y_2}\del_{y_1}^{s-1}q_3,\del_{y_1}\del_{y_2}^{s-1}q_3,0\Big)^T\\
&\quad+{2\bar{a}k(\overline{T}^*)^{2a+{1\over2}}\over 4a+1}e^{-(2a+{1\over2})\tau}\Big(y_1y_3\del_{y_3}\nabla_y^sh_2-y_2y_3\del_{y_3}\nabla_y^sh_1+\overline{T}^*e^{-\tau}(y_2\nabla_y^sh_1-y_1\nabla_y^sh_2)\Big)\\
&\quad+{2\bar{a}k(\overline{T}^*)^{2a+{1\over2}}\over 4a+1}se^{-(2a+{1\over2})\tau}\Big(y_3\del_{y_3}\del_{y_1}^{s-1}h_2,-y_3\del_{y_3}\del_{y_2}^{s-1}h_1,\del_{y_3}^{s}h_2-y_2\del_{y_3}^{s}h_1+\overline{T}^*e^{-\tau}\del_{y_3}^{s-1}h_1\Big)^T\\
&\quad+{2\bar{a}k(\overline{T}^*)^{2a+{3\over2}}\over 4a+1}e^{-(2a+{3\over2})\tau}\Big(-\del_{y_1}^{s-1}h_2,\del_{y_2}^{s-1}h_1\Big)^T=\tilde{g}_3,
\endaligned
\ee
where we denote by $\nabla^{i_2}_yh\nabla^{i_1}_yw:=\Big(\del_{y_1}^{i_2}h\del_{y_1}^{i_2}w,\del_{y_2}^{i_2}h\del_{y_2}^{i_2}w,\del_{y_3}^{i_2}h\del_{y_3}^{i_2}w\Big)^T$ for convenience, and
\bel{AE3-5RR0}
\tilde{f}_1:=(\overline{T}^*)^{{1\over2}}\del_{y_1}\nabla_y^s\overline{f}+\overline{T}^*e^{-\tau}\nabla_y^sf_1-(\overline{T}^*)^{{1\over2}}e^{-{1\over2}\tau}\sum_{i_1+i_2=s,~0\leq i_2\leq s}\sum_{j=1}^3
\Big(\nabla^{i_2}_yh_j\del_{y_j}\nabla^{i_1}_yw_1+\nabla^{i_1}_yw_j\del_{y_j}\nabla_y^{i_2}h_1\Big),
\ee
\bel{AE3-5RR1}
\tilde{f}_2:=(\overline{T}^*)^{{1\over2}}\del_{y_2}\nabla^s_y\overline{f}+\overline{T}^*e^{-\tau}\nabla^s_yf_2
-(\overline{T}^*)^{{1\over2}}e^{-{1\over2}\tau}\sum_{i_1+i_2=s,~0\leq i_2\leq s}\sum_{j=1}^3
\Big(\nabla^{i_2}_yh_j\del_{y_j}\nabla^{i_1}_yw_2+\nabla^{i_1}_yw_j\del_{y_j}\nabla^{i_2}_yh_2\Big),
\ee
\bel{AE3-5RR2}
\tilde{f}_3:=(\overline{T}^*)^{{1\over2}}\del_{y_3}\nabla^s_y\overline{f}+\overline{T}^*e^{-\tau}\nabla^s_yf_3
-(\overline{T}^*)^{{1\over2}}e^{-{1\over2}\tau}\sum_{i_1+i_2=s,~0\leq i_2\leq s}\sum_{j=1}^3\Big(\nabla^{i_2}_yh_j\del_{y_j}\nabla^{i_1}_yw_3+\nabla^{i_1}_yw_j\del_{y_j}\nabla^{i_2}_yh_3\Big),
\ee
\bel{AAAE3-5RR0}
\aligned
\tilde{g}_1&:=\overline{T}^*e^{-\tau}\nabla_y^sg_1+(\overline{T}^*)^{{1\over2}}e^{-{1\over2}\tau}\sum_{i_1+i_2=s,~0\leq i_2\leq s}\sum_{j=1}^3\Big(\nabla_y^{i_1}h_j\del_{y_j}\nabla_y^{i_2}w_1-\nabla_y^{i_1}w_j\del_{y_j}\nabla_y^{i_2}q_1\Big)\\
&\quad-(\overline{T}^*)^{{1\over2}}e^{-{1\over2}\tau}\sum_{i_1+i_2=s,~0\leq i_2\leq s}\sum_{j=1}^3
\Big(\nabla^{i_1}_yh_j\del_{y_1}\nabla^{i_2}_yb_j-\nabla^{i_1}_yb_j\del_{y_1}\nabla_y^{i_2}h_j\Big),
\endaligned
\ee
\bel{AAAE3-5RR1}
\aligned
\tilde{g}_2&:=\overline{T}^*e^{-\tau}\nabla_y^sg_2+(\overline{T}^*)^{{1\over2}}e^{-{1\over2}\tau}\sum_{i_1+i_2=s,~0\leq i_2\leq s}\sum_{j=1}^3\Big(\nabla_y^{i_1}h_j\del_{y_j}\nabla_y^{i_2}w_2-\nabla_y^{i_1}w_j\del_{y_j}\nabla_y^{i_2}q_2\Big)\\
&\quad-(\overline{T}^*)^{{1\over2}}e^{-{\tau\over2}}\sum_{i_1+i_2=s,~0\leq i_2\leq s}\sum_{j=1}^3\Big(\nabla_y^{i_1}h_j\del_{y_2}\nabla_y^{i_2}b_j-\nabla_y^{i_1}b_j\del_{y_2}\nabla_y^{i_2}h_j\Big),
\endaligned
\ee
\bel{AAAE3-5RR2}
\aligned
\tilde{g}_3&:=\overline{T}^*e^{-\tau}\nabla_y^sg_2+(\overline{T}^*)^{{1\over2}}e^{-{1\over2}\tau}\sum_{i_1+i_2=s,~0\leq i_2\leq s}\sum_{j=1}^3\Big(\nabla_y^{i_1}h_j\del_{y_j}\nabla_y^{i_2}w_3-\nabla_y^{i_1}w_j\del_{y_j}\nabla_y^{i_2}q_3\Big)\\
&\quad-(\overline{T}^*)^{{1\over2}}e^{-{\tau\over2}}\sum_{i_1+i_2=s,~0\leq i_2\leq s}\sum_{j=1}^3\Big(\nabla^{i_1}_yh_j\del_{y_3}\nabla_y^{i_2}b_j-\nabla_y^{i_1}b_j\del_{y_3}\nabla_y^{i_2}h_j\Big).
\endaligned
\ee

\begin{lemma}
Let viscosity constant $\nu$ and resistivity constant $\mu$ be sufficient big, constants $a\in(0,{1\over2}]$, $\bar{a},k\in(0,1]$ and $\forall s\in\NN^+$.
Assume that $\textbf{f},\textbf{g}\in \CC^1((0,+\infty),H^s(\Omega))$ and $(\textbf{w},\textbf{b})^T\in \Bcal_R$. The solution $(\textbf{h},\textbf{q})^T$ of linearized coupled system  (\ref{AE3-3-y-1})-(\ref{AENAY3-1}) with the initial data (\ref{E3-2R0}) and condition (\ref{E3-2R1}) satisfies
$$
\aligned
&\sum_{i=1}^3\int_{\Omega}\Big(|\nabla_y^sh_i|^2+|\nabla_y^sq_i|^2\Big)dy\\
&\lesssim e^{-C_{R,a,\bar{a},k,\nu,\mu}\tau}\sum_{i=1}^3\Big[\int_{\Omega}\Big(|\nabla_y^sh_{0i}|^2+|\nabla_y^sq_{0i}|^2\Big)dy+\int_0^{+\infty}\int_{\Omega}\Big(|\nabla_y^sf_i|^2+|\nabla_y^sg_i|^2\Big)dyd\tau\Big],
\endaligned
$$ 
where $C_{R,a,\bar{a},k,\nu,\mu}$ is a constant depending on constants $R,a,\bar{a},k,\nu,\mu$, and $R$ is a sufficient small constant.
\end{lemma}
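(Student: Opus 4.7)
The plan is to mimic the energy method of Lemma 3.1, but applied to the $\nabla_y^s$-differentiated system (3.18)--(3.23), and then carefully collect the new constants that arise from the commutator terms $-3as(0,0,\partial_{y_3}^s h_i)^T$ and $\bigl(a(s\pm 1)-\tfrac{s}{2}\bigr)$, $\bigl(a(s\pm 2)-\tfrac{s}{2}\bigr)$ that replace $(a+\tfrac{3}{4})$, $(\tfrac{3}{4}-2a)$, etc.\ in the $L^2$ case. First I would multiply (3.18)--(3.20) by $\nabla_y^s h_1,\nabla_y^s h_2,\nabla_y^s h_3$ respectively and (3.21)--(3.23) by $\nabla_y^s q_1,\nabla_y^s q_2,\nabla_y^s q_3$, then integrate over $\Omega$ and sum. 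The Laplacian terms give the dissipation $\nu\|\nabla\nabla_y^s h\|_{L^2}^2+\mu\|\nabla\nabla_y^s q\|_{L^2}^2$, and the drift $-\tfrac{y}{2}\cdot\nabla_y$ together with $ay_1\partial_{y_1}+ay_2\partial_{y_2}-2ay_3\partial_{y_3}$ integrate by parts (using the boundary condition (3.17)) to produce the fixed constant coefficient $\tfrac{3}{4}$ in front of every $\|\nabla_y^s h_i\|_{L^2}^2$, $\|\nabla_y^s q_i\|_{L^2}^2$ after combining with the $a$-dependent contribution, just as in Lemma 3.1.

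Next I would handle the pressure-type contribution. For the $h$-equations the forcing $\tilde f_i$ contains the term $(\overline{T}^*)^{1/2}\partial_{y_i}\nabla_y^s\overline{f}$; after integration by parts this generates $\int (\sum_i\partial_{y_i}\nabla_y^s h_i)\,\nabla_y^s\overline{f}\,dy$, which vanishes by $\nabla_y\cdot\textbf{h}=0$, exactly as in (3.44). The remaining pieces of $\tilde f_i$ and $\tilde g_i$ consist of (a) the smooth forcings $\overline{T}^* e^{-\tau}\nabla_y^s f_i$, $\overline{T}^* e^{-\tau}\nabla_y^s g_i$, controlled trivially by Young's inequality, and (b) Leibniz commutators between $\nabla_y^s$ and products with $\textbf{w},\textbf{b}$. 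For the latter I would invoke the standard Moser product/commutator estimates in $H^s(\Omega)$ together with the Sobolev embedding $H^{s+3}(\Omega)\hookrightarrow L^\infty(\Omega)$ and the bound $\|\textbf{w}\|_{\mathcal{C}_1^{s+3}}+\|\textbf{b}\|_{\mathcal{C}_1^{s+3}}\le R\ll 1$ coming from $\mathcal{B}_R$; this yields a quadratic form of $C_R\bigl(\|\nabla_y^s h\|_{L^2}^2+\|\nabla_y^s q\|_{L^2}^2+\|\nabla\nabla_y^s h\|_{L^2}^2+\|\nabla\nabla_y^s q\|_{L^2}^2\bigr)$ with $C_R\to 0$ as $R\to 0$. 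All the explicitly $\tau$-dependent exponential prefactors $e^{-\tau/2}$, $e^{-(2a+1)\tau}$, $e^{-(2a+1/2)\tau}$, $e^{-(2a+3/2)\tau}$ are bounded by $1$ for $\tau\ge 0$ and in fact help; they do not require separate care.

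I would then collect all contributions into an inequality of the form
\begin{align*}
\tfrac{1}{2}\tfrac{d}{d\tau}\sum_{i=1}^3\Bigl(\|\nabla_y^s h_i\|_{L^2}^2+\|\nabla_y^s q_i\|_{L^2}^2\Bigr)
&+\bigl(3\nu-C_R\bigr)\sum_{i,j}\|\partial_{y_j}\nabla_y^s h_i\|_{L^2}^2
+\bigl(3\mu-C_R\bigr)\sum_{i,j}\|\partial_{y_j}\nabla_y^s q_i\|_{L^2}^2\\
&+\sum_{i=1}^3\bigl(\alpha_i^{(s)}\|\nabla_y^s h_i\|_{L^2}^2+\beta_i^{(s)}\|\nabla_y^s q_i\|_{L^2}^2\bigr)
\lesssim\sum_{i=1}^3\bigl(\|\nabla_y^s f_i\|_{L^2}^2+\|\nabla_y^s g_i\|_{L^2}^2\bigr),
\end{align*}
where $\alpha_i^{(s)},\beta_i^{(s)}$ are the explicit, possibly negative, constants produced by the $s$-differentiation (such as $a(s-2)-\tfrac{s}{2}+\tfrac{3}{4}$ for $h_3$, $a(s+2)-\tfrac{s}{2}+\tfrac{3}{4}$ for $q_3$, etc., together with the $-3as$ correction and contributions of $C_R$). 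Then I would invoke Poincaré's inequality on $\Omega=[0,1]^3$ to trade a fraction of the gradient dissipation for $\|\nabla_y^s h_i\|_{L^2}^2$, $\|\nabla_y^s q_i\|_{L^2}^2$ respectively, and choose $\nu,\mu$ sufficiently large (and $R$ sufficiently small) so that every resulting coefficient is strictly positive; this produces a single positive constant $C_{R,a,\bar a,k,\nu,\mu}$. A Gronwall argument then delivers the claimed exponential decay.

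The main obstacle I anticipate is the sign analysis of the $s$-dependent zeroth-order coefficients: since $a\in(0,\tfrac{1}{2}]$ the terms $a(s-2)-\tfrac{s}{2}$ and $a(s+2)-\tfrac{s}{2}$ are negative for $s\geq 2$ and grow linearly in $s$, so they cannot be beaten pointwise and must be absorbed into $\nu,\mu$ via Poincaré. A secondary bookkeeping nuisance is the very large number of lower-order terms generated by $\nabla_y^s$ acting on $ay_i\partial_{y_i}$, $y_j y_3$, and the pressure coupling $(\Fcal_{\textbf{w}} p)\textbf{h}+(\Fcal_{\textbf{b}} p)\textbf{q}$; each must be paired with either a dissipative term or a decaying exponential prefactor. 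Apart from this, the argument is a direct and slightly heavier copy of the calculation carried out in Lemma 3.1.
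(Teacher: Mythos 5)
Your proposal follows essentially the same route as the paper's proof: take the inner product of the $\nabla_y^s$-differentiated system with $\nabla_y^s h_i$, $\nabla_y^s q_i$, sum, control the commutator and coupling terms by Young's inequality and the $\Bcal_R$ smallness, absorb the negative $s$-dependent zeroth-order coefficients (such as $a(s-2)-\tfrac{s}{2}$) into the dissipation via Poincar\'e on $[0,1]^3$ by taking $\nu,\mu$ large, and conclude with Gronwall. The one point where you diverge is the pressure contribution $\sum_i\int\nabla_y^s h_i\cdot\del_{y_i}\nabla_y^s\overline{f}\,dy$: you propose to integrate by parts and kill it outright using $\nabla_y\cdot\textbf{h}=0$ as in the $L^2$ case, whereas the paper instead bounds it by $C_R\sum_i\|\nabla_y^s h_i\|_{\LL^2}^2$ via Calder\'on--Zygmund estimates for $\triangle_y^{-1}$. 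Your cancellation is cleaner when it applies, but on the cube the boundary terms produced by that integration by parts involve $\del_{y_j}^s h_i$ restricted to $\del\Omega$, which the Dirichlet condition $h_i|_{\del\Omega}=0$ does not annihilate, so you would still need an estimate of the paper's type (or a trace argument) to close that step; this is a matter of bookkeeping rather than a different method.
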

\begin{proof}
We take the inner product to both sides of (\ref{AAE3-3-y-1})-(\ref{AAE3-5-y-3}) by $\nabla_y^sh_1$, $\nabla_y^sh_2$ and $\nabla_y^sh_3$, respectively, then integrating by parts, it holds
\bel{E3-6RR}
\aligned
&{1\over 2}{d\over d\tau}\|\nabla_y^sh_1\|_{\LL^2(\Omega)}^2+\nu\sum_{i,j=1}^3\|\del_{y_i} \nabla_y^sh_j\|^2_{\LL^2(\Omega)}+\Big(a(1+s)-{s\over2}+{3\over4}\Big)\|\nabla_y^sh_1\|^2_{\LL^2(\Omega)}
\\
&\quad+k(\overline{T}^*)^{2a+1}e^{-(2a+1)\tau}\int_{\Omega}\nabla_y^sh_1\cdot\nabla_y^sh_2dy
-3as\int_{\Omega}\Big(0,0,\del_{y_3}^sh_1\Big)^T\cdot\nabla_y^sh_1dy\\
&\quad+ks(\overline{T}^*)^{2a+1}e^{-(2a+1)\tau}\int_{\Omega}\Big(\del_{y_2}\del_{y_1}^{s-1}h_1,\del_{y_1}\del_{y_2}^{s-1}h_1,0\Big)^T\cdot\nabla_y^sh_1dy=\int_{\Omega}\nabla_y^sh_1\cdot\tilde{f}_1dy,
\endaligned
\ee
\bel{E3-7RR}
\aligned
&{1\over 2}{d\over d\tau}\|\nabla_y^sh_2\|_{\LL^2(\Omega)}^2+\nu\sum_{i,j=1}^3\|\del_{y_i} \nabla_y^sh_j\|^2_{\LL^2(\Omega)}+\Big(a(1+s)-{s\over2}+{3\over4}\Big)\|\nabla_y^sh_2\|^2_{\LL^2(\Omega)}
\\
&\quad-k(\overline{T}^*)^{2a+1}e^{-(2a+1)\tau}\int_{\Omega}\nabla_y^sh_1\cdot\nabla_y^sh_2dy
-3as\int_{\Omega}\Big(0,0,\del_{y_3}^sh_2\Big)^T\cdot\nabla_y^sh_2dy\\
&\quad+ks(\overline{T}^*)^{2a+1}e^{-(2a+1)\tau}\int_{\Omega}\Big(-\del_{y_2}\del_{y_1}^{s-1}h_2,\del_{y_1}\del_{y_2}^{s-1}h_2,0\Big)^T\cdot\nabla_y^sh_2dy=\int_{\Omega}\nabla_y^sh_2\cdot\tilde{f}_2dy,
\endaligned
\ee
and
\bel{E3-8RR}
\aligned
&{1\over 2}{d\over d\tau}\|\nabla_y^sh_3\|_{\LL^2(\Omega)}^2+\nu\sum_{i,j=1}^3\|\del_{y_i} \nabla_y^sh_j\|^2_{\LL^2(\Omega)}+\Big(a(s-2)-{s\over2}+{3\over4}\Big)\|\nabla_y^sh_3\|^2_{\LL^2(\Omega)}
\\
&\quad
-3as\int_{\Omega}\Big(0,0,\del_{y_3}^sh_3\Big)^T\cdot\nabla_y^sh_3dy
+ks(\overline{T}^*)^{2a+1}e^{-(2a+1)\tau}\int_{\Omega}\Big(-\del_{y_2}\del_{y_1}^{s-1}h_3,\del_{y_1}\del_{y_2}^{s-1}h_3,0\Big)^T\cdot\nabla_y^sh_3dy\\
&=\int_{\Omega}\nabla_y^sh_3\cdot\tilde{f}_3dy,
\endaligned
\ee

Similarly, we take the inner product to both sides of (\ref{AAENA3-3})-(\ref{AAENAY3-1}) by $\nabla_y^sq_1$, $\nabla_y^sq_2$ and $\nabla_y^sq_3$, respectively, then we integrate by parts to get 
\bel{AEa3-6RR}
\aligned
&{1\over 2}{d\over d\tau}\|\nabla_y^sq_1\|_{\LL^2(\Omega)}^2+\mu\sum_{i,j=1}^3\|\del_{y_i} \nabla_y^sq_j\|^2_{\LL^2(\Omega)}+\Big(a(s-1)-{s\over2}+{3\over4}\Big)\|\nabla_y^sq_1\|^2_{\LL^2(\Omega)}
\\
&\quad-3as\int_{\Omega}\Big(0,0,\del_{y_3}^sq_1\Big)^T\cdot\nabla_y^sq_1dy
+\bar{a}\overline{T}^*e^{-\tau}\int_{\Omega}\nabla_y^{s}h_1\cdot\nabla_y^sq_1dy\\
&\quad-\bar{a}\Big[\int_{\Omega}\Big(y_1\del_{y_1}\nabla_y^{s}h_1+y_2\del_{y_1}\nabla_y^{s}h_2-2y_3\del_{y_1}\nabla_y^{s}h_3\Big)\cdot\nabla_y^sq_1dy
+s\int_{\Omega}\Big(\del_{y_1}^sh_1,\del_{y_1}^sh_2,-2\del_{y_1}^sh_3\Big)^T\cdot\nabla_y^sq_1dy\Big]\\
&\quad-k(\overline{T}^*)^{2a+1}e^{-(2a+1)\tau}\Big[\int_{\Omega}\nabla_y^{s}h_2\cdot\nabla_y^sq_1dy
+s\int_{\Omega}\Big(\del_{y_2}\del_{y_1}^{s-1}q_1,\del_{y_1}\del_{y_2}^{s-1}q_1,0\Big)^T\cdot\nabla_y^sq_1dy\Big]\\
&\quad+{2\bar{a}k(\overline{T}^*)^{2a+{1\over2}}\over 4a+1}e^{-(2a+{1\over2})\tau}\Big[\int_{\Omega}\Big(y_1y_3\del_{y_1}\nabla_y^sh_2-y_2y_3\del_{y_1}\nabla_y^sh_1-\overline{T}^*e^{-\tau}y_3\nabla_y^sh_2\Big)\cdot\nabla_y^sq_1dy\\
&\quad+s\int_{\Omega}\Big(y_3\del_{y_1}^sh_2,-y_3\del_{y_1}\del_{y_2}^{s-1}h_1,y_1\del_{y_1}\del_{y_3}^{s-1}h_2-y_2\del_{y_1}\del_{y_3}^{s-1}h_1-\overline{T}^*e^{-\tau}\del_{y_3}^{s-1}h_2\Big)^T\cdot\nabla_y^sq_1dy\Big]
\\
&=\int_{\Omega}\nabla_y^sq_1\cdot\tilde{g}_1dy,
\endaligned
\ee
\bel{AEa3-7RR}
\aligned
&{1\over 2}{d\over d\tau}\|\nabla_y^sq_2\|_{\LL^2(\Omega)}^2+\mu\sum_{i,j=1}^3\|\del_{y_i} \nabla_y^sq_j\|^2_{\LL^2(\Omega)}+\Big(a(s-1)-{s\over2}+{3\over4}\Big)\|\nabla_y^sq_2\|^2_{\LL^2(\Omega)}
\\
&\quad-3as\int_{\Omega}\Big(0,0,\del_{y_3}^sq_2\Big)^T\cdot\nabla_y^sq_2dy
+\bar{a}\overline{T}^*e^{-\tau}\int_{\Omega}\nabla_y^{s}h_2\cdot\nabla_y^sq_2dy\\
&\quad-\bar{a}\Big[\int_{\Omega}\Big(y_1\del_{y_2}\nabla_y^{s}h_1+y_2\del_{y_2}\nabla_y^{s}h_2-2y_3\del_{y_2}\nabla_y^{s}h_3\Big)\cdot\nabla_y^sq_2dy
+s\int_{\Omega}\Big(\del_{y_2}^sh_1,\del_{y_2}^sh_2,-2\del_{y_2}^sh_3\Big)^T\cdot\nabla_y^sq_2dy\Big]\\
&\quad+k(\overline{T}^*)^{2a+1}e^{-(2a+1)\tau}\Big[\int_{\Omega}\nabla_y^{s}h_1\cdot\nabla_y^sq_2dy
+s\int_{\Omega}\Big(\del_{y_2}\del_{y_1}^{s-1}q_2,-\del_{y_1}\del_{y_2}^{s-1}q_2,0\Big)^T\cdot\nabla_y^sq_2dy\Big]\\
&\quad+{2\bar{a}k(\overline{T}^*)^{2a+{1\over2}}\over 4a+1}e^{-(2a+{1\over2})\tau}\Big[\int_{\Omega}\Big(y_1y_3\del_{y_2}\nabla_y^sh_2-y_2y_3\del_{y_2}\nabla_y^sh_1+\overline{T}^*e^{-\tau}y_3\nabla_y^sh_1\Big)\cdot\nabla_y^sq_2dy\\
&\quad+s\int_{\Omega}\Big(y_3\del_{y_2}\del_{y_1}^{s-1}h_2,-y_3\del_{y_2}^{s}h_1,y_1\del_{y_2}\del_{y_3}^{s-1}h_2-y_2\del_{y_2}\del_{y_3}^{s-1}h_1+\overline{T}^*e^{-\tau}\del_{y_3}^{s-1}h_1\Big)^T\cdot\nabla_y^sq_2dy\Big]
\\
&=\int_{\Omega}\nabla_y^sq_1\cdot\tilde{g}_1dy,
\endaligned
\ee
and
\bel{AEa3-8RR}
\aligned
&{1\over 2}{d\over d\tau}\|\nabla_y^sq_3\|_{\LL^2(\Omega)}^2+\mu\sum_{i,j=1}^3\|\del_{y_i} \nabla_y^sq_j\|^2_{\LL^2(\Omega)}+\Big(a(s+2)-{s\over2}+{3\over4}\Big)\|\nabla_y^sq_3\|^2_{\LL^2(\Omega)}
\\
&\quad-3as\int_{\Omega}\Big(0,0,\del_{y_3}^sq_3\Big)^T\cdot\nabla_y^sq_3dy
-2\bar{a}\overline{T}^*e^{-\tau}\int_{\Omega}\nabla_y^{s}h_3\cdot\nabla_y^sq_3dy\\
&\quad-\bar{a}\Big[\int_{\Omega}\Big(y_1\del_{y_3}\nabla_y^{s}h_1+y_2\del_{y_3}\nabla_y^{s}h_2-2y_3\del_{y_3}\nabla_y^{s}h_3\Big)\cdot\nabla_y^sq_3dy
+s\int_{\Omega}\Big(\del_{y_3}^sh_1,\del_{y_3}^sh_2,-2\del_{y_3}^sh_3\Big)^T\cdot\nabla_y^sq_3dy\Big]\\
&\quad+ks(\overline{T}^*)^{2a+1}e^{-(2a+1)\tau}
\int_{\Omega}\Big(-\del_{y_2}\del_{y_1}^{s-1}q_3,\del_{y_1}\del_{y_2}^{s-1}q_3,0\Big)^T\cdot\nabla_y^sq_3dy\\
&\quad+{2\bar{a}k(\overline{T}^*)^{2a+{1\over2}}\over 4a+1}e^{-(2a+{1\over2})\tau}\Big[\int_{\Omega}\Big(y_1y_3\del_{y_3}\nabla_y^sh_2-y_2y_3\del_{y_3}\nabla_y^sh_1+\overline{T}^*e^{-\tau}(y_2\nabla_y^sh_1-y_1\nabla_y^sh_2)\Big)\cdot\nabla_y^sq_3dy\\
&\quad+s\int_{\Omega}\Big(y_3\del_{y_3}\del_{y_1}^{s-1}h_2,-y_3\del_{y_3}\del_{y_2}^{s-1}h_1,\del_{y_3}^{s}h_2-y_2\del_{y_3}^{s}h_1+\overline{T}^*e^{-\tau}\del_{y_3}^{s-1}h_1\Big)^T\cdot\nabla_y^sq_3dy\\
&\quad+\overline{T}^*e^{-\tau}\Big(-\del_{y_1}^{s-1}h_2,\del_{y_2}^{s-1}h_1\Big)^T\cdot\nabla_y^sq_3\Big]
\\
&=\int_{\Omega}\nabla_y^sq_3\cdot\tilde{g}_3dy.
\endaligned
\ee

We sum up (\ref{E3-6RR})-(\ref{E3-8RR}) to get
\bel{E3-18}
\aligned
&{1\over 2}\sum_{i=1}^3{d\over d\tau}\Big(\|\nabla_y^sh_i\|_{\LL^2(\Omega)}^2+\|\nabla_y^sq_i\|_{\LL^2(\Omega)}^2\Big)+\sum_{i,j=1}^3\Big(3\nu\|\del_{y_i} \nabla_y^sh_j\|^2_{\LL^2(\Omega)}+3\mu\|\del_{y_i} \nabla_y^sq_j\|^2_{\LL^2(\Omega)}\Big)\\
&\quad+\Big(a(1+s)-{s\over2}+{3\over4}\Big)\sum_{i=1}^2\|\nabla_y^sh_i\|^2_{\LL^2(\Omega)}+\Big(a(s-2)-{s\over2}+{3\over4}\Big)\|\nabla_y^sh_3\|^2_{\LL^2(\Omega)}\\
&\quad+\Big(a(s-1)-{s\over2}+{3\over4}\Big)\sum_{i=1}^2\|\nabla_y^sq_i\|^2_{\LL^2(\Omega)}+\Big(a(s+2)-{s\over2}+{3\over4}\Big)\|\nabla_y^sq_3\|^2_{\LL^2(\Omega)}\\
&\quad+I_1+I_2+I_3
=\sum_{i=1}^3\int_{\Omega}\Big(\nabla_y^sh_i\cdot\tilde{f}_i+\nabla_y^sq_i\cdot\tilde{g}_i\Big)dy,
\endaligned
\ee
where coupled terms take the following form
\bel{YAA1-1}
\aligned
I_1&:=-3as\sum_{i=1}^3\int_{\Omega}\Big(0,0,\del_{y_3}^sh_i\Big)^T\cdot\nabla_y^sh_idy\\
&\quad+ks(\overline{T}^*)^{2a+1}e^{-(2a+1)\tau}\sum_{i=1}^3\int_{\Omega}\Big(\del_{y_2}\del_{y_1}^{s-1}h_i,\del_{y_1}\del_{y_2}^{s-1}h_i,0\Big)^T\cdot\nabla_y^sh_idy\\
&\quad-\bar{a}\sum_{i=1}^3\Big[\int_{\Omega}\Big(y_1\del_{y_i}\nabla_y^{s}h_1+y_2\del_{y_i}\nabla_y^{s}h_2-2y_3\del_{y_i}\nabla_y^{s}h_3\Big)\cdot\nabla_y^sq_idy\\
&\quad+s\int_{\Omega}\Big(\del_{y_i}^sh_1,\del_{y_i}^sh_2,-2\del_{y_i}^sh_3\Big)^T\cdot\nabla_y^sq_idy\Big].
\endaligned
\ee

\bel{YAA1-2}
\aligned
I_2&:=-k(\overline{T}^*)^{2a+1}e^{-(2a+1)\tau}\Big[\int_{\Omega}\Big(\nabla_y^{s}h_2\cdot\nabla_y^sq_1dy
-\nabla_y^{s}h_1\cdot\nabla_y^sq_2\Big)dy\\
&\quad+s\int_{\Omega}\Big(\del_{y_2}\del_{y_1}^{s-1}q_1,\del_{y_1}\del_{y_2}^{s-1}q_1,0\Big)^T\cdot\nabla_y^sq_1dy
-s\int_{\Omega}\Big(\del_{y_2}\del_{y_1}^{s-1}q_2,-\del_{y_1}\del_{y_2}^{s-1}q_2,0\Big)^T\cdot\nabla_y^sq_2dy\\
&\quad+s\int_{\Omega}\Big(\del_{y_3}^sh_1,\del_{y_3}^sh_2,-2\del_{y_3}^sh_3\Big)^T\cdot\nabla_y^sq_3dy\Big],
\endaligned
\ee

\bel{YAA1-3}
\aligned
I_3&:={2\bar{a}k(\overline{T}^*)^{2a+{1\over2}}\over 4a+1}e^{-(2a+{1\over2})\tau}\Big[\sum_{i=1}^3\int_{\Omega}\Big(y_1y_3\del_{y_i}\nabla_y^sh_2-y_2y_3\del_{y_i}\nabla_y^sh_1\Big)\cdot\nabla_y^sq_idy\\
&\quad+\overline{T}^*e^{-{1\over2}\tau}\int_{\Omega}\Big(-y_3\nabla_y^sh_2\cdot\nabla_y^sq_1+y_3\nabla_y^sh_1\cdot\nabla_y^sq_2
+(y_2\nabla_y^sh_1-y_1\nabla_y^sh_2)\cdot\nabla_y^sq_3\Big)dy\\
&\quad+\int_{\Omega}\Big(y_3\del_{y_1}^sh_2,-y_3\del_{y_1}\del_{y_2}^{s-1}h_1,y_1\del_{y_1}\del_{y_3}^{s-1}h_2-y_2\del_{y_1}\del_{y_3}^{s-1}h_1-\overline{T}^*e^{-\tau}\del_{y_3}^{s-1}h_2\Big)^T\cdot\nabla_y^sq_1dy\\
&\quad+s\int_{\Omega}\Big(y_3\del_{y_2}\del_{y_1}^{s-1}h_2,-y_3\del_{y_2}^{s}h_1,y_1\del_{y_2}\del_{y_3}^{s-1}h_2-y_2\del_{y_2}\del_{y_3}^{s-1}h_1+\overline{T}^*e^{-\tau}\del_{y_3}^{s-1}h_1\Big)^T\cdot\nabla_y^sq_2dy\\
&\quad+s\int_{\Omega}\Big(y_3\del_{y_3}\del_{y_1}^{s-1}h_2,-y_3\del_{y_3}\del_{y_2}^{s-1}h_1,\del_{y_3}^{s}h_2-y_2\del_{y_3}^{s}h_1+\overline{T}^*e^{-\tau}\del_{y_3}^{s-1}h_1\Big)^T\cdot\nabla_y^sq_3dy\\
&\quad+\overline{T}^*e^{-\tau}\Big(-\del_{y_1}^{s-1}h_2,\del_{y_2}^{s-1}h_1\Big)^T\cdot\nabla_y^sq_3\Big].
\endaligned
\ee

We now estimate each coupled term in (\ref{E3-18}). On one hand,
note that $y\in\Omega=([0,1])^3$ and $(\textbf{w},\textbf{b})^T\in \Bcal_R$. We employ Young's inequality and Poincar\'{e} inequality, and integrating by parts to derive
\bel{E3-19}
\aligned
&\Big|I_1\Big|\lesssim C_{a,k,\bar{a}}\sum_{i=1}^3\int_{\Omega}\Big(|\nabla_y^sh_i|^2+|\del_{y_i}\nabla_y^{s}h_i|^2+|\del_{y_i}\nabla_y^{s-1}h_i|^2+|\nabla_y^sq_i|^2\Big)dy,\\
&\quad~~\lesssim C_{a,k,\bar{a}}\sum_{i=1}^3\int_{\Omega}\Big(|\nabla_y^sh_i|^2+|\del_{y_i}\nabla_y^{s}h_i|^2+|\nabla_y^sq_i|^2\Big)dy,\\
&\Big|I_2\Big|\lesssim C_{k}\Big[\sum_{i=1}^3\int_{\Omega}\Big(|\nabla_y^sh_i|^2+|\nabla_y^sq_i|^2\Big)dy
+\sum_{j=1}^2\int_{\Omega}|\del_{y_j}\nabla_y^{s}q_j|^2dy\Big],\\
&\Big|I_3\Big|\lesssim C_{a,k,\bar{a}}\Big[\sum_{i=1}^3\int_{\Omega}\Big(|\nabla_y^sh_i|^2+|\nabla_y^sq_i|^2\Big)dy
+\sum_{i=1}^3\sum_{j=1}^2\int_{\Omega}|\del_{y_i}\nabla_y^{s}h_j|^2dy\Big],
\endaligned
\ee
where $C_{a,k,\bar{a}}$ and $C_{k}$ denote postive constants depending on $a$, $k$ and $\bar{a}$.

On the other hand, by (\ref{AE3-5RR0})-(\ref{AE3-5RR2}), we know the highest order derivatives on $h_i$ of $\del_{y_i}\nabla_y^s\overline{f}$ is $s$. So
 we can use the standard Calder\'on-Zygmund theory, Young's inequality,  $H^{s}(\Omega)\subset L^{\infty}(\Omega)$ $(s\geq2)$ and integrating by parts to derive
\bel{E3-22}
\Big|\sum_{i=1}^3\int_{\Omega}\nabla_y^sh_i\cdot\del_{y_i}\nabla_y^s\overline{f}dy\Big|\lesssim C_{R}\sum_{i=1}^3\|\nabla^s_yh_i\|^2_{\LL^2(\Omega)},
\ee
furthermore, by (\ref{AE3-5RR0})-(\ref{AAAE3-5RR0}) and Poincar\'{e} inequality, we derive
\bel{E3-22R-0}
\Big|\sum_{i=1}^3\int_{\Omega}\Big(\nabla_y^sh_i\cdot\tilde{f}_i\Big)dy\Big|\lesssim (C_{R}+{a\over2})\sum_{i=1}^3\|\nabla^s_yh_i\|^2_{\LL^2(\Omega)}
+2a^{-1}\sum_{i=1}^3\|\nabla_y^{s}f_i\|_{\LL^2(\Omega)}^2,
\ee
where $C_{R}$ is a postive constant depending on $R$, which is small constant as $R$ small.

Similarly to get (\ref{E3-22R-0}), by (\ref{AAAE3-5RR0})-(\ref{AAAE3-5RR2}), we can also use Poincar\'{e} inequality and Young inequality to derive
\bel{E3-22R-11}
\Big|\sum_{i=1}^3\int_{\Omega}\Big(\nabla_y^sq_i\cdot\tilde{g}_i\Big)dy\Big|\lesssim (C_{R}+{a\over2})\sum_{i=1}^3\|\nabla^s_yq_i\|^2_{\LL^2(\Omega)}
+2a^{-1}\sum_{i=1}^3\|\nabla_y^{s}g_i\|_{\LL^2(\Omega)}^2.
\ee

Hence we can apply estimates (\ref{E3-19})-(\ref{E3-22R-11}) to (\ref{E3-18}), it holds
\bel{E3-23}
\aligned
&{1\over 2}\sum_{i=1}^3{d\over d\tau}\Big(\|\nabla_y^sh_i\|_{\LL^2(\Omega)}^2+\|\nabla_y^sq_i\|_{\LL^2(\Omega)}^2\Big)+\Big(3\nu-C_{a,k,\bar{a}}\Big)\sum_{i,j=1}^3\|\del_{y_i} \nabla_y^sh_j\|^2_{\LL^2(\Omega)}\\
&\quad+\Big(3\mu-C_{a,k,\bar{a}}\Big)\sum_{i,j=1}^3\|\del_{y_i} \nabla_y^sq_j\|^2_{\LL^2(\Omega)}+\Big(a({1\over2}+s)-{s\over2}+{3\over4}-C_{a,k,\bar{a},R}\Big)\sum_{i=1}^2\|\nabla_y^sh_i\|^2_{\LL^2(\Omega)}\\
&\quad+\Big(a(s-{5\over2})-{s\over2}+{3\over4}-C_{a,k,\bar{a},R}\Big)\|\nabla_y^sh_3\|^2_{\LL^2(\Omega)}+\Big(a(s-{3\over2})-{s\over2}+{3\over4}-C_{a,k,\bar{a},R}\Big)\sum_{i=1}^2\|\nabla_y^sq_i\|^2_{\LL^2(\Omega)}\\
&\quad+\Big(a(s+{3\over2})-{s\over2}+{3\over4}-C_{a,k,\bar{a},R}\Big)\|\nabla_y^sq_3\|^2_{\LL^2(\Omega)}\\
&\lesssim 2a^{-1}\sum_{i=1}^3\Big(\|\nabla_y^{s}f_i\|_{\LL^2(\Omega)}^2+\|\nabla_y^{s}g_i\|_{\LL^2(\Omega)}^2\Big),
\endaligned
\ee
where $C_{a,k,\bar{a},R}$ is a postive constant depending on constants $a,k,\bar{a},R$.

Furthermore, by Poincar\'{e} inequality, it holds
$$
\aligned
&\|\del_{y_i} \nabla_y^sh_j\|^2_{\LL^2(\Omega)}\gtrsim \| \nabla_y^sh_j\|^2_{\LL^2(\Omega)},\\
&\|\del_{y_i} \nabla_y^sq_j\|^2_{\LL^2(\Omega)}\gtrsim \| \nabla_y^sq_j\|^2_{\LL^2(\Omega)},
\endaligned
$$
which combining with (\ref{E3-23}) gives that
\bel{AAE3-23}
\aligned
{1\over 2}\sum_{i=1}^3{d\over d\tau}\Big(\|\nabla_y^sh_i\|_{\LL^2(\Omega)}^2+\|\nabla_y^sq_i\|_{\LL^2(\Omega)}^2\Big)
&+\Big(3\nu+{a\over2}+(a-{1\over2})s+{3\over4}-C_{a,k,\bar{a},R}\Big)\sum_{i=1}^2\|\nabla_y^sh_i\|^2_{\LL^2(\Omega)}\\
&\quad+\Big(3\nu-{5a\over2}+(a-{1\over2})s+{3\over4}-C_{a,k,\bar{a},R}\Big)\|\nabla_y^sh_3\|^2_{\LL^2(\Omega)}\\
&\quad+\Big(3\mu-{3a\over2}+(a-{1\over2})s+{3\over4}-C_{a,k,\bar{a},R}\Big)\sum_{i=1}^2\|\nabla_y^sq_i\|^2_{\LL^2(\Omega)}\\
&\quad+\Big(3\mu+{3a\over2}+(a-{1\over2})s+{3\over4}-C_{a,k,\bar{a},R}\Big)\|\nabla_y^sq_3\|^2_{\LL^2(\Omega)}\\
&\lesssim 2a^{-1}\sum_{i=1}^3\Big(\|\nabla_y^{s}f_i\|_{\LL^2(\Omega)}^2+\|\nabla_y^{s}g_i\|_{\LL^2(\Omega)}^2\Big),
\endaligned
\ee

It is easy to see that for sufficient small constant $R$, $\bar{a},k\in(0,1]$ and sufficient big constants $\mu,\nu$, we can choose a fixed constant $a\in(0,{1\over2})$ and any fixed integer $s$, it holds
$$
\aligned
&3\nu+{a\over2}+(a-{1\over2})s+{3\over4}-C_{a,k,\bar{a},R}>0,\\
&3\nu-{5a\over2}+(a-{1\over2})s+{3\over4}-C_{a,k,\bar{a},R}>0,\\
&3\mu-{3a\over2}+(a-{1\over2})s+{3\over4}-C_{a,k,\bar{a},R}>0,\\
&3\mu+{3a\over2}+(a-{1\over2})s+{3\over4}-C_{a,k,\bar{a},R}>0.
\endaligned
$$

Therefore, applying Gronwall's inequality to (\ref{AAE3-23}), there exists a positive constant $C_{R,a,\bar{a},k,\nu,\mu}$ depending on $R,a,\bar{a},k,\nu,\mu$ such that
$$
\aligned
\sum_{i=1}^3\Big(\|\nabla_yh_i\|_{\LL^2(\Omega)}^2+\|\nabla_yq_i\|_{\LL^2(\Omega)}^2\Big)&\lesssim 
e^{-C_{R,a,\bar{a},k,\nu,\mu}\tau}\sum_{i=1}^3\Big[\|\nabla_yh_i(0,(\overline{T}^*)^{{1\over2}}y)\|_{\LL^2(\Omega)}^2
+|\nabla_yq_i(0,(\overline{T}^*)^{{1\over2}}y)\|_{\LL^2(\Omega)}^2\\
&\quad+\int_0^{+\infty}\Big(\|\nabla_yf_i\|^2_{\LL^2(\Omega)}+\|\nabla_yg_i\|^2_{\LL^2(\Omega)}\Big)d\tau\Big],\quad \forall \tau>0.
\endaligned
$$

\end{proof}

Similar to get the estimate in Lemma 3.2, we apply the operator $\del_{\tau}$ to both sides of (\ref{AAE3-3-y-1})-(\ref{AAENAY3-1}), then using the similar process of proof in Lemma 3.2 , we can obtain the following result. Here we omit the details.

\begin{lemma}
Let viscosity constant $\nu$ and resistivity constant $\mu$ be sufficient big, constants $a\in(0,{1\over2}]$, $\bar{a},k\in(0,1]$ and $\forall s\in\NN^+$.
Assume that $\textbf{f},\textbf{g}\in \CC^1((0,+\infty),H^s(\Omega))$ and $(\textbf{w},\textbf{b})^T\in \Bcal_R$. The solution $(\textbf{h},\textbf{q})^T$ of linearized coupled system  (\ref{AE3-3-y-1})-(\ref{AENAY3-1}) with the initial data (\ref{E3-2R0}) and condition (\ref{E3-2R1}) satisfies
$$
\aligned
&\sum_{i=1}^3\int_{\Omega}\Big(|\nabla_y^s\del_{\tau}h_i|^2+|\nabla_y^s\del_{\tau}q_i|^2\Big)dy\\
&\lesssim e^{-C_{R,a,\bar{a},k,\nu,\mu}\tau}\sum_{i=1}^3\Big[\int_{\Omega}\Big(|\nabla_y^s\del_{\tau}h_{0i}|^2+|\nabla_y^s\del_{\tau}q_{0i}|^2\Big)dy+\int_0^{+\infty}\int_{\Omega}\Big(|\nabla_y^s\del_{\tau}f_i|^2+|\nabla_y^s\del_{\tau}g_i|^2\Big)dyd\tau\Big],
\endaligned
$$ 
where $C_{R,a,\bar{a},k,\nu,\mu}$ is a positive constant depending on constants $R,a,\bar{a},k,\nu,\mu$, and $R$ is a sufficient small constant.
\end{lemma}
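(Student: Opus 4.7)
The plan is to differentiate the full linearized system (\ref{AAE3-3-y-1})--(\ref{AAENAY3-1}) in $\tau$ and then replay the $H^s$ energy estimate of Lemma 3.2 on the new unknowns $H_i := \partial_\tau h_i$ and $Q_i := \partial_\tau q_i$. Applying $\partial_\tau$ to the equations, the principal part for $H_i$, namely $\partial_\tau H_i - \nu \triangle_y H_i - \tfrac{y}{2}\cdot\nabla_y H_i + a y_1 \partial_{y_1} H_i + a y_2 \partial_{y_2} H_i - 2 a y_3 \partial_{y_3} H_i$, and the analogous expression for $Q_i$ with $\mu$ in place of $\nu$, are structurally identical to those appearing in Lemma 3.2. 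The new sources are of three types: (i) derivatives falling on the $\tau$-dependent coefficients $e^{-\alpha\tau}$ (with $\alpha \in \{1,\, \tfrac{1}{2},\, 2a+1,\, 2a+\tfrac{1}{2},\, 2a+\tfrac{3}{2}\}$), which merely pull down a numerical constant while preserving the exponential decay; (ii) derivatives falling on the background fields $w_i, b_i$, controlled via $\|\partial_\tau w_i\|_{H^{s+2}}, \|\partial_\tau b_i\|_{H^{s+2}} \lesssim R$ guaranteed by $(\textbf{w},\textbf{b}) \in \Bcal_R$ and the definition of $\Ccal_1^{s+3}$; and (iii) the forcing terms $\partial_\tau \tilde f_i$ and $\partial_\tau \tilde g_i$.

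Next, I would take the $\LL^2(\Omega)$ inner product of the differentiated equations against $\nabla_y^s H_i$ and $\nabla_y^s Q_i$ and integrate by parts, exactly as in the derivation of (\ref{E3-18}). The dissipation yields the positive contributions $\nu \sum_{i,j}\|\partial_{y_i}\nabla_y^s H_j\|_{\LL^2}^2$ and $\mu \sum_{i,j}\|\partial_{y_i}\nabla_y^s Q_j\|_{\LL^2}^2$, while the self-similar drift produces the same zero-order coefficients $a(1+s)-\tfrac{s}{2}+\tfrac{3}{4}$, $a(s-2)-\tfrac{s}{2}+\tfrac{3}{4}$, $a(s-1)-\tfrac{s}{2}+\tfrac{3}{4}$, $a(s+2)-\tfrac{s}{2}+\tfrac{3}{4}$ that appeared in Lemma 3.2. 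All coupling terms of the form $e^{-\alpha\tau} \nabla_y^s H_i \cdot (\cdots)$ and the $(\textbf{w},\textbf{b})$-dependent commutators arising from (i)--(ii) are absorbed by Young's inequality combined with the smallness of $R$ and the factor $e^{-\alpha\tau}$; for (iii) I would apply the standard Calder\'on--Zygmund bound on $\triangle^{-1}$ exactly as in (\ref{E3-22})--(\ref{E3-22R-11}) to control the pressure-type piece hidden in $\partial_\tau \tilde f_i$.

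The main subtlety will be handling the commutator terms generated when $\partial_\tau$ falls on the exponential coefficients in, e.g., (\ref{AAENA3-3}): these produce zero-order contributions proportional to $\nabla_y^s h_i$ and $\nabla_y^s q_i$ (with no outer $\partial_\tau$), which must be moved to the forcing side and bounded using the decay estimate already established in Lemma 3.2. Because that estimate gives
$$
\sum_{i=1}^3 \left( \|\nabla_y^s h_i\|_{\LL^2}^2 + \|\nabla_y^s q_i\|_{\LL^2}^2 \right) \lesssim e^{-C_{R,a,\bar a,k,\nu,\mu}\tau}\Big[ \textrm{data} + \textrm{forcing}\Big],
$$
these commutator contributions are integrable in $\tau$ and therefore harmless. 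Choosing $\nu,\mu$ sufficiently large and $R$ sufficiently small so that the effective zero-order coefficient matrix is strictly positive (the same four inequalities displayed just after (\ref{AAE3-23})), and finally invoking Gronwall's inequality, yields the claimed exponential decay with a constant $C_{R,a,\bar a,k,\nu,\mu} > 0$, after which one closes the argument by absorbing the lower-order coupling with Lemma 3.2 into a single, possibly smaller, positive decay rate.
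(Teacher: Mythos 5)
Your proposal follows exactly the route the paper takes: the paper's own ``proof'' of this lemma consists solely of the remark that one applies $\del_{\tau}$ to (\ref{AAE3-3-y-1})--(\ref{AAENAY3-1}) and repeats the energy argument of Lemma 3.2, with all details omitted. Your write-up supplies precisely the details the paper skips --- in particular the correct observation that the commutators produced when $\del_{\tau}$ hits the exponential coefficients leave behind undifferentiated terms $\nabla_y^s h_i,\nabla_y^s q_i$ that must be absorbed using the decay already proved in Lemma 3.2, and that $\del_{\tau}\textbf{w},\del_{\tau}\textbf{b}$ are controlled by the $\Ccal_1^{s+3}$ norm built into $\Bcal_R$ --- so the approach is the same and the argument is sound.
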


\subsection{Global existence of solutions for the linear system in self-similarity coordinates}

\begin{proposition}

Let viscosity constant $\nu$ and resistivity constant $\mu$ be sufficient big, constants $a\in(0,{1\over2}]$, $\bar{a},k\in(0,1]$ and $\forall s\in\NN^+$.
Assume that $\textbf{f},\textbf{g}\in \CC^1((0,+\infty),H^s(\Omega))$ and $(\textbf{w},\textbf{b})^T\in \Bcal_R$. Then
the linear problem (\ref{E3-2})-(\ref{E3-2rr1}) with the initial data (\ref{E3-2R0}) and boundary condition (\ref{E3-2R1})
admits a solution 
$$
(\textbf{h}(\tau,y),\textbf{q}(\tau,y))\in\bigcap_{i= 0}^1\CC^i([0,+\infty);H^{s-i}(\Omega)\times H^{s-i}(\Omega)),
$$
which satisfies
\bel{AE4-10RX}
\|\textbf{h}^{(1)}\|^2_{H^s(\Omega)}+\|\textbf{q}^{(1)}\|^2_{H^s(\Omega)}\lesssim e^{-C_{R,a,\bar{a},k,\nu,\mu}\tau}\Big(\|\textbf{h}_0\|_{H^s(\Omega)}^2+\|\textbf{q}_0\|_{H^s(\Omega)}^2+ \|\textbf{f}\|^2_{H^s(\Omega)}+ \|\textbf{g}\|^2_{H^s(\Omega)}\Big),
\ee
and 
\bel{AE4-10R1}
\|\textbf{h}^{(1)}\|^2_{\Ccal_1^s}+\|\textbf{q}^{(1)}\|^2_{C_1^s}\lesssim\|\textbf{h}_0\|_{\Ccal_1^s}^2+\|\textbf{q}_0\|_{C_1^s}^2+ \|\textbf{f}\|^2_{\Ccal_1^s}+ \|\textbf{g}\|^2_{C_1^s},
\ee
where $C_{R,a,\bar{a},k,\nu,\mu}$ is a positive constant depending on constants $R,a,\bar{a},k,\nu,\mu$, and $R$ is a sufficient small constant.
\end{proposition}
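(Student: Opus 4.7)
\medskip

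The plan is to combine the three a priori estimates established in Lemmas~3.1, 3.2, 3.3 with a standard approximation scheme for the linear parabolic-type system (\ref{AE3-3-y-1})--(\ref{AENAY3-1}) posed on the fixed cylinder $\overline{\Omega}=(0,+\infty)\times[0,1]^3$. Since the system is linear in $(\textbf{h},\textbf{q})$ with bounded coefficients (the singular-in-$t$ factors from $\overline{\textbf{v}}_{\overline T^*}$ and $\overline{\textbf{H}}_{\overline T^*}$ have been absorbed into constant or exponentially decaying coefficients in the $(\tau,y)$ variables), once we have existence together with the estimates of Lemmas~3.1--3.3 the two inequalities (\ref{AE4-10RX})--(\ref{AE4-10R1}) follow by direct addition of those estimates and an application of the embedding $H^s\hookrightarrow\Ccal^1_1$-type argument through the equations themselves.

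First I would construct approximate solutions. Because the spatial operator $-\nu\triangle_y-\tfrac{y}{2}\cdot\nabla_y$ (respectively $-\mu\triangle_y-\tfrac{y}{2}\cdot\nabla_y$) on $\Omega=[0,1]^3$ with homogeneous Dirichlet boundary conditions is sectorial, I would pick a Galerkin basis $\{\varphi_k\}$ consisting of Dirichlet eigenfunctions of $-\triangle_y$, project the coupled system (\ref{AE3-3-y-1})--(\ref{AENAY3-1}) together with the incompressibility constraints (\ref{E3-2rrx}) onto $V_N:=\mathrm{span}\{\varphi_1,\dots,\varphi_N\}$ using the Leray projector, and solve the resulting finite-dimensional linear ODE system in $\tau$ globally on $[0,+\infty)$. (Equivalently one may use a semigroup/Duhamel approach: the linear part generates an analytic semigroup and all lower-order terms, including the Fréchet-derivative-of-pressure terms in (\ref{E3-2rr2}), are bounded perturbations in the relevant Sobolev norm thanks to $(\textbf{w},\textbf{b})\in\Bcal_R$ and $H^s(\Omega)\hookrightarrow L^\infty(\Omega)$ for $s\geq 2$.) Either way the initial data $(\textbf{h}_0,\textbf{q}_0)$ and forcing $(\textbf{f},\textbf{g})$ are approximated in the natural way.

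Next I would apply the a priori estimates of Lemmas~3.1--3.3 to the Galerkin approximants $(\textbf{h}^N,\textbf{q}^N)$, since those lemmas only use integration by parts, the divergence-free constraint, Poincar\'{e}'s inequality on the bounded box $\Omega$, and the Calder\'{o}n--Zygmund bound on $\triangle_y^{-1}$ hidden inside $\overline{f}$; all of these are preserved at the Galerkin level. Adding the estimates of Lemma~3.2 (spatial $\nabla_y^s$) and Lemma~3.3 ($\partial_\tau$ together with $\nabla_y^{s-1}\partial_\tau$) and appealing once more to the equations to trade $\partial_\tau$ for spatial derivatives in the missing mixed norms, one obtains
\[
\sum_{i=0}^{1}\|\partial_\tau^i(\textbf{h}^N,\textbf{q}^N)\|_{H^{s-i}(\Omega)}^{2}
\;\lesssim\;
e^{-C_{R,a,\bar a,k,\nu,\mu}\tau}\Bigl(\|(\textbf{h}_0,\textbf{q}_0)\|_{\Ccal_1^s}^{2}+\|(\textbf{f},\textbf{g})\|_{\Ccal_1^s}^{2}\Bigr),
\]
uniformly in $N$. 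Taking suprema in $\tau$ gives (\ref{AE4-10R1}), and reading the estimate pointwise in $\tau$ gives (\ref{AE4-10RX}). A routine weak-compactness/Aubin--Lions argument then extracts a limit $(\textbf{h},\textbf{q})$ in the stated intersection of $\CC^i([0,+\infty);H^{s-i}(\Omega))$-spaces; linearity of the equations makes the passage to the limit in (\ref{AE3-3-y-1})--(\ref{AENAY3-1}), (\ref{E3-2rrx}) immediate, and the divergence-free condition passes to the limit because each $(\textbf{h}^N,\textbf{q}^N)$ lies in the divergence-free subspace.

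The main obstacle I expect is not existence but the careful bookkeeping needed to produce estimate (\ref{AE4-10R1}) at the top regularity $s$ and $\partial_\tau$ levels simultaneously, because the nonlocal pressure contribution $(\mathcal{F}_{\textbf{w}}p)\textbf{h}+(\mathcal{F}_{\textbf{b}}p)\textbf{q}$ in (\ref{E3-2rr2}) involves $\triangle_y^{-1}$ acting on products of $\textbf{w},\textbf{b}$ with derivatives of $\textbf{h},\textbf{q}$; one must ensure this term is controlled in $H^s$ with a coefficient that is genuinely small (either through the decaying exponential factors $e^{-\alpha\tau}$ or through the smallness of $R$ on $\Bcal_R$), so that the Gr\"onwall exponents in Lemmas~3.2--3.3 remain strictly positive. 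This is precisely the role of the hypotheses $\nu,\mu$ large, $a\in(0,\tfrac12]$, $\bar a,k\in(0,1]$, and $R\ll 1$, which I would verify close the estimates at every derivative count $s$ (the threshold constants $3\nu+\tfrac{a}{2}+(a-\tfrac12)s+\tfrac34-C_{a,k,\bar a,R}$ etc.\ in (\ref{AAE3-23}) remain positive because $\nu,\mu$ absorb the $s$-dependent loss from $(a-\tfrac12)s$).
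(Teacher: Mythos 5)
Your proposal is correct and follows essentially the same route as the paper: the paper establishes existence by applying the Leray projector and observing that the lower-order terms form a bounded perturbation of the generator $\mathrm{diag}(-\nu\PP\triangle_y,-\mu\PP\triangle_y)$, so that the full operator generates a strongly continuous semigroup on $H^s\times H^s$ — exactly the semigroup/Duhamel alternative you mention — and then invokes the decay estimates of Lemmas 3.1--3.3 to obtain (\ref{AE4-10RX})--(\ref{AE4-10R1}), just as you do. Your Galerkin presentation is a routine variant of the same argument for this linear system, and your identification of the delicate point (keeping the Gr\"onwall exponents positive via large $\nu,\mu$ and small $R$) matches where the actual work in the paper's lemmas lies.
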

\begin{proof}
Let $\PP$ the Leray projector onto the space of divergence free functions.
We apply the Leray projector to (\ref{E3-2})-(\ref{E3-2rr1}), it holds
\bel{E3-26}
\aligned
&\textbf{h}_t-\nu\PP\triangle\textbf{h}+\NN_1(\textbf{w},\textbf{b},\textbf{h},\textbf{q})=\PP \textbf{f},\\
&\textbf{q}_t-\mu\PP\triangle\textbf{q}+\NN_2(\textbf{w},\textbf{b},\textbf{h},\textbf{q})=\PP\textbf{g},
\endaligned
\ee
where
$$
\aligned
&\NN_1(\textbf{w},\textbf{b},\textbf{h},\textbf{q})=\PP\Big(\textbf{h}\cdot\nabla(\overline{\textbf{v}}_{\overline{T}^*}+\textbf{w})+(\overline{\textbf{v}}_{\overline{T}^*}+\textbf{w})\cdot\nabla\textbf{h}-(\overline{\textbf{H}}_{\overline{T}^*}+\textbf{b})\cdot\nabla\textbf{q}\\
&\quad\quad\quad\quad\quad\quad\quad-\textbf{q}\cdot\nabla(\overline{\textbf{H}}_{\overline{T}^*}+\textbf{b})+\nabla((\overline{\textbf{H}}_{\overline{T}^*}+\textbf{b})\cdot\textbf{q})\Big),\\
&\NN_2(\textbf{w},\textbf{b},\textbf{h},\textbf{q})=\PP\Big(-(\overline{\textbf{H}}_{\overline{T}^*}+\textbf{b})\cdot\nabla\textbf{h}-\textbf{q}\cdot\nabla(\overline{\textbf{v}}_{\overline{T}^*}+\textbf{w})+(\overline{\textbf{v}}_{\overline{T}^*}+\textbf{w})\cdot\nabla\textbf{q}
+\textbf{h}\cdot\nabla(\overline{\textbf{H}}_{\overline{T}^*}+\textbf{b})\Big).
\endaligned
$$

We recall that equations (\ref{E3-26}) can be rewritten as linear coupled system (\ref{AE3-3-y-1})-(\ref{AENAY3-1}) in the similarity coordinates (\ref{Y-1}) by applying the Leray projector $\PP$ to them.
For convenience, we write them as 
\bel{E3-26x}
{\del\over\del\tau}\left(
\begin{array}{cccc}
\textbf{h}\\
\textbf{q}
\end{array}
\right)
+
\left(
\begin{array}{cccc}
-\nu\PP\triangle_y&0\\
0&-\mu\PP\triangle_y
\end{array}
\right)
\left(
\begin{array}{cccc}
\textbf{h}\\
\textbf{q}
\end{array}
\right)
+
\left(
\begin{array}{cccc}
\overline{\NN}_1(\textbf{w},\textbf{b},\textbf{h},\textbf{q})\\
\overline{\NN}_2(\textbf{w},\textbf{b},\textbf{h},\textbf{q})
\end{array}
\right)
=
\left(
\begin{array}{cccc}
\PP\textbf{f}\\
 \PP\textbf{g}
\end{array}
\right).
\ee

We notice that there is no singular coefficient in (\ref{E3-26x}) for $\tau\in(0,+\infty)$,
and the term
$BP:=\left(
\begin{array}{cccc}
\overline{\NN}_1(\textbf{w},\textbf{b},\circ,\circ)\\
\overline{\NN}_2(\textbf{w},\textbf{b},\circ,\circ)
\end{array}
\right)$ can be seen as a bounded perturbation of the linear operator
$ML:=\left(
\begin{array}{cccc}
-\nu\PP\triangle_y&0\\
0&-\mu\PP\triangle_y
\end{array}
\right)$.

Thus the linear operator
$$
\ZZ:=ML+BP
$$
can generate a strongly continuous semigroup $e^{\ZZ \tau}$ in Sobolev space $H^s\times H^s$ (see \cite{Yan3,Yu}).
Hence linear equations (\ref{E3-26x}) has a solution in $\bigcap_{i= 0}^1\CC^i([0,+\infty);H^{s-i}(\Omega)\times H^{s-i}(\Omega))$.
Then, from Lemma 3.3-3.4, we can get (\ref{AE4-10R1}) holds.
\end{proof}

\subsection{Local existence of solutions for the linear system in original coordinates}

Recall the self-similarity coordinates (\ref{Y-1}), the original coordinate can be expressed by the self-similarity coordinates as follows
$$
t=T(1-e^{-\tau}),\quad
x=y\sqrt{\overline{T}^*-t},
$$
so we can directly use Proposition 3.1 to get the following result.

\begin{proposition}
Let viscosity constant $\nu$ and resistivity constant $\mu$ be sufficient big, constants $a\in(0,{1\over2}]$, $\bar{a},k\in(0,1]$ and $\forall s\in\NN^+$.
Assume that $\textbf{f},\textbf{g}\in \CC^1((0,\overline{T}^*),H^s(\Omega_t))$ and $(\textbf{w},\textbf{b})^T\in \Bcal_R$.
The linearized system (\ref{E3-2})-(\ref{E3-2rrx}) with the initial data (\ref{AAAAE3-2R0}) and condition (\ref{AAAAE3-2R1}) 
admits a solution
$$
(\textbf{h}(t,x),\textbf{q}(t,x))^T\in\Ccal^s_{1}:=\bigcap_{i= 0}^1\CC^i((0,\overline{T}^*);H^{s-i}(\Omega)\times H^{s-i}(\Omega)).
$$
Moreover, it satisfies
\bel{E3-25R1X}
\|\textbf{h}^{(1)}\|^2_{H^s(\Omega_t)}+\|\textbf{q}^{(1)}\|^2_{H^s(\Omega_t)}\lesssim (\overline{T}^*-t)^{C_{R,a,\bar{a},k,\nu,\mu}}\Big(\|\textbf{h}_0\|_{H^s(\Omega_t)}^2+\|\textbf{q}_0\|_{H^s(\Omega_t)}^2+ \|\textbf{f}\|^2_{H^s(\Omega_t)}+ \|\textbf{g}\|^2_{H^s(\Omega_t)}\Big),
\ee
and 
\bel{E3-25R1}
\|\textbf{h}(t,x)\|^2_{\Ccal_1^s}+\|\textbf{q}(t,x)\|^2_{\Ccal_1^s}
\lesssim \|\textbf{h}_0\|^2_{\Ccal_1^s}+\|\textbf{q}_0\|^2_{\Ccal_1^s}+\|\textbf{f}\|^2_{\Ccal_1^s}+\|\textbf{g}\|^2_{\Ccal_1^s},\quad \forall t\in(0,\overline{T}^*),
\ee
where $C_{R,a,\bar{a},k,\nu,\mu}$ is a positive constant depending on constants $R,a,\bar{a},k,\nu,\mu$, and $R$ is a sufficient small constant.
\end{proposition}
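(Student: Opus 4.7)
The plan is to import the solution from the self-similarity coordinate system supplied by Proposition 3.1 and then unwind the change of variables (3.1.16) to produce the stated solution in the original coordinates on the moving domain $\Omega_t$. First I would take the solution $(\textbf{h}(\tau,y),\textbf{q}(\tau,y))$ given by Proposition 3.1 in $\bigcap_{i=0}^1\CC^i([0,+\infty);H^{s-i}(\Omega)\times H^{s-i}(\Omega))$ and define the candidate solution in the physical variables by
\[
\textbf{h}(t,x):=\textbf{h}\!\left(-\ln(\overline{T}^*-t)+\ln\overline{T}^*,\ \frac{x}{\sqrt{\overline{T}^*-t}}\right),\quad \textbf{q}(t,x):=\textbf{q}\!\left(-\ln(\overline{T}^*-t)+\ln\overline{T}^*,\ \frac{x}{\sqrt{\overline{T}^*-t}}\right).
\]
Because the derivation of equations (3.1.11)--(3.1.19) is reversible, $(\textbf{h},\textbf{q})$ so defined solves the linearized system (3.1.3)--(3.1.5) on $(0,\overline{T}^*)\times\Omega_t$ with the prescribed data (3.1.9)--(3.1.10); the transformation of the initial trace and of the Dirichlet boundary on $\partial\Omega_t$ is straightforward because $y\in\partial\Omega$ corresponds exactly to $x\in\partial\Omega_t$, and $\tau=0$ corresponds to $t=0$.

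Next I would convert norms. Using the Jacobian $dx=(\overline{T}^*-t)^{3/2}dy$ and the chain rule $\partial_{x_i}=(\overline{T}^*-t)^{-1/2}\partial_{y_i}$, each term $\|\nabla_x^{j}\textbf{h}\|^2_{L^2(\Omega_t)}$ equals $(\overline{T}^*-t)^{3/2-j}\|\nabla_y^{j}\textbf{h}\|^2_{L^2(\Omega)}$, and similarly a $\partial_t$ in physical variables relates to a combination of $\partial_\tau$ and $y\cdot\nabla_y$ against a multiplicative factor. The crucial identity is
\[
e^{-\tau}=\frac{\overline{T}^*-t}{\overline{T}^*},
\]
which converts the self-similar exponential decay rate $e^{-C_{R,a,\bar a,k,\nu,\mu}\tau}$ produced by Lemma 3.3 and Lemma 3.4 into a polynomial factor $(\overline{T}^*-t)^{C_{R,a,\bar a,k,\nu,\mu}}$ in the physical variable (absorbing the harmless constant $(\overline{T}^*)^{-C}$ into the new constant by relabeling). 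Combining this with the norm change on the left side and on $\|\textbf{h}_0\|_{H^s(\Omega_t)}$, $\|\textbf{f}\|_{H^s(\Omega_t)}$ on the right side yields estimate (3.1.22), since the bookkeeping powers of $(\overline{T}^*-t)$ from $\nabla_x$ versus $\nabla_y$ can be subsumed into $C_{R,a,\bar a,k,\nu,\mu}$ up to shrinking $R$ and enlarging $\nu,\mu$ as in Section 3.2--3.3.

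For the $\Ccal_1^s$ bound (3.1.23) I would note that, because $\partial_t=\frac{1}{\overline{T}^*-t}\partial_\tau-\frac{1}{2(\overline{T}^*-t)}y\cdot\nabla_y$ on functions of $(\tau,y)$, the supremum in $t\in(0,\overline{T}^*)$ of $\|\partial^i_t(\textbf{h},\textbf{q})\|_{H^{s-i}}$ can be controlled by the corresponding supremum in $\tau$ of $\|\partial^i_\tau(\textbf{h},\textbf{q})\|_{H^{s-i}(\Omega)}$ plus lower order terms, provided the factor $(\overline{T}^*-t)^{-1}$ that appears upon differentiating in $t$ is compensated by the polynomial decay in $(\overline{T}^*-t)$ provided by (3.1.22). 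This is where the strong exponential decay of Lemmas 3.3--3.4 is essential: by taking $\nu,\mu$ sufficiently large, the exponent $C_{R,a,\bar a,k,\nu,\mu}$ can be made arbitrarily large, so the powers $(\overline{T}^*-t)^{C_{R,a,\bar a,k,\nu,\mu}-1}$ etc.\ coming from time differentiation remain uniformly bounded on $(0,\overline{T}^*)$, yielding (3.1.23).

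The main obstacle I expect is keeping the weights consistent: the change of variables produces both helpful factors (the exponential decay becoming a power) and unhelpful ones (the $(\overline{T}^*-t)^{-1/2}$ per $x$-derivative and the $(\overline{T}^*-t)^{-1}$ per $t$-derivative). Reconciling these competing powers, and arguing that one may absorb the unfavorable factors into the positive exponent $C_{R,a,\bar a,k,\nu,\mu}$ by taking $\nu,\mu$ sufficiently large (a freedom already reserved in the hypotheses), is the delicate book-keeping that must be carried out; once that is done, the solution regularity $(\textbf{h},\textbf{q})\in\Ccal_1^s$ and estimates (3.1.22)--(3.1.23) follow from the corresponding self-similar estimates of Proposition 3.1.
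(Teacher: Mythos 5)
Your proposal takes essentially the same route as the paper: the paper supplies no separate proof of this proposition at all, only the preceding remark that the original coordinates are recovered from the self-similarity coordinates via $t=\overline{T}^*(1-e^{-\tau})$, $x=y\sqrt{\overline{T}^*-t}$, so that Proposition 3.1 can be ``directly used.'' Your explicit accounting of the Jacobian, the chain-rule weights $\partial_{x_i}=(\overline{T}^*-t)^{-1/2}\partial_{y_i}$, and the identity $e^{-\tau}=(\overline{T}^*-t)/\overline{T}^*$ converting exponential decay into the power $(\overline{T}^*-t)^{C_{R,a,\bar a,k,\nu,\mu}}$ fills in details the paper omits entirely, and the weight-bookkeeping issue you flag is a genuine delicacy that the paper simply does not address.
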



\section{Asymptotic stability of explicit blowup solutions}
\setcounter{equation}{0}

The stability of the explicit blowup solutions $(\overline{\textbf{v}}_{\overline{T}^*},\overline{\textbf{H}}_{\overline{T}^*})$
is equivalent to prove the local existence of solutions $(\textbf{w}(t,x),\textbf{b}(t,x))$ for equations (\ref{E3-1}) with a given small initial data.
Meanwhile, this solution $(\textbf{w}(t,x),\textbf{b}(t,x))$ should be sufficient small in some Sobolev space as the time $t$ goes to the blowup time $\overline{T}^*$.

\subsection{The approximation scheme}

We will construct a local higher regular solution for nonlinear equations (\ref{E3-1}) by using a suitable new Nash-Moser iteration scheme, which has been used in \cite{Yan,Yan3}.
We introduce a family of smooth operators possessing the following properties.

\begin{lemma}\cite{Alin,H1}
There is a family $\{\Pi_{\theta}\}_{\theta\geq1}$ of smoothing operators in the space $H^s(\Omega)$ acting on the class of functions such that
\bel{AH1}
\aligned
&\|\Pi_{\theta}\textbf{w}\|_{H^{k_1}}\leq C\theta^{(k_1-k_2)_+}\|\textbf{w}\|_{H^{k_2}},~~k_1,~k_2\geq0,\\
&\|\Pi_{\theta}\textbf{w}-\textbf{w}\|_{H^{k_1}}\leq C\theta^{k_1-k_2}\|\textbf{w}\|_{H^{k_2}},~~0\leq k_1\leq k_2,\\
&\|\frac{d}{d\theta}\Pi_{\theta}\textbf{w}\|_{H^{k_1}}\leq C\theta^{(k_1-k_2)_+-1}\|\textbf{w}\|_{H^{k_2}},~~k_1,~k_2\geq0,
\endaligned
\ee
where $C$ is a positive constant and $(s_1-s_2)_+:=\max(0,s_1-s_2)$.
\end{lemma}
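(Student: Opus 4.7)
The plan is to construct $\{\Pi_\theta\}_{\theta\geq 1}$ via a Fourier-side frequency truncation applied to an extension of $u$ to all of $\RR^3$, which is the standard Nash--Moser smoothing construction. First I would fix a total extension operator $E\colon H^k(\Omega)\to H^k(\RR^3)$ bounded uniformly for every $k\in\NN$ (a Stein-type extension works for the cube $\Omega=[0,1]^3$). Next I would choose a radial cutoff $\chi\in C_c^\infty(\RR^3)$ with $\chi\equiv 1$ on $\{|\xi|\leq 1/2\}$ and $\mathrm{supp}\,\chi\subset\{|\xi|\leq 1\}$, and define
$$
\Pi_\theta u := \Bigl(\mathcal{F}^{-1}\bigl[\chi(\xi/\theta)\,\mathcal{F}(Eu)\bigr]\Bigr)\big|_\Omega ,\qquad \theta\geq 1 .
$$
By construction $\Pi_\theta u$ extends to an entire function on $\RR^3$ with Fourier support in $\{|\xi|\leq\theta\}$, while $Eu-\Pi_\theta(Eu)$ has Fourier support in $\{|\xi|\geq\theta/2\}$; the three estimates in \eqref{AH1} will then reduce to weighted multiplier estimates on these two sets.

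Second, I would verify the three bounds by Plancherel. For the first inequality the trivial case $k_1\leq k_2$ uses only $\|\chi\|_{L^\infty}\leq 1$ together with the boundedness of $E$, while for $k_1>k_2$ the pointwise bound
$$
(1+|\xi|^2)^{k_1/2}\,\chi(\xi/\theta) \leq C\,\theta^{k_1-k_2}(1+|\xi|^2)^{k_2/2}
$$
on $\mathrm{supp}\,\chi(\cdot/\theta)\subset\{|\xi|\leq\theta\}$ produces the factor $\theta^{(k_1-k_2)_+}$. For the second inequality the analogous bound
$$
(1+|\xi|^2)^{k_1/2}\,\bigl|1-\chi(\xi/\theta)\bigr| \leq C\,\theta^{k_1-k_2}(1+|\xi|^2)^{k_2/2}
$$
on $\{|\xi|\geq\theta/2\}$ (valid because $k_1\leq k_2$) yields $\|\Pi_\theta u-u\|_{H^{k_1}}\leq C\theta^{k_1-k_2}\|u\|_{H^{k_2}}$. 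For the third inequality I would differentiate under the Fourier integral, using $\tfrac{d}{d\theta}\chi(\xi/\theta)=-\theta^{-1}(\xi/\theta)\cdot(\nabla\chi)(\xi/\theta)$, which is again a smooth, compactly supported multiplier of the same type; the argument for the first inequality then applies verbatim and contributes the extra factor $\theta^{-1}$.

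Because the smoothing is performed after extending $u$ to $\RR^3$, no boundary-trace or compatibility conditions at $\partial\Omega$ ever enter the argument, and the constants $C$ in \eqref{AH1} are absolute. I do not expect a serious obstacle: the only nontrivial ingredient is the existence of a total extension operator bounded for \emph{every} $k\in\NN$ simultaneously, but on the cube this is a classical Stein (or reflection) construction. Once $E$ is fixed, each of the three estimates is essentially a one-line Plancherel computation against a smooth compactly supported symbol.
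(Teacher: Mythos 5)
Your construction is correct: extension to $\RR^3$ by a Stein-type total extension operator followed by Fourier truncation at scale $\theta$, with the three bounds following from Plancherel and the support properties of $\chi(\cdot/\theta)$ and $1-\chi(\cdot/\theta)$, is the standard way these smoothing operators are built. The paper itself offers no proof of this lemma --- it is quoted verbatim from Alinhac and H\"ormander --- and your argument is essentially the one underlying those references, so there is nothing to compare beyond noting that your sketch is complete and the approach matches the classical one.
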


In our iteration scheme, we set
$$
\theta=N_m=2^m,\quad \forall m= 0,1,2,\ldots.
$$
Then, by (\ref{AH1}), there is
\bel{AH2}
\|\Pi_{N_m}\textbf{w}\|_{H^{k_1}}\lesssim N_m^{k_1-k_2}\|\textbf{w}\|_{H^{k_2}},\quad \forall k_1\geq k_2.
\ee

We consider the approximation problem of nonlinear equations (\ref{E3-1}) as follows

\bel{E4-1}
\aligned
\Lcal(\textbf{w},\textbf{b}):=&\textbf{w}_t-\nu\triangle\textbf{w}+\textbf{w}\cdot\nabla\overline{\textbf{v}}_{\overline{T}^*}+\overline{\textbf{v}}_{\overline{T}^*}\cdot\nabla\textbf{w}
-\overline{\textbf{H}}_{\overline{T}^*}\cdot\nabla\textbf{b}-\textbf{b}\cdot\nabla\overline{\textbf{H}}_{\overline{T}^*}+\nabla(\overline{\textbf{H}}_{\overline{T}^*}\cdot\textbf{b})\\
&\quad+\Pi_{N_m}\Big(\textbf{w}\cdot\nabla\textbf{w}-\nabla p-\textbf{b}\cdot\nabla\textbf{b}+\nabla({|\textbf{b}|^2\over 2})\Big),\\
\Jcal(\textbf{w},\textbf{b}):=&\textbf{b}_t-\mu\triangle\textbf{b}-\overline{\textbf{H}}_{\overline{T}^*}\cdot\nabla\textbf{w}-\textbf{b}\cdot\nabla\overline{\textbf{v}}_{\overline{T}^*}+\overline{\textbf{v}}_{\overline{T}^*}\cdot\nabla\textbf{b}+\textbf{w}\cdot\nabla\overline{\textbf{H}}_{\overline{T}^*}\\
&\quad+\Pi_{N_m}\Big(-\textbf{b}\cdot\nabla\textbf{w}+\textbf{w}\cdot\nabla\textbf{b}\Big),\\
\nabla\cdot\textbf{w}=&0,\quad \nabla\cdot\textbf{b}=0,
\endaligned
\ee
with initial data
$$
\textbf{w}(0,x)=\textbf{w}_0(x)\in H^s(\Omega_t),\quad \textbf{b}(0,x)=\textbf{b}_0(x)\in H^s(\Omega_t),
$$
and boundary conditions
$$
\textbf{w}(t,x)|_{x\in\del\Omega_t}=0,\quad \textbf{b}(t,x)|_{x\in\del\Omega_t}=0,
$$
where $(t,x)\in(0,\overline{T}^*)\times\Omega_t$,  the pressure $p$ is given in (\ref{E3-0R1}),  $\nabla\overline{\textbf{v}}_{\overline{T}^*}$ and $\nabla\overline{\textbf{H}}_{\overline{T}^*}$ are given in (\ref{xx1-1})-(\ref{xx1-2}), respectively.

Assume that the $m$-th approximation solutions of (\ref{E4-1}) is denoted by $(\textbf{w}^{(m)},\textbf{b}^{(m)})$ with $m=0,1,2,\ldots$. Let
$$
\aligned
&\textbf{h}^{(m)}:=\textbf{w}^{(m)}-\textbf{w}^{(m-1)},\\
&\textbf{q}^{(m)}:=\textbf{b}^{(m)}-\textbf{b}^{(m-1)},
\endaligned
$$
then we have
$$
\aligned
&\textbf{w}^{(m)}=\textbf{w}^{(0)}+\sum_{i=1}^m\textbf{h}^{(i)},\\
&\textbf{b}^{(m)}=\textbf{b}^{(0)}+\sum_{i=1}^m\textbf{q}^{(i)}.
\endaligned
$$
Our target is to prove that $(\textbf{w}^{(\infty)},\textbf{b}^{(\infty)})$ is a local solution of nonlinear equations (\ref{E3-1}). It is equivalent to show the series $\sum\limits_{i=1}^m\textbf{h}^{(i)}$
and $\sum\limits_{i=1}^m\textbf{q}^{(i)}$  are convergence.

We linearize nonlinear equations (\ref{E3-1}) around $(\textbf{w}^{(m-1)},\textbf{b}^{(m-1)})$ to get the linearized operators as follows
$$
\aligned
\Lcal[(\textbf{w}^{(m-1)},\textbf{b}^{(m-1)})](\textbf{h}^{(m)},\textbf{q}^{(m)}):=&\textbf{h}^{(m)}_t-\nu\triangle\textbf{h}^{(m)}+\textbf{h}^{(m)}\cdot\nabla(\overline{\textbf{v}}_{\overline{T}^*}+\textbf{w}^{(m-1)})
+(\overline{\textbf{v}}_{\overline{T}^*}+\textbf{w}^{(m-1)})\cdot\nabla\textbf{h}^{(m)}
\\
&\quad-\nabla[(\Fcal_{\textbf{w}^{(m-1)}} p)\textbf{h}^{(m)}+(\Fcal_{\textbf{b}^{(m-1)}} p)\textbf{q}^{(m)}]-(\overline{\textbf{H}}_{\overline{T}^*}+\textbf{b}^{(m-1)})\cdot\nabla\textbf{q}^{(m)}\\
&\quad-\textbf{q}^{(m)}\cdot\nabla(\overline{\textbf{H}}_{\overline{T}^*}+\textbf{b}^{(m-1)})+\nabla((\overline{\textbf{H}}_{\overline{T}^*}+\textbf{b}^{(m-1)})\cdot\textbf{q}^{(m)}),
\endaligned
$$
$$
\aligned
\Jcal[(\textbf{w}^{(m-1)},\textbf{b}^{(m-1)})](\textbf{h}^{(m)},\textbf{q}^{(m)}):=&\textbf{q}^{(m)}_t-\mu\triangle\textbf{q}^{(m)}-(\overline{\textbf{H}}_{\overline{T}^*}+\textbf{b}^{(m-1)})\cdot\nabla\textbf{h}^{(m)}-\textbf{q}^{(m)}\cdot\nabla(\overline{\textbf{v}}_{\overline{T}^*}+\textbf{w}^{(m-1)})\\
&\quad+(\overline{\textbf{v}}_{\overline{T}^*}+\textbf{w}^{(m-1)})\cdot\nabla\textbf{q}^{(m)}
+\textbf{h}^{(m)}\cdot\nabla(\overline{\textbf{H}}_{\overline{T}^*}+\textbf{b}^{(m-1)}).
\endaligned
$$

Let constants $k_0>2$ and $0<2\eps_0<\eps\ll1$.
We choose the approximation function $(\textbf{w}^{(0)},\textbf{b}^{(0)})\in \Ccal_1^{k_0+3}\times\Ccal_1^{k_0+3}$ satisfying
\bel{E4-5}
\aligned
&\textbf{w}^{(0)}\neq(0,0,0)^T,\quad \textbf{b}^{(0)}\neq(0,0,0)^T,\\
&\nabla\cdot\textbf{w}^{(0)}=0,\quad \nabla\cdot\textbf{b}^{(0)}=0,\\
&\|\textbf{w}^{(0)}\|_{H^{s}(\Omega_t)}\lesssim \eps_0(\overline{T}^*-t)^{C_{\eps,a,\bar{a},k,\nu,\mu}},\quad \|\textbf{b}^{(0)}\|_{H^s(\Omega_t)}\lesssim\eps_0(\overline{T}^*-t)^{C_{\eps,a,\bar{a},k,\nu,\mu}},\\
&\|\textbf{w}^{(0)}\|_{\Ccal_1^{k_0+3}}\lesssim \eps_0<\eps,\quad \|\textbf{b}^{(0)}\|_{\Ccal_1^{k_0+3}}\lesssim \eps_0<\eps,\\
&\|E_1^{(0)}\|_{\Ccal_1^{k_0+3}}\lesssim\eps_0<{\eps\over2},\quad \|E_2^{(0)}\|_{\Ccal_1^{k_0+3}}\lesssim\eps_0<{\eps\over2},
\endaligned
\ee
where the error term
$$
\aligned
&E_1^{(0)}:=\Lcal(\textbf{w}^{(0)},\textbf{b}^{(0)}),\\
&E_2^{(0)}:=\Jcal(\textbf{w}^{(0)},\textbf{b}^{(0)}).
\endaligned
$$

The $m$-th error terms are defined by
\bel{E4-6}
\aligned
\Rcal_1(\textbf{h}^{(m)},\textbf{q}^{(m)})&:=\Lcal(\textbf{w}^{(m-1)}+\textbf{h}^{(m)},\textbf{b}^{(m-1)}+\textbf{q}^{(m)})-\Lcal(\textbf{w}^{(m-1)},\textbf{b}^{(m-1)})\\
&\quad-\Pi_{N_m}\Lcal[(\textbf{w}^{(m-1)},\textbf{b}^{(m-1)})](\textbf{h}^{(m)},\textbf{q}^{(m)}),\\
\Rcal_2(\textbf{h}^{(m)},\textbf{q}^{(m)})&:=\Jcal(\textbf{w}^{(m-1)}+\textbf{h}^{(m)},\textbf{b}^{(m-1)}+\textbf{q}^{(m)})-\Jcal(\textbf{w}^{(m-1)},\textbf{b}^{(m-1)})\\
&\quad-\Pi_{N_m}\Jcal[(\textbf{w}^{(m-1)},\textbf{b}^{(m-1)})](\textbf{h}^{(m)},\textbf{q}^{(m)}),\\
\endaligned
\ee
which are also the nonlinear term in approximation problem (\ref{E4-1}) at $(\textbf{w}^{(m-1)},\textbf{b}^{(m-1)})$. The exact form of nonlinear term (\ref{E4-6}) is very complicated, here we does not write it down.
We carry out the tame estimates.

\begin{lemma}
Let viscosity constant $\nu$ and resistivity constant $\mu$ be sufficient big, constants $a\in(0,{1\over2}]$, $\bar{a},k\in(0,1]$ and $\forall s\in\NN^+$.
Assume that $(\textbf{w}^{(m-1)},\textbf{b}^{(m-1)})\in H^s(\Omega_t)\times H^s(\Omega_t)$. Then for any $t\in(0,\overline{T}^*)$, it holds
\bel{E4-9R1}
\aligned
&\|\Rcal_1(\textbf{h}^{(m)},\textbf{q}^{(m)})\|_{\Ccal_1^s}\lesssim N_m^2\Big(\|\textbf{h}^{(m)}\|^2_{\Ccal_1^s}+\|\textbf{q}^{(m)}\|^2_{\Ccal_1^s}\Big),\\
&\|\Rcal_2(\textbf{h}^{(m)},\textbf{q}^{(m)})\|_{\Ccal_1^s}\lesssim N_m^2\Big(\|\textbf{h}^{(m)}\|^2_{\Ccal_1^s}+\|\textbf{q}^{(m)}\|^2_{\Ccal_1^s}\Big).
\endaligned
\ee
\end{lemma}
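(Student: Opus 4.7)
The plan is to separate $\Rcal_i$ into two structurally different pieces and estimate each with tame Sobolev techniques together with the smoothing inequalities (\ref{AH1})--(\ref{AH2}). First, I would write $\Lcal = \mathfrak{L}_1 + \Pi_{N_m}\mathfrak{N}_1$, where $\mathfrak{L}_1$ collects all terms linear in $(\textbf{w},\textbf{b})$ (the time derivative $\textbf{w}_t$, the viscous Laplacian $-\nu\triangle\textbf{w}$, and the couplings with the explicit backgrounds $\overline{\textbf{v}}_{\overline{T}^*}, \overline{\textbf{H}}_{\overline{T}^*}$), while $\mathfrak{N}_1$ collects the purely bilinear nonlinearity $\textbf{w}\cdot\nabla\textbf{w}-\nabla p-\textbf{b}\cdot\nabla\textbf{b}+\nabla(|\textbf{b}|^2/2)$, with $p$ the quadratic expression from (\ref{E3-0R1}). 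An analogous decomposition $\Jcal=\mathfrak{L}_2+\Pi_{N_m}\mathfrak{N}_2$ is done for $\Jcal$. Because each $\mathfrak{N}_i$ is exactly bilinear, the Taylor expansion of $\Lcal(\textbf{w}^{(m-1)}+\cdot,\textbf{b}^{(m-1)}+\cdot)$ terminates at second order, and a short calculation using the form of the linearization in the excerpt yields the identity
$$
\Rcal_i(\textbf{h}^{(m)},\textbf{q}^{(m)}) = (I-\Pi_{N_m})\mathfrak{L}_i(\textbf{h}^{(m)},\textbf{q}^{(m)}) + \Pi_{N_m}\mathfrak{N}_i(\textbf{h}^{(m)},\textbf{q}^{(m)}), \qquad i=1,2.
$$

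For the bilinear piece $\Pi_{N_m}\mathfrak{N}_i(\textbf{h}^{(m)},\textbf{q}^{(m)})$, I would combine the tame Sobolev product estimate with the Calder\'on--Zygmund boundedness of $\nabla\triangle^{-1}\del$ (to put the non-local pressure on the same footing as the local products) and the embedding $H^{s-1}(\Omega)\hookrightarrow L^{\infty}(\Omega)$ for $s\geq 3$ to get $\|\mathfrak{N}_i(\textbf{h},\textbf{q})\|_{H^{s-1}}\lesssim \|\textbf{h}\|^{2}_{H^s}+\|\textbf{q}\|^{2}_{H^s}$. The smoothing inequality (\ref{AH2}) with $k_1=s$, $k_2=s-1$ then supplies $\|\Pi_{N_m}\mathfrak{N}_i\|_{H^s}\lesssim N_m\|(\textbf{h},\textbf{q})\|^{2}_{H^s}$. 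For the $\del_t$-component of the $\Ccal_1^s$-norm, time differentiation produces expressions such as $\textbf{h}\cdot\nabla\textbf{h}_t$ in which $\nabla\textbf{h}_t$ lies only in $H^{s-2}$; moving one further derivative onto $\Pi_{N_m}$ costs an additional power of $N_m$, yielding $\lesssim N_m^{2}\|(\textbf{h},\textbf{q})\|^{2}_{\Ccal_1^s}$.

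For the smoothing defect $(I-\Pi_{N_m})\mathfrak{L}_i(\textbf{h}^{(m)},\textbf{q}^{(m)})$, the linear operator $\mathfrak{L}_i$ loses two spatial derivatives through $\triangle\textbf{h}$, so $\mathfrak{L}_i(\textbf{h},\textbf{q})\in H^{s-2}(\Omega_t)$ with norm $\lesssim\|(\textbf{h},\textbf{q})\|_{\Ccal_1^s}$; the ``upward'' smoothing bound $\|\Pi_{N_m}u\|_{H^{s}}\lesssim N_m^{2}\|u\|_{H^{s-2}}$ produces exactly the advertised $N_m^{2}$ prefactor. To turn this linear-in-$(\textbf{h},\textbf{q})$ estimate into the quadratic form in the statement, one absorbs one factor of $\|(\textbf{h}^{(m)},\textbf{q}^{(m)})\|_{\Ccal_1^s}$ using the a priori confinement of the iterate inside the Nash--Moser ball $\Bcal_R$ (whose higher-regularity radius $R\ll 1$ controls the $\Ccal_1^s$-norm as well).

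The hard part will be the simultaneous management of the non-local pressure and the $\del_t$ component of the $\Ccal_1^s$-norm: the operators $\triangle^{-1}$ and $\Pi_{N_m}$ do not commute cleanly with $\del_t$ or with multiplication, and each of the many quadratic terms in $\mathfrak{N}_i$ must be tracked carefully as to whether it costs one derivative (giving $N_m$) or two (giving $N_m^{2}$). A further subtlety is the reconciliation, inside a single inequality, of the linear-in-$(\textbf{h},\textbf{q})$ smoothing defect with the quadratic-in-$(\textbf{h},\textbf{q})$ Taylor remainder; this requires the inductive hypothesis on $\Bcal_R$ to be propagated alongside the tame estimate (\ref{E4-9R1}) itself, so that both contributions collapse to the same $N_m^{2}\|(\textbf{h},\textbf{q})\|^{2}_{\Ccal_1^s}$ bookkeeping.
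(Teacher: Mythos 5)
Your estimate of the mollified quadratic piece $\Pi_{N_m}\mathfrak{N}_i(\textbf{h}^{(m)},\textbf{q}^{(m)})$ is, in substance, the paper's entire proof: the paper's argument consists of the single observation that the nonlinearity in (\ref{E4-1}) is quadratic with at most one spatial derivative (plus the nonlocal pressure), so that Calder\'on--Zygmund bounds, the Sobolev product estimate and the smoothing inequality (\ref{AH2}) produce the prefactor $N_m^2$ in front of the squared $\Ccal_1^s$-norms. Up to that point your proposal agrees with, and is more explicit than, the paper.

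The genuine gap is in your treatment of the smoothing defect $(I-\Pi_{N_m})\mathfrak{L}_i(\textbf{h}^{(m)},\textbf{q}^{(m)})$, which your (correct) decomposition of (\ref{E4-6}) isolates but the paper never mentions. You propose to ``absorb one factor of $\|(\textbf{h}^{(m)},\textbf{q}^{(m)})\|_{\Ccal_1^s}$ using the a priori confinement in $\Bcal_R$'' so as to convert a bound that is linear in $(\textbf{h}^{(m)},\textbf{q}^{(m)})$ into the quadratic bound of the statement. This implication goes the wrong way: from $\|u\|\leq R$ one deduces $\|u\|^2\leq R\,\|u\|$, not $\|u\|\lesssim\|u\|^2$. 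In the iteration of Proposition 4.1 the increments obey $\|\textbf{h}^{(m)}\|_{\Ccal_1^{k_m}}<\eps^{2^m}$, so a residual contribution of size $\eps^{2^m}$ is vastly larger than the $\eps^{2^{m+1}}$ that the lemma must deliver in (\ref{E4-14}); an additive linear term of this size would destroy the quadratic convergence. There is a second problem with the same term: the un-mollified half of $(I-\Pi_{N_m})\mathfrak{L}_i(\textbf{h}^{(m)},\textbf{q}^{(m)})$ contains $\triangle\textbf{h}^{(m)}$ and $\textbf{h}^{(m)}_t$ measured in $H^s$, which are not controlled by $\|\textbf{h}^{(m)}\|_{\Ccal_1^s}$ at all; only the $\Pi_{N_m}$-half can be recovered at cost $N_m^2$ via (\ref{AH2}). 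The paper's one-line proof sidesteps both issues by implicitly reading $\Rcal_i$ as consisting solely of the mollified quadratic Taylor remainder (for which its observation ``quadratic, one derivative'' suffices); to prove the lemma as stated you must either justify that reading of (\ref{E4-6}) or estimate the defect term in a lower norm where $(I-\Pi_{N_m})$ gains negative powers of $N_m$ --- the absorption argument as written does not close.
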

\begin{proof}
We notice that the highest order of nonlinear term in (\ref{E4-1}) is $2$, and the highest order of derivatives on $x$ in (\ref{E4-6})
are $1$. By (\ref{AH2}) and (\ref{E4-1}), we use the standard Calder\'on-Zygmund theory and Young's inequality to estimate each term in $\Rcal_1(\textbf{h}^{(m)},\textbf{q}^{(m)})$ and $\Rcal_2(\textbf{h}^{(m)},\textbf{q}^{(m)})$, we obtain
$$
\aligned
&\|\Rcal_1(\textbf{h}^{(m)},\textbf{q}^{(m)})\|_{\Ccal_1^s}\lesssim N_m^2\Big(\|\textbf{h}^{(m)}\|^2_{\Ccal_1^s}+\|\textbf{q}^{(m)}\|^2_{\Ccal_1^s}\Big),\\
&\|\Rcal_2(\textbf{h}^{(m)},\textbf{q}^{(m)})\|_{\Ccal_1^s}\lesssim N_m^2\Big(\|\textbf{h}^{(m)}\|^2_{\Ccal_1^s}+\|\textbf{q}^{(m)}\|^2_{\Ccal_1^s}\Big).
\endaligned
$$

\end{proof}

The following Lemma is to show how to construct the $m$-th approximation solution.

\begin{lemma}
Let viscosity constant $\nu$ and resistivity constant $\mu$ be sufficient big, constants $a\in(0,{1\over2}]$, $\bar{a},k\in(0,1]$ and $\forall s\in\NN^+$.
Assume that $(\textbf{w}^{(m-1)},\textbf{b}^{(m-1)})\in\Bcal_{\eps}$.
The linear problem
$$
\aligned
&\Pi_{N_m}\Lcal[(\textbf{w}^{(m-1)},\textbf{b}^{(m-1)})](\textbf{h}^{(m)},\textbf{q}^{(m)})=E_1^{(m-1)},\\
&\Pi_{N_m}\Jcal[(\textbf{w}^{(m-1)},\textbf{b}^{(m-1)})](\textbf{h}^{(m)},\textbf{q}^{(m)})=E_2^{(m-1)},\\
&\nabla\cdot\textbf{h}^{(m)}=0,\quad \nabla\cdot\textbf{q}^{(m)}=0,
\endaligned
$$
with the initial data
$$
\textbf{h}^{(m)}(0,x)=\textbf{h}^{(m)}_0(x),\quad \textbf{q}^{(m)}(0,x)=\textbf{q}^{(m)}_0(x),
$$
and the boundary conditions
$$
\textbf{h}^{(m)}(t,x)|_{x\in\del\Omega_t}=0,\quad \textbf{q}^{(m)}(t,x)|_{x\in\del\Omega_t}=0,
$$
has a solution $(\textbf{h}^{(m)},\textbf{q}^{(m)})\in H^s(\Omega_t)\times H^s(\Omega_t)$ satisfying
\bel{AAE4-10R1}
\|\textbf{h}^{(m)}\|^2_{H^s}+\|\textbf{q}^{(m)}\|^2_{H^s}\lesssim (\overline{T}^*-t)^{C_{\eps,a,\bar{a},k,\nu,\mu}}\Big(\|\textbf{h}_0^{(m)}\|_{H^s}^2+\|\textbf{q}_0^{(m)}\|_{H^s}^2+ \|E_1^{(m-1)}\|^2_{H^s}+ \|E_2^{(m-1)}\|^2_{H^s}\Big),
\ee
and
\bel{E4-10R1}
\|\textbf{h}^{(m)}\|^2_{\Ccal_1^s}+\|\textbf{q}^{(m)}\|^2_{\Ccal_1^s}\lesssim\|\textbf{h}_0^{(m)}\|_{\Ccal_1^s}^2+\|\textbf{q}_0^{(m)}\|_{\Ccal_1^s}^2+ \|E_1^{(m-1)}\|^2_{\Ccal_1^s}+ \|E_2^{(m-1)}\|^2_{\Ccal_1^s},
\ee
where $C_{\eps,a,\bar{a},k,\nu,\mu}$ is a positive constant depending on constants $\eps,a,\bar{a},k,\nu,\mu$, and the error term
\bel{E4-10R2}
\aligned
&E_1^{(m-1)}:=\Lcal(\textbf{w}^{(m-1)},\textbf{b}^{(m-1)})=\Rcal_1(\textbf{h}^{(m)},\textbf{q}^{(m)}),\\
&E_2^{(m-1)}:=\Jcal(\textbf{w}^{(m-1)},\textbf{b}^{(m-1)})=\Rcal_2(\textbf{h}^{(m)},\textbf{q}^{(m)}).
\endaligned
\ee
\end{lemma}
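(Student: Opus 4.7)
The plan is to reduce this lemma to the linear well-posedness result already established in Proposition 3.2. The operators $\Lcal[(\textbf{w}^{(m-1)},\textbf{b}^{(m-1)})]$ and $\Jcal[(\textbf{w}^{(m-1)},\textbf{b}^{(m-1)})]$ constructed in the Nash--Moser scheme coincide, up to sign conventions and a trivial rearrangement, with the linearized coupled system (\ref{E3-2})--(\ref{E3-2rrx}) with the fixed profile $(\textbf{w},\textbf{b})=(\textbf{w}^{(m-1)},\textbf{b}^{(m-1)})$ and with source terms $(\textbf{f},\textbf{g})=(E_1^{(m-1)},E_2^{(m-1)})$. Hence, provided the hypothesis $(\textbf{w}^{(m-1)},\textbf{b}^{(m-1)})\in\Bcal_R$ of Section 3 is available for some small $R$, Proposition 3.2 is directly applicable.

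First I would verify the hypothesis $(\textbf{w}^{(m-1)},\textbf{b}^{(m-1)})\in\Bcal_R$. Since the scheme is seeded at (\ref{E4-5}) with $\|\textbf{w}^{(0)}\|_{\Ccal_1^{k_0+3}}+\|\textbf{b}^{(0)}\|_{\Ccal_1^{k_0+3}}\lesssim\eps_0<\eps$ and the successive corrections $\textbf{h}^{(i)},\textbf{q}^{(i)}$ are controlled inductively (their convergence is the content of the next stage of the Nash--Moser argument), one may assume $(\textbf{w}^{(m-1)},\textbf{b}^{(m-1)})\in\Bcal_{\eps}\subset\Bcal_R$ once $\eps$ is chosen so that $\eps\leq R$. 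Under this hypothesis the tame linear estimates (\ref{E3-25R1X}) and (\ref{E3-25R1}) hold for the unsmoothed linearized problem.

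Next I would handle the smoothing operator $\Pi_{N_m}$ appearing on the left-hand side. The cleanest route is to seek $(\textbf{h}^{(m)},\textbf{q}^{(m)})$ in the range of $\Pi_{N_m}$, i.e. spectrally localised below frequency $N_m$, so that $\Pi_{N_m}$ acts as the identity on the Galerkin-type subspace in which the correction lives. With this ansatz the equation becomes the unsmoothed linear problem solved by Proposition 3.2 with source $E_1^{(m-1)},E_2^{(m-1)}$ and initial data $(\textbf{h}^{(m)}_0,\textbf{q}^{(m)}_0)$; the smoothing estimates (\ref{AH1})--(\ref{AH2}) ensure that projecting the resulting solution back into the range of $\Pi_{N_m}$ (if needed) introduces only factors $N_m^{\,0}=1$ in the $H^s$ norm and thus does not degrade the constants. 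The bound (\ref{AAE4-10R1}) is then just (\ref{E3-25R1X}) with $(\textbf{f},\textbf{g})=(E_1^{(m-1)},E_2^{(m-1)})$, while (\ref{E4-10R1}) is (\ref{E3-25R1}) under the same substitution.

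The main obstacle I anticipate is the treatment of $\Pi_{N_m}$ on the left-hand side: strictly speaking, $\Pi_{N_m}\Lcal[\cdot]$ fails to be invertible in the usual sense because its range is restricted to low frequencies, and a priori the right-hand side $E_1^{(m-1)}$ need not lie in that range. The resolution is the Galerkin-type restriction described above, together with the observation that only $\Pi_{N_m}E_1^{(m-1)}$ is resolved at step $m$; the missing high-frequency part $(I-\Pi_{N_m})E_1^{(m-1)}$ is absorbed, via (\ref{AH1}), into the error $\Rcal_1,\Rcal_2$ at the \emph{next} step of the Nash--Moser iteration, where Lemma 4.1 produces the quadratic gain $N_m^{2}(\|\textbf{h}^{(m)}\|_{\Ccal_1^s}^2+\|\textbf{q}^{(m)}\|_{\Ccal_1^s}^2)$ that closes the scheme. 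Once this bookkeeping is understood, (\ref{AAE4-10R1}) and (\ref{E4-10R1}) follow immediately from Proposition 3.2.
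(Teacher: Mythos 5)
Your proposal takes essentially the same route as the paper: the paper also constructs $(\textbf{h}^{(m)},\textbf{q}^{(m)})$ by feeding the frozen profile $(\textbf{w}^{(m-1)},\textbf{b}^{(m-1)})\in\Bcal_{\eps}$ and the source $(E_1^{(m-1)},E_2^{(m-1)})$ into the linear well-posedness result of Section 3 and reading the two estimates off (\ref{E3-25R1X}) and (\ref{E3-25R1}). Your additional paragraph on the smoothing operator $\Pi_{N_m}$ addresses a point the paper's own proof passes over in silence (it invokes the unsmoothed linear theory directly, relying on the fact that the discrepancy $(I-\Pi_{N_m})\Lcal[\cdot](\textbf{h}^{(m)},\textbf{q}^{(m)})$ is by definition part of the error $\Rcal_1,\Rcal_2$ in (\ref{E4-6})), so it is a refinement of the same argument rather than a different one.
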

\begin{proof}
Assume that $(\textbf{w}^{(0)},\textbf{b}^{(0)})$ satisfies (\ref{E4-5}). The $m-1$-th approximation solution is
$$
\aligned
&\textbf{w}^{(m-1)}=\textbf{w}^{(0)}+\sum_{i=1}^{m-1}\textbf{h}^{(i)},\\
&\textbf{b}^{(m-1)}=\textbf{b}^{(0)}+\sum_{i=1}^{m-1}\textbf{q}^{(i)}.
\endaligned
$$
Then we will find the $m$-th approximation solution $(\textbf{w}^{(m)},\textbf{b}^{(m)})$, which is equivalent to find $(\textbf{h}^{(m)},\textbf{q}^{(m)})$ such that
\bel{E4-7}
\aligned
&\textbf{w}^{(m)}=\textbf{w}^{(m-1)}+\textbf{h}^{(m)},\\
&\textbf{b}^{(m)}=\textbf{b}^{(m-1)}+\textbf{q}^{(m)}.
\endaligned
\ee
Substituting (\ref{E4-7}) into (\ref{E4-1}), there is
$$
\aligned
&\Lcal(\textbf{w}^{(m)},\textbf{b}^{(m)})=\Lcal(\textbf{w}^{(m-1)},\textbf{b}^{(m-1)})+\Pi_{N_m}\Lcal[(\textbf{w}^{(m-1)},\textbf{b}^{(m-1)})](\textbf{h}^{(m)},\textbf{q}^{(m)})+\Rcal_1(\textbf{h}^{(m)},\textbf{q}^{(m)}),\\
&\Jcal(\textbf{w}^{(m)},\textbf{b}^{(m)})=\Jcal(\textbf{w}^{(m-1)},\textbf{b}^{(m-1)})+\Pi_{N_m}\Jcal[(\textbf{w}^{(m-1)},\textbf{b}^{(m-1)})](\textbf{h}^{(m)},\textbf{q}^{(m)})+\Rcal_2(\textbf{h}^{(m)},\textbf{q}^{(m)}).
\endaligned
$$

Set
$$
\aligned
&\Lcal(\textbf{w}^{(m-1)},\textbf{b}^{(m-1)})+\Pi_{N_m}\Lcal[(\textbf{w}^{(m-1)},\textbf{b}^{(m-1)})](\textbf{h}^{(m)},\textbf{q}^{(m)})=0,\\
&\Jcal(\textbf{w}^{(m-1)},\textbf{b}^{(m-1)})+\Pi_{N_m}\Jcal[(\textbf{w}^{(m-1)},\textbf{b}^{(m-1)})](\textbf{h}^{(m)},\textbf{q}^{(m)})=0,
\endaligned
$$
we supplement it with the initial data
$$
\aligned
&\textbf{h}^{(m)}(0,x)=\textbf{h}^{(m)}_{0}(x):=\textbf{v}_0(x)-\overline{\textbf{v}}(0,x)-\sum_{i=1}^{m-1}\textbf{h}^{(i)}(0,x),\\
&\textbf{q}^{(m)}(0,x)=\textbf{q}^{(m)}_{0}(x):=\textbf{H}_0(x)-\overline{\textbf{H}}(0,x)-\sum_{i=1}^{m-1}\textbf{q}^{(i)}(0,x),
\endaligned
$$
and boundary conditions
$$
\textbf{h}^{(m)}(t,x)|_{x\in\del\Omega_t}=0,\quad \textbf{q}^{(m)}(t,x)|_{x\in\del\Omega_t}=0.
$$

By Proposition 3.1, above problem admits a solution $(\textbf{h}^{(m)},\textbf{q}^{(m)})\in H^s(\Omega_t)\times H^s(\Omega_t)$ with $\nabla\cdot\textbf{h}^{(m)}=0$ and $\nabla\cdot\textbf{q}^{(m)}=0$. Furthermore, by (\ref{E3-25R1X})-(\ref{E3-25R1}), it satisfies
$$
\|\textbf{h}^{(m)}\|^2_{H^s}+\|\textbf{q}^{(m)}\|^2_{H^s}\lesssim (\overline{T}^*-t)^{C_{\eps,a,\bar{a},k,\nu,\mu}}\Big(\|\textbf{h}_0^{(m)}\|_{H^s}^2+\|\textbf{q}_0^{(m)}\|_{H^s}^2+ \|E_1^{(m-1)}\|^2_{H^s}+ \|E_2^{(m-1)}\|^2_{H^s}\Big),
$$
and
$$
\|\textbf{h}^{(m)}\|^2_{\Ccal_1^s}+\|\textbf{q}^{(m)}\|^2_{\Ccal_1^s}\lesssim\|\textbf{h}_0^{(m)}\|_{\Ccal_1^s}^2+\|\textbf{q}_0^{(m)}\|_{\Ccal_1^s}^2+ \|E_1^{(m-1)}\|^2_{\Ccal_1^s}+ \|E_2^{(m-1)}\|^2_{\Ccal_1^s},
$$
where one can see the $m-1$-th error term $E^{(m-1)}$ such that
$$
\aligned
&E_1^{(m-1)}:=\Lcal(\textbf{w}^{(m-1)},\textbf{b}^{(m-1)})=\Rcal_1(\textbf{h}^{(m)},\textbf{q}^{(m)}),\\
&E_2^{(m-1)}:=\Jcal(\textbf{w}^{(m-1)},\textbf{b}^{(m-1)})=\Rcal_2(\textbf{h}^{(m)},\textbf{q}^{(m)}).
\endaligned
$$
\end{proof}

\subsection{Convergence of the approximation scheme}

For any fixed integer $s\geq2$, let $1<\bar{k}<k_0\leq k\leq s$ and
$$
\aligned
&k_m:=\bar{k}+\frac{k-\bar{k}}{2^m},\\
&\alpha_{m+1}:=k_m-k_{m+1}=\frac{k-\bar{k}}{2^{m+1}},
\endaligned
$$
which gives that
\bel{EX1-1}
k_0>k_1>\ldots>k_m>k_{m+1}>\ldots.
\ee

\begin{proposition}
Let viscosity constant $\nu$ and resistivity constant $\mu$ be sufficient big, constants $a\in(0,{1\over2}]$, $\bar{a},k\in(0,1]$, a fixed integer $s\geq2$, $0<k_0\leq s$ and $0<\eps\ll1$. 
The nonlinear equations
\bel{Ex1-1}
\aligned
&\textbf{w}_t+\textbf{w}\cdot\nabla\overline{\textbf{v}}_{\overline{T}^*}+\overline{\textbf{v}}_{\overline{T}^*}\cdot\nabla\textbf{w}+\textbf{w}\cdot\nabla\textbf{w}=\nabla p+\nu\triangle\textbf{w}
+\overline{\textbf{H}}_{\overline{T}^*}\cdot\nabla\textbf{b}+\textbf{b}\cdot\nabla\overline{\textbf{H}}_{\overline{T}^*}\\
&\quad\quad\quad\quad\quad\quad\quad\quad\quad\quad\quad\quad\quad\quad\quad\quad-\nabla(\overline{\textbf{H}}_{\overline{T}^*}\cdot\textbf{b})
+\textbf{b}\cdot\nabla\textbf{b}-\nabla({|\textbf{b}|^2\over 2}),\\
&\textbf{b}_t-\mu\triangle\textbf{b}=\overline{\textbf{H}}_{\overline{T}^*}\cdot\nabla\textbf{w}+\textbf{b}\cdot\nabla\overline{\textbf{v}}_{\overline{T}^*}
-\overline{\textbf{v}}_{\overline{T}^*}\cdot\nabla\textbf{b}-\textbf{w}\cdot\nabla\overline{\textbf{H}}_{\overline{T}^*}
+\textbf{b}\cdot\nabla\textbf{w}-\textbf{w}\cdot\nabla\textbf{b},\\
&\nabla\cdot\textbf{w}=0,\quad \nabla\cdot\textbf{b}=0,
\endaligned
\ee
with small initial data
$$
\textbf{w}(0,x)=\textbf{w}_0(x),\quad \textbf{b}(0,x)=\textbf{b}_0(x),
$$
and boundary conditions
$$
\textbf{w}(t,x)|_{x\in\del\Omega_t}=0,\quad \textbf{b}(t,x)|_{x\in\del\Omega_t}=0,
$$
admits a local solution
$$
\aligned
\textbf{w}^{(\infty)}(t,x)&=\textbf{w}^{(0)}(t,x)+\sum_{m=1}^{\infty}\textbf{h}^{(m)}(t,x)+[{1\over\overline{T}^*}(\overline{T}^*-t)]^{C_{\eps,a,\bar{a},k,\nu,\mu}}\textbf{w}_0(x),\\
\textbf{b}^{(\infty)}(t,x)&=\textbf{b}^{(0)}(t,x)+\sum_{m=1}^{\infty}\textbf{q}^{(m)}(t,x)+[{1\over\overline{T}^*}(\overline{T}^*-t)]^{C_{\eps,a,\bar{a},k,\nu,\mu}}\textbf{b}_0(x),
\endaligned
$$
where $ (t,x)\in(0,\overline{T}^*)\times\Omega_t$, 
$\sum\limits_{m=1}^{\infty}\textbf{h}^{(m)}(t,x)\in \Ccal_1^{k_0}(\Omega_t)$, $\sum\limits_{m=1}^{\infty}\textbf{q}^{(m)}(t,x)\in \Ccal_1^{k_0}(\Omega_t)$,
and $C_{\eps,a,\bar{a},k,\nu,\mu}$ is a positive constant depending on constants $\eps,a,\bar{a},k,\nu,\mu$.

Moreover, it holds
$$
\aligned
&\|\textbf{w}^{(\infty)}\|_{H^s(\Omega_t)}\lesssim (\overline{T}^*-t)^{C_{\eps,a,\bar{a},k,\nu,\mu}},\\
&\|\textbf{b}^{(\infty)}\|_{H^s(\Omega_t)}\lesssim (\overline{T}^*-t)^{C_{\eps,a,\bar{a},k,\nu,\mu}}.
\endaligned
$$

\end{proposition}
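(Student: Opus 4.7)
The plan is to iterate Lemma 4.2 to construct the sequence $(\textbf{h}^{(m)},\textbf{q}^{(m)})$, and then to prove that the partial sums $\sum_{i=1}^m \textbf{h}^{(i)}$ and $\sum_{i=1}^m \textbf{q}^{(i)}$ converge in the decreasing Sobolev scale $\Ccal_1^{k_m}$. By (\ref{E4-10R2}) the new error at step $m$ is exactly the quadratic remainder, $E_i^{(m)} = \Rcal_i(\textbf{h}^{(m)},\textbf{q}^{(m)})$, so the whole argument reduces to showing that the error sequence
\[
\mu_m := \|E_1^{(m)}\|_{\Ccal_1^{k_m}} + \|E_2^{(m)}\|_{\Ccal_1^{k_m}}
\]
decays super-geometrically; this is the standard Nash--Moser mechanism. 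First I would combine the linear estimate (\ref{E4-10R1}) at level $k_m$ with the tame estimate (\ref{E4-9R1}) at level $k_{m+1}$, using the smoothing bound (\ref{AH2}) to pay for the regularity gap $\alpha_{m+1}=(k-\bar k)2^{-(m+1)}$. A routine induction, after absorbing the initial-data pieces $\|\textbf{h}_0^{(m)}\|_{\Ccal_1^{k_{m-1}}} + \|\textbf{q}_0^{(m)}\|_{\Ccal_1^{k_{m-1}}}$ into the base smallness $\eps_0$ of (\ref{E4-5}), yields a recursion of the form $\mu_{m+1} \lesssim N_{m}^{2}\mu_m^{2}$. With $N_m = 2^m$ and $\mu_0 \lesssim \eps_0 \ll 1$, this recursion produces $\mu_m \le (C\eps_0)^{2^m}$, which is summable.

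Feeding this decay back into (\ref{E4-10R1}) gives $\|\textbf{h}^{(m)}\|_{\Ccal_1^{k_m}} + \|\textbf{q}^{(m)}\|_{\Ccal_1^{k_m}} \lesssim \mu_{m-1}$, so the series $\sum_m \textbf{h}^{(m)}$ and $\sum_m \textbf{q}^{(m)}$ converge in $\Ccal_1^{\bar k}$, producing limits $(\textbf{w}^{(\infty)},\textbf{b}^{(\infty)})\in \Ccal_1^{k_0}$. The sharper $H^s$-estimate (\ref{AAE4-10R1}) contributes a common prefactor $(\overline{T}^*-t)^{C_{\eps,a,\bar a,k,\nu,\mu}}$ with $m$-independent exponent, which can be pulled out of the series to yield the claimed blowup-rate decay of $\textbf{w}^{(\infty)}$ and $\textbf{b}^{(\infty)}$ in $H^s(\Omega_t)$. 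To pass to the limit in the approximation scheme (\ref{E4-1}) I would use $\Pi_{N_m}u \to u$ from (\ref{AH1}) together with the continuity of the quadratic nonlinearities on $\Ccal_1^{k_0}$ (valid since $k_0 > 2$ gives $H^{k_0}(\Omega_t) \hookrightarrow L^{\infty}$), concluding that $(\textbf{w}^{(\infty)},\textbf{b}^{(\infty)})$ solves the original nonlinear system (\ref{Ex1-1}). The extra term $[(\overline{T}^*-t)/\overline{T}^*]^{C_{\eps,a,\bar a,k,\nu,\mu}}\textbf{w}_0(x)$ appearing in the statement absorbs the mismatch between the prescribed initial data $(\textbf{w}_0,\textbf{b}_0)$ and the base approximation $(\textbf{w}^{(0)},\textbf{b}^{(0)})$ of (\ref{E4-5}); by construction it vanishes at $t=\overline{T}^*$ at the required rate, and analogously for $\textbf{b}_0$.

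The main obstacle will be calibrating the Nash--Moser balance: one must verify that the polynomial loss $N_m^{2}$ from the tame estimate (\ref{E4-9R1}), combined with the regularity contraction rate $\alpha_{m+1}$ from (\ref{EX1-1}), still leaves the quadratic self-improvement $\mu_{m+1}\lesssim \mu_m^{2}$ able to beat the polynomial factor. Concretely, the recursion must close under a hypothesis of the form $\mu_m \le N_m^{-\gamma}$ with $\gamma$ sufficiently large, which in turn forces a lower bound on $k_0-\bar k$ and the smallness $\eps_0\ll 1$; these are precisely the thresholds built into (\ref{E4-5}). A secondary bookkeeping point is that the $(\overline{T}^*-t)^{C}$ factors inherited from Proposition 3.2 have an $m$-independent exponent $C_{\eps,a,\bar a,k,\nu,\mu}$, so they behave as a uniform prefactor and do not disturb the convergence argument. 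Once these quantitative choices are made, the verification that $(\textbf{w}^{(\infty)},\textbf{b}^{(\infty)})$ solves (\ref{Ex1-1}) with the announced decay is classical.
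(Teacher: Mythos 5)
Your proposal follows essentially the same route as the paper: a Nash--Moser induction on the decreasing scale $k_m$, the quadratic-times-polynomial recursion $\mu_{m+1}\lesssim N_m^2\mu_m^2$ from combining the linear estimate (\ref{E4-10R1}) with the tame bound (\ref{E4-9R1}), doubly-exponential decay $\mu_m\le(C\eps_0)^{2^m}$ forced by the smallness hypotheses in (\ref{E4-5}), reduction of the small-data case to the zero-data case via the auxiliary shift $[{1\over\overline{T}^*}(\overline{T}^*-t)]^{C_{\eps,a,\bar a,k,\nu,\mu}}\textbf{w}_0$, and the $H^s$ decay read off from (\ref{AAE4-10R1}). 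The only points where you go slightly beyond the paper are the explicit mention of passing to the limit in the truncated scheme via $\Pi_{N_m}\to\mathrm{Id}$, while the paper additionally records the continuity of the construction in $\overline{T}^*$ to justify the blowup-time shift; neither changes the substance of the argument.
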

\begin{proof}

The proof is based on the induction.
For convenience, we first deal with the case of zero initial data, i.e. $\textbf{w}(0,x)=0$ and $\textbf{b}(0,x)=0$. After that, we discuss the small initial data case.
Note that $N_m=N_0^m$ with $N_0>1$. $\forall m=1,2,\ldots$, we claim that there exists a sufficient small positive constant $\eps$ such that
\bel{E4-12}
\aligned
&\|\textbf{h}^{(m)}\|_{\Ccal_1^{k_m}}+\|\textbf{q}^{(m)}\|_{\Ccal_1^{k_m}}<\eps^{2^m},\\
&\|E^{(m-1)}_1\|_{\Ccal_1^{k_m}}<\eps^{2^{m+1}},\quad \|E^{(m-1)}_2\|_{\Ccal_1^{k_m}}<\eps^{2^{m+1}},\\
&(\textbf{w}^{(m)},\textbf{b}^{(m)})\in\Bcal_{\eps}.
\endaligned
\ee

For the case of $m=1$, we recall that the assumption (\ref{E4-5}) on $(\textbf{w}^{(0)},\textbf{b}^{(0)})$, i.e.
$$
\aligned
&\textbf{w}^{(0)}\neq(0,0,0)^T,\quad \textbf{b}^{(0)}\neq(0,0,0)^T,\\
&\nabla\cdot\textbf{w}^{(0)}=0,\quad \nabla\cdot\textbf{b}^{(0)}=0,\\
&\|\textbf{w}^{(0)}\|_{H^{s}(\Omega_t)}\lesssim \eps_0(\overline{T}^*-t)^{C_{\eps,a,\bar{a},k,\nu,\mu}},\quad \|\textbf{b}^{(0)}\|_{H^s(\Omega_t)}\lesssim\eps_0(\overline{T}^*-t)^{C_{\eps,a,\bar{a},k,\nu,\mu}},\\
&\|\textbf{w}^{(0)}\|_{\Ccal_1^{k_0+3}}\lesssim \eps_0<\eps,\quad \|\textbf{b}^{(0)}\|_{\Ccal_1^{k_0+3}}\lesssim \eps_0<\eps,\\
&\|E_1^{(0)}\|_{\Ccal_1^{k_0+3}}\lesssim\eps_0<{\eps\over 2},\quad \|E_2^{(0)}\|_{\Ccal_1^{k_0+3}}\lesssim\eps_0<{\eps\over 2}.
\endaligned
$$

Note that $\textbf{h}^{(m)}(0,x)=0$ and $\textbf{q}^{(m)}(0,x)=0$.
By (\ref{E4-10R1}), let $0<\eps_0<N_0^{-8}\eps^2<{\eps\over2}\ll1$, we have
$$
\|\textbf{h}^{(1)}\|_{\Ccal_1^{k_1}}+\|\textbf{q}^{(1)}\|_{\Ccal_1^{k_1}}\lesssim \|E_1^{(0)}\|_{\Ccal_1^{k_0}}+\|E_2^{(0)}\|_{\Ccal_1^{k_0}}\lesssim 2\eps_0<\eps.
$$
Moreover, by (\ref{E4-9R1}) and (\ref{E4-10R2}), we derive
$$
\aligned
&\|E_1^{(1)}\|_{\Ccal_1^{k_1}}\lesssim\|\Rcal_1(\textbf{h}^{1)},\textbf{q}^{(1)})\|_{\Ccal_1^{k_1}}\lesssim N_1^2\Big(\|\textbf{h}^{(1)}\|^2_{\Ccal_1^{k_1}}+\|\textbf{q}^{(1)}\|^2_{\Ccal_1^{k_1}}\Big)\lesssim 2\eps_0 N_1^2<\eps^2,\\
&\|E_2^{(1)}\|_{\Ccal_1^{k_1}}\lesssim\|\Rcal_2(\textbf{h}^{1)},\textbf{q}^{(1)})\|_{\Ccal_1^{k_1}}\lesssim N_1^2\Big(\|\textbf{h}^{(1)}\|^2_{\Ccal_1^{k_1}}+\|\textbf{q}^{(1)}\|^2_{\Ccal_1^{k_1}}\Big)\lesssim 2\eps_0 N_1^2<\eps^2,
\endaligned
$$
and
$$
\aligned
\|\textbf{w}^{(1)}\|_{\Ccal_1^{k_1+3}}+\|\textbf{b}^{(1)}\|_{\Ccal_1^{k_1+3}}&\lesssim\|\textbf{w}^{(0)}\|_{\Ccal_1^{k_1+3}}+\|\textbf{b}^{(0)}\|_{\Ccal_1^{k_1+3}}+\|\textbf{h}^{(1)}\|_{\Ccal_1^{k_1+3}}+\|\textbf{q}^{(1)}\|_{\Ccal_1^{k_1+3}}\\
&\lesssim\|\textbf{w}^{(0)}\|_{\Ccal_1^{k_0+3}}+\|\textbf{b}^{(0)}\|_{\Ccal_1^{k_0+3}}+\|E_1^{(0)}\|_{\Ccal_1^{k_0}}+\|E_2^{(0)}\|_{\Ccal_1^{k_0}}\\
&\lesssim \eps,
\endaligned
$$
which means that $(\textbf{w}^{(1)},\textbf{b}^{(1)})\in\Bcal_{\eps}$.

Assume that the case of $m-1$ holds, i.e.
\bel{E4-13}
\aligned
&\|\textbf{h}^{(m-1)}\|_{\Ccal_1^{k_{m-1}}}+ \|\textbf{q}^{(m-1)}\|_{\Ccal_1^{k_{m-1}}}<\eps^{2^{m-1}},\\
&\|E_1^{(m-1)}\|_{\Ccal_1^{k_{m-1}}}<\eps^{2^{m}},\quad \|E_2^{(m-1)}\|_{\Ccal_1^{k_{m-1}}}<\eps^{2^{m}},\\
&(\textbf{w}^{(m-1)},\textbf{b}^{(m-1)})\in\Bcal_{\eps},
\endaligned
\ee
then we prove the case of $m$ holds. Using (\ref{E4-10R1}) and (\ref{E4-13}), we have
\bel{E4-14R1}
\aligned
\|\textbf{h}^{(m)}\|_{\Ccal_1^{k_m}}+\|\textbf{q}^{(m)}\|_{\Ccal_1^{k_m}}&\lesssim \|E_1^{(m-1)}\|_{\Ccal_1^{k_{m}}}+\|E_2^{(m-1)}\|_{\Ccal_1^{k_{m}}}\\
&< \|E_1^{(m-1)}\|_{\Ccal_1^{k_{m-1}}}+\|E_2^{(m-1)}\|_{\Ccal_1^{k_{m-1}}}\\
&<\eps^{2^{m}},
\endaligned
\ee
which combining with (\ref{E4-9R1}), (\ref{E4-10R2}) and (\ref{EX1-1}), it holds
\bel{E4-14}
\aligned
\|E_1^{(m)}\|_{\Ccal_1^{k_{m}}}+\|E_2^{(m)}\|_{\Ccal_1^{k_{m}}}&=\|\Rcal_1(\textbf{h}^{m)},\textbf{q}^{(m)})\|_{\Ccal_1^{k_{m}}}+\|\Rcal_2(\textbf{h}^{m)},\textbf{q}^{(m)})\|_{\Ccal_1^{k_{m}}}\\
&\lesssim N_{m-1}^2\Big(\|E_1^{(m-1)}\|^2_{\Ccal_1^{k_{m-1}}}+|E_2^{(m-1)}\|^2_{\Ccal_1^{k_{m-1}}}\Big)\\
&\lesssim N_{m-1}^2\Big(\|E_1^{(m-1)}\|_{\Ccal_1^{k_{m-1}}}+|E_2^{(m-1)}\|_{\Ccal_1^{k_{m-1}}}\Big)^2\\
&\lesssim N_0^{2(m-1)+4(m-2)}\Big(\|E_1^{(m-2)}\|_{\Ccal_1^{k_{m-2}}}+\|E_2^{(m-2)}\|_{\Ccal_1^{k_{m-2}}}\Big)^{2^2}\\
&\lesssim \ldots,\\
&\lesssim \Big[N_0^4\Big(\|E^{(0)}_1\|_{\Ccal_1^{k_0}}+\|E^{(0)}_2\|_{\Ccal_1^{k_0}}\Big)\Big]^{2^m}.
\endaligned
\ee

So by (\ref{E4-5}),  there is a sufficient small positive constant $\eps_0$ such that
$$
0<N_0^4\Big(\|E^{(0)}_1\|_{\Ccal_1^{k_0}}+\|E^{(0)}_2\|_{\Ccal_1^{k_0}}\Big)<2N_0^4\eps_0<\eps^2,
$$
which combining with (\ref{E4-14}) gives that
$$
\|E_1^{(m)}\|_{\Ccal_1^{k_{m}}}+\|E_2^{(m)}\|_{\Ccal_1^{k_{m}}}<\eps^{2^{m+1}}.
$$
On the other hand, note that $N_m=N_0^m$, by  (\ref{AH2}) and (\ref{E4-14R1})-(\ref{E4-13}), it holds
$$
\aligned
\|\textbf{w}^{(m)}\|_{\Ccal_1^{k_{m}}+3}+\|\textbf{b}^{(m)}\|_{\Ccal_1^{k_{m}}+3}&\lesssim \|\textbf{w}^{(m-1)}\|_{\Ccal_1^{k_{m-1}+3}}+\|\textbf{b}^{(m-1)}\|_{\Ccal_1^{k_{m-1}+3}}+\|\textbf{h}^{(m)}\|_{\Ccal_1^{k_{m}+3}}+\|\textbf{q}^{(m)}\|_{\Ccal_1^{k_{m}+3}}\\
&\lesssim \eps+N_m^3\eps^{2^m}\lesssim\eps.
\endaligned
$$
This means that $(\textbf{w}^{(m)},\textbf{b}^{(m)})\in\Bcal_{\eps}$. Hence we conclude that (\ref{E4-12}) holds.

Furthermore, it follows from (\ref{E4-12}) that the error term goes to $0$ as $m\rightarrow\infty$, i.e.
$$
\lim_{m\rightarrow\infty}\Big(\|E_1^{(m)}\|_{\Ccal_1^{k_m}}+\|E_2^{(m)}\|_{\Ccal_1^{k_m}}\Big)=0.
$$

Therefore, equations (\ref{Ex1-1}) with the zero initial data $\textbf{w}(0,x)=0$ and $\textbf{b}(0,x)=0$, and boundary condition $\textbf{w}(t,x)|_{x\in\del\Omega_t}=0$ and $\textbf{b}(t,x)|_{x\in\del\Omega_t}=0$ admits a solution
$$
\aligned
&\textbf{w}^{(\infty)}=\textbf{w}^{(0)}+\sum_{m=1}^{\infty}\textbf{h}^{(m)}\in \Ccal_1^{k_0}(\Omega_t),\\
&\textbf{b}^{(\infty)}=\textbf{b}^{(0)}+\sum_{m=1}^{\infty}\textbf{q}^{(m)}\in \Ccal_1^{k_0}(\Omega_t).
\endaligned
$$

Next we discuss the case of small initial data
$$
\textbf{w}(0,x)=\textbf{w}_0(x),\quad \textbf{b}(0,x)=\textbf{b}_0(x).
$$
where 
$$
\|\textbf{w}_0(x)\|_{H^s(\Omega_t)}<\eps,\quad \|\textbf{b}_0(x)\|_{H^s(\Omega_t)}<\eps.
$$

We introduce an auxiliary function
$$
\aligned
&\overline{\textbf{w}}(t,x)=\textbf{w}(t,x)-[{1\over\overline{T}^*}(\overline{T}^*-t)]^{C_{\eps,a,\bar{a},k,\nu,\mu}} \textbf{w}_0(x),\\
&\overline{\textbf{b}}(t,x)=\textbf{b}(t,x)-[{1\over\overline{T}^*}(\overline{T}^*-t)]^{C_{\eps,a,\bar{a},k,\nu,\mu}}  \textbf{b}_0(x),
\endaligned
$$
then small initial data is reduced into
$$
\overline{\textbf{w}}(0,x)=0,\quad \overline{\textbf{b}}(0,x)=0,
$$
and equations (\ref{Ex1-1}) is transformed into equations of $(\overline{\textbf{w}}(t,x),\overline{\textbf{b}}(t,x))$. Since $\eps$ is sufficient small and $(\textbf{w}_0(x),\textbf{b}_0(x))\in H^s(\Omega_t)\times H^s(\Omega_t)$,
we can follow above iteration scheme to obtain the local existence of $(\overline{\textbf{w}}(t,x),\overline{\textbf{b}}(t,x))$ for $(t,x)\in(0,\overline{T}^*)\times\Omega_t$.
Furthermore, the local solution of equations (\ref{Ex1-1}) with small initial data takes the form 
$$\Big(\overline{\textbf{w}}(t,x)+[{1\over\overline{T}^*}(\overline{T}^*-t)]^{C_{\eps,a,\bar{a},k,\nu,\mu}} \textbf{w}_0(x),\overline{\textbf{b}}(t,x)+[{1\over\overline{T}^*}(\overline{T}^*-t)]^{C_{\eps,a,\bar{a},k,\nu,\mu}}\textbf{b}_0(x)\Big)$$.

Moreover, we can choose the initial approximation function $(\textbf{w}_{\overline{T}^*}^{(0)}(t,x),\textbf{b}_{\overline{T}^*}^{(0)}(t,x))^T$ depending on the parameter $\overline{T}^*$ continuity. Since the initial data depends on 
 the parameter $T^*$ continuity when we solve the linearized system at each iteration step, so 
$(\textbf{h}^{(m)}_{T^*}(t,x),\textbf{q}^{(m)}_{T^*}(t,x))^T$ also depends on the parameter $T^*$ continuity.
By the exact form of solutions which we constructed, it holds
$$
\aligned
\textbf{w}^{(m+1)}(0,x)&=\textbf{w}^{(0)}_{\overline{T}^*}(0,x)+\sum_{i=1}^{m}\textbf{h}_{T^*}^{(i)}(0,x)+[{1\over\overline{T}^*}(\overline{T}^*-t)]^{C_{\eps,a,\bar{a},k,\nu,\mu}}\textbf{w}_0(x),\\
&=\mathcal{R}_{T^*,\overline{T}^*}(0,x)+[{1\over\overline{T}^*}(\overline{T}^*-t)]^{C_{\eps,a,\bar{a},k,\nu,\mu}}\textbf{w}_0(x),
\endaligned
$$
and
$$
\aligned
\textbf{b}^{(m+1)}(0,x)&=\textbf{b}^{(0)}_{\overline{T}^*}(0,x)+\sum_{i=1}^{m}\textbf{q}_{T^*}^{(i)}(0,x)+[{1\over\overline{T}^*}(\overline{T}^*-t)]^{C_{\eps,a,\bar{a},k,\nu,\mu}}\textbf{b}_0(x),\\
&=\overline{\mathcal{R}}_{T^*,\overline{T}^*}(0,x)+[{1\over\overline{T}^*}(\overline{T}^*-t)]^{C_{\eps,a,\bar{a},k,\nu,\mu}}\textbf{b}_0(x),
\endaligned
$$

then there exists a $\overline{T}^*\in[T^*-\delta,T^*+\delta]$ with $0<\delta\ll1$ such that
$$
\aligned
&\|\mathcal{R}_{T^*,\overline{T}^*}(0,x)\|_{\HH^s(\Omega_t)}=\mathcal{O}(\eps),\\
&\|\overline{\mathcal{R}}_{T^*,\overline{T}^*}(0,x)\|_{\HH^s(\Omega_t)}=\mathcal{O}(\eps).
\endaligned
$$
Thus it holds
$$
\aligned
&\textbf{w}^{(m+1)}(0,x)=[{1\over\overline{T}^*}(\overline{T}^*-t)]^{C_{\eps,a,\bar{a},k,\nu,\mu}}\textbf{w}_0(x)+\mathcal{O}(\eps),\\
&\textbf{b}^{(m+1)}(0,x)=[{1\over\overline{T}^*}(\overline{T}^*-t)]^{C_{\eps,a,\bar{a},k,\nu,\mu}}\textbf{b}_0(x)+\mathcal{O}(\eps).
\endaligned
$$

At last, we recall the time-decay of each of approximation step given in (\ref{AAE4-10R1}), so we obtain
$$
\aligned
&\|\textbf{w}^{(\infty)}\|_{H^s(\Omega_t)}\lesssim (\overline{T}^*-t)^{C_{\eps,a,\bar{a},k,\nu,\mu}},\\
&\|\textbf{b}^{(\infty)}\|_{H^s(\Omega_t)}\lesssim (\overline{T}^*-t)^{C_{\eps,a,\bar{a},k,\nu,\mu}}.
\endaligned
$$

This completes the proof.

\end{proof}


\textbf{Acknowledgments.}
The author expresses his sincerely thanks to the BICMR of Peking University and Professor Gang Tian for constant support and encouragement,
The author expresses his sincerely thanks to Prof. J.L. Liu for his useful discussion and suggestion.
This work is supported by NSFC No 11771359.

\end{document}